\documentclass[12pt,oneside]{amsart}
\usepackage{amsmath, amscd}
\usepackage{hyperref}
\usepackage{graphicx}
\usepackage[english]{babel}

\usepackage{amsthm}

\newtheorem{thm}{Theorem}[section]
% most others are numbered together with theorems
\newtheorem{cor}[thm]{Corollary}
\newtheorem{lem}[thm]{Lemma}
\newtheorem{prop}[thm]{Proposition}
\newtheorem{exmp}[thm]{Example}
\newtheorem{claim}[thm]{Claim}
%%
% just an example of what will happen if you skip the [thm]
% part -- conjectures will be numbered consecutively

\theoremstyle{definition}
\newtheorem{defn}{Definition}[section]
\theoremstyle{remark}
\newtheorem{rem}{Remark}[section]

  % to make the nota\Dtion environment unnumbered

   % The preamble is also a good place to define new commands and macros.
   % This part of the preamble is strictly optional according to your taste.

      \newcommand{\R}{{\mathbb{R}}}
      \newcommand{\Z}{{\mathbb Z}}

      \newcommand{\D}{\mathrm{Diff}}
      \newcommand{\mcg}{\mathcal{M}^{\infty}}
       \newcommand{\pmcg}{\mathcal{PM}^{\infty}}
       \newcommand{\mcgi}{\mathcal{M}_0^{\infty}}
       
      \newcommand{\mcgt}{\mathcal{M}^{0}}
      \newcommand{\pmcgt}{\mathcal{PM}^{0}}      
      \newcommand{\homeo}{\text{Homeo}}
      
        \newcommand{\s}{\mathbb{S}}
       
      \newcommand{\e}{\epsilon}
      
           \newcommand{\Id}{\text{Id}}
           
               \newcommand{\atr}{\text{Att}}
          \newcommand{\rep}{\text{Rep}}
          \newcommand{\per}{\text{Per}_0}

   % The following mysterious maneuver gets rid of AMS junk at the top
   % and bottom of the first page.
   
      \makeatletter
      \def\@setcopyright{}
      \def\serieslogo@{}
      \makeatother
   
\begin{document}

  \author[Sebastian Hurtado]{Sebastian Hurtado, Emmanuel Militon}
   %\address{University of California, Berkeley, USA}
   %\email{shurtado@math.berkeley.edu}

   \title[Distortion and Tits alternative in $\mcg(S,K)$]{Distortion and Tits alternative in smooth mapping class groups}

   \date{\today}

\begin{abstract}
In this article, we study the smooth mapping class group of a surface $S$ relative to a given Cantor set, that is the group of isotopy classes of orientation-preserving smooth diffeomorphisms of $S$ which preserve this Cantor set. When the Cantor set is the standard ternary Cantor set, we prove that this group does not contain any distorted element. Moreover, we prove a weak Tits alternative for these groups.
\end{abstract}

\maketitle

\begin{section}{Introduction}

\begin{defn} Let $S$ be a surface of finite type and let $K$ be a closed subset contained in $S$. Let $\D(S,K)$ be the group of orientation-preserving $C^{\infty}$-diffeomorphisms of $S$ that leave $K$ invariant (\emph{i.e.} $f(K) = K$) and let $\D_0(S,K)$ be the identity component of $\D(S,K)$. We define  the ``smooth" mapping class group $\mcg(S,K)$  of $S$ relative to $K$ as the quotient: $$\mcg(S,K) = \D(S,K)/\D_0(S,K)$$

\end{defn}

If $K$ is a finite set of points, $\mcg(S,K)$ coincides with the braid group with $|K|$ points in $S$.\\

The groups  $\mcg(S,K)$ appear very naturally when studying group actions on surfaces, as given a smooth group action on $S$ preserves a non-trivial closed set $K$, one obtains a group homomorphism into $\mcg(S,K)$. These groups were first studied by Funar and Neretin in \cite{FN}. The aim of this paper is to contribute to the study of $\mcg(S,K)$ for a Cantor set $K \subset S$. Our results are aiming towards the understanding of two basic properties of these groups, namely, the distortion of the cyclic subgroups and the Tits alternative.\\

A recent result related to mapping class groups of infinite type that deserves to be mentioned (but which we will not make use of here) is J.Bavard's proof that the mapping class group of $\R^2$ relative to a Cantor set $K$ acts faithfully on a Gromov hyperbolic space. This hyperbolic space is similar to the curve complex for finite mapping class groups and  was suggested by Calegari (see \cite{Ba} and \cite{Ca}). Possible lines of research and developments after Bavard's article are suggested by Calegari in his blog (see \cite{Ca}). These developments partly inspired this work.\\

We now introduce some notation in order to state our results and explain the ideas involved in the proofs. Denote by $\pmcg(S, K) \subset \mcg(S, K)$ (the ``pure'' smooth mapping class group of $K$ in $S$) the subgroup of the mapping class group $\mcg(S,K)$ consisting of the elements which fix $K$ pointwise.\\

We also define the group $\mathfrak{diff}_{S}(K)$ as the group of homeomorphisms of $K$ which are induced by orientation-preserving diffeomorphisms of $S$ preserving $K$. In other words, a homeomorphism $f : K \to K$ belongs to $\mathfrak{diff}_{S}(K)$ if there exists $\bar{f} \in \D(S)$ such that $\bar{f}|_{K} = f$. There is a natural exact sequence of groups given by: 

\begin{equation}\label{exseq}
\pmcg(S, K) \to  \mcg(S, K) \to \mathfrak{diff}_{S}(K).
\end{equation} 

The exact sequence~\eqref{exseq}  was studied by Funar and Neretin in \cite{FN}, where it is proven that  $\pmcg(S,K)$  is always a countable group (this follows from Lemma \ref{handel}) and where for certain ``affine'' Cantor sets $K$, the group $\mathfrak{diff}_{S}(K)$ is shown to be countable and to have a ``Thompson group" kind of structure.\\

We can now proceed to state our results. We begin with our results about distortion in $\mcg(S,K)$.

\begin{subsection}{Distortion} 

We recall the concept of distortion which comes from geometric group theory and is due to Gromov (see \cite{Gro}).\\

\begin{defn} Let $G$ be a group and let $\mathcal{G} \subset G$ be a finite set which is symmetric (\emph{i.e.} $\mathcal{G}= \mathcal{G}^{-1}$). For any element $f \in G$ contained in the group generated by $\mathcal{G}$, the word length $l_{\mathcal{G}}(f)$ of $f$ is defined as the minimal integer $n$ such that $f$ can be expressed as a product $$f = s_1s_2...s_n$$ where each $s_i \in \mathcal{G}$. An element $f$ of a group $G$ is called distorted if it has infinite order and there exists a finite symmetric set $\mathcal{G} \subset G$ such that
\begin{enumerate}
\item $f \in \langle \mathcal{G} \rangle$.
\item $\underset{n \to \infty}{\lim} \frac{l_{\mathcal{G}}(f^n)}{n} = 0$.
\end{enumerate}
\end{defn}

By a theorem of Farb, Lubotzky and Minsky, the mapping class group of a surface of finite type does not have distorted elements (see \cite{FLM}). We believe that such a result should extend to the groups $\mcg(S,K)$ for every Cantor set $K$.\\ 

Our first result is the following. Recall that a Cantor set is a nonempty totally disconnected topological space such that any point of $K$ is an accumulation point.

\begin{thm}\label{m1} Let $S$ be a closed surface and $K$ be a closed subset of $S$ which is a Cantor set. The elements of $\pmcg(S, K)$ are undistorted in $\mcg(S,K)$.
\end{thm}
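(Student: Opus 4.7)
My strategy is to reduce distortion in $\mcg(S,K)$ to the finite-type theorem of Farb, Lubotzky and Minsky \cite{FLM}, which asserts that pure mapping class groups of finite-type surfaces have no distorted elements.

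I would begin by proving that if $f \in \pmcg(S,K)$ is nontrivial, then there exists a finite subset $F_0 \subset K$ such that the image of $f$ under the natural ``forgetful'' homomorphism $\rho_{F_0}\colon \pmcg(S,K) \to \pmcg(S,F_0)$ has infinite order. The existence of such $F_0$ should follow from the injectivity of the canonical map $\pmcg(S,K) \to \varprojlim_{F} \pmcg(S,F)$ (over finite $F\subset K$), which ought to be a consequence of Lemma \ref{handel} or of a direct Alexander-method argument showing that a pure mapping class trivial on every finite subset is itself trivial. By \cite{FLM}, the image $\rho_{F_0}(f)$ is then undistorted in $\mcg(S,F_0)$; equivalently, it has positive translation length on the Teichm\"uller space $\mathcal{T}(S \setminus F_0)$.

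Next I would fix a finite symmetric generating set $\mathcal{G} \subset \mcg(S,K)$ with $f \in \mathcal{G}$ and analyze a geodesic expression $f^n = s_1 \cdots s_N$ with $N = l_{\mathcal{G}}(f^n)$. Writing $g_i = s_1 \cdots s_i$ and letting $\pi\colon \mcg(S,K) \to \mathfrak{diff}_S(K)$ be the natural projection, the images $\pi(g_i)(F_0)$ form a chain of finite subsets of $K$ starting and ending at $F_0$ (since $f \in \ker \pi$), whose union has cardinality at most $(N+1)|F_0|$. Each $s_i$ induces an isometry from the Teichm\"uller space of $S \setminus \pi(g_{i-1})(F_0)$ to that of $S \setminus \pi(g_i)(F_0)$, and the total composition realises the action of $\rho_{F_0}(f^n)$ on $\mathcal{T}(S\setminus F_0)$. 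The idea is to set up coherent reference identifications between the Teichm\"uller spaces attached to finite subsets in the $\mathcal{G}$-orbit of $F_0$ so that each generator moves a fixed basepoint by a uniformly bounded amount while $f^n$ moves it by at least $c n$; the linear lower bound $N \geq (c/C)n$ follows.

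The main obstacle is this last step: since the generators $s_i$ need not preserve any finite subset of $K$, there is no immediate way to compare their actions on a single Teichm\"uller space. A clean way to handle it would be to construct a homogeneous quasi-morphism $\Phi\colon \mcg(S,K) \to \R$ whose restriction to $\pmcg(S,K)$ is non-zero on $f$, either by averaging a Bestvina-Fujiwara type quasi-morphism on $\mcg(S,F_0)$ over the $\mathcal{G}$-orbit of $F_0$ in an equivariant way, or by defining a coarsely geometric action of $\mcg(S,K)$ on the disjoint union of the spaces $\mathcal{T}(S\setminus F)$ for $F$ in that orbit. Alternatively, a translation-length argument on a Gromov-hyperbolic object such as the Bavard graph of \cite{Ba} might yield the estimate directly. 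This is where I would expect the bulk of the technical work to sit.
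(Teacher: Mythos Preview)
Your reduction to \cite{FLM} contains a specific error that undermines Step~3. You assert that undistortion of $\rho_{F_0}(f)$ in $\mcg(S,F_0)$ is ``equivalently'' positive translation length on $\mathcal{T}(S\setminus F_0)$, and later that $f^n$ moves a fixed basepoint by at least $cn$. This fails whenever the Nielsen--Thurston decomposition of $\rho_{F_0}(f)$ has no pseudo-Anosov piece: already for the once-punctured torus one has $\mathcal{T}\cong\mathbb{H}^2$ with the Teichm\"uller metric a multiple of the hyperbolic one, a Dehn twist acts as a parabolic isometry, and $d_{\mathcal T}(p_0,T_\alpha^n\cdot p_0)$ grows only like $\log n$; the same holds in higher complexity by Minsky's product-region description of the thin part. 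What \cite{FLM} actually proves is undistortion in the \emph{word metric} of $\mcg(S,F_0)$, and there is no homomorphism $\mcg(S,K)\to\mcg(S,F_0)$ through which to pull that statement back. Your derivative bound does give $d_{\mathcal T}(p_0,\rho_{F_0}(f^n)\cdot p_0)\le CN$---this part is correct and is essentially Lemma~\ref{lengthcurves}---but combined with a $\log n$ lower bound it yields only $N\ge c\log n$, which says nothing about distortion. So your scheme handles the pseudo-Anosov case and collapses precisely in the Dehn-twist case; the fallback suggestions (averaged quasi-morphisms, the Bavard graph) are not developed enough to assess, and for the latter one should not expect Dehn twists about curves bounding small disks to act loxodromically.

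The paper's argument is organised around exactly this obstacle. After the Franks--Handel normal form (Corollary~\ref{FH2}) it treats three cases, producing in each a function on curves in $S\setminus K$ that grows linearly under $f$ and changes by at most a fixed constant under any single generator of $\mcg(S,K)$: geodesic length for a pseudo-Anosov component (Lemma~\ref{lengthcurves}); a combinatorial count $c$ on words in $\pi_1(S\setminus K)$, controlled via the Alibegovi\'c/Cooper cancellation estimates, for a Dehn twist about a curve essential in $S$ (Propositions~\ref{disk}--\ref{ali}); and Franks--Handel's spread $L_{\beta,\gamma}$ for Dehn twists about curves nullhomotopic in $S$ (Subsection~\ref{Dehntwist}). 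The per-generator constants come from derivative bounds on fixed diffeomorphism representatives, so no forgetful map to a finite-type surface is ever needed. If you want to repair your approach, what you are really looking for is an analogue of annular-projection distance that is defined on $S\setminus K$ and is Lipschitz for the whole group $\mcg(S,K)$; the spread invariant is essentially that.
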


If we change the $C^{\infty}$ regularity into a $C^{r}$ regularity, for some $r \geq 1$, this theorem is still true and can be proved in the same way. It is important to point out that any element in $\pmcg(S, K)$ can be thought as a mapping class of a surface of finite type (see Corollary \ref{local}) and is therefore much easier to deal with compared to other elements of $\mcg(S,K)$. In fact, this group is a limit of finite-type surface mapping class groups.\\

Theorem \ref{m1} is proven using some of the techniques developed by Franks and Handel in \cite{FH} to show that there are no distorted element in the groups of area-preserving diffeomorphisms of surfaces and techniques used by Alibegovic in \cite{Ali} to prove the same thing in the outer automorphism group of a free group.\\

Denote by $\mathcal{M}^{0}(S, K)$ the quotient of the group of orientation-preserving homeomorphisms of $S$ which preserve $K$ by the subgroup consisting of homeomorphisms of $S$ which are isotopic to the identity relative to $K$. It is worth pointing out that if the smoothness assumption is dropped and if $K$ is a Cantor set, one can construct distorted elements in $\mathcal{M}^{0}(S, K)$. One can even construct elements which fix the set $K$ pointwise and which are distorted in the subgroup $\mathcal{M}^{0}_{0}(S, K)$ of $\mathcal{M}^{0}(S, K)$ consisting of homeomorphisms isotopic to the identity in $S$. In particular, Theorem \ref{m2} below implies that the group $\mathcal{M}^{0}(S, K)$ is not isomorphic to $\mcg(S,K)$: the behavior of these groups is different from the behavior of classical mapping class groups (\emph{i.e.} when $K$ is a finite set).\\

 In the smooth case, we have not been able so far to produce any distorted element for our groups $\mcg(S,K)$ (even when $S = \s^1$ is the circle and $K \subset \s^1$).\\

We will then focus on one of the simplest examples of Cantor sets in surfaces: the standard ternary Cantor set $C_{\lambda}$ contained in an embedded segment $l \subset S$ (see Section \ref{scs} for a precise definition of $C_{\lambda}$). It is shown in \cite{FN} that the group $ \mathfrak{diff}_{S}(C_{\lambda})$ consists of piecewise affine homeomorphisms of $C_{\lambda}$ and therefore $\mathfrak{diff}_{S}(K)$ is very``similar" to Thompson's group $V_2$, see Lemma \ref{v2v2}.\\

Using the fact that there are no distorted elements in Thompson's groups $V_n$ (see \cite{B}) and the exact sequence~\eqref{exseq}, we are then able to show the following:

\begin{thm}\label{m2}
Let $S$ be a closed surface and $0< \lambda < 1/2$. Then, there are no distorted elements in the group $\mcg(S, C_{\lambda})$, where $C_{\lambda}$ is an embedding of the standard ternary Cantor set with parameter $\lambda$ in $S$.
\end{thm}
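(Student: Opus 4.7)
My plan is to exploit the exact sequence~\eqref{exseq} together with Theorem~\ref{m1} (which handles the kernel $\pmcg(S, C_\lambda)$) and Burillo's theorem~\cite{B} asserting that Thompson's groups $V_n$ contain no distorted elements. Suppose, for contradiction, that $f \in \mcg(S, C_\lambda)$ is distorted with respect to some finite symmetric set $\mathcal{G}$, and denote by $\bar{f} \in \mathfrak{diff}_S(C_\lambda)$ its image under~\eqref{exseq}.

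I would then split the argument according to the order of $\bar{f}$. If $\bar{f}$ has finite order $k$, then $f^k$ lies in $\pmcg(S, C_\lambda)$. Since $f$ has infinite order, so does $f^k$, and from $l_{\mathcal{G}}((f^k)^n) = l_{\mathcal{G}}(f^{kn}) = o(kn) = o(n)$ one sees that $f^k$ is itself distorted in $\mcg(S, C_\lambda)$. This contradicts Theorem~\ref{m1}.

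If instead $\bar{f}$ has infinite order, I would push the distortion through the quotient map. The image $\bar{\mathcal{G}}$ of $\mathcal{G}$ in $\mathfrak{diff}_S(C_\lambda)$ is finite and symmetric, contains $\bar{f}$ in the subgroup it generates, and satisfies $l_{\bar{\mathcal{G}}}(\bar{f}^n) \leq l_{\mathcal{G}}(f^n)$; hence $\bar{f}$ is a distorted element of infinite order in $\mathfrak{diff}_S(C_\lambda)$. Invoking Lemma~\ref{v2v2}, which realizes $\mathfrak{diff}_S(C_\lambda)$ as (a subgroup of) Thompson's group $V_2$, transports this to a distorted element of $V_2$, whereupon Burillo's theorem~\cite{B} supplies the contradiction.

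The step I expect to need the most care is the second one, where the distortion of $\bar{f}$ must be transferred into $V_2$. The content of Lemma~\ref{v2v2} has to be strong enough to yield a genuine group embedding of $\mathfrak{diff}_S(C_\lambda)$ into $V_2$ (or into some group whose non-distortion is already known), not merely a qualitative ``similarity''; the hypothesis $\lambda < 1/2$ enters precisely so that the gaps in $C_\lambda$ are large enough to force elements of $\mathfrak{diff}_S(C_\lambda)$ to be piecewise affine, in the sense established in \cite{FN}. Once this identification is in hand, the remainder is a formal manipulation of distortion under taking powers and quotients.
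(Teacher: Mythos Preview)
Your proposal is correct and follows essentially the same approach as the paper: push the distortion through the exact sequence, use the embedding of $\overline{V_2}=\mathfrak{diff}_{\mathbb{R}^2}(C_\lambda)$ into $V_2$ (Proposition~\ref{v2v2}) together with the absence of distorted elements in $V_2$ to force $\bar f$ to have finite order, and then invoke Theorem~\ref{m1} on the power $f^k\in\pmcg(S,C_\lambda)$. The only point you leave implicit is the identification $\mathfrak{diff}_S(C_\lambda)\cong\mathfrak{diff}_{\mathbb{R}^2}(C_\lambda)$, which the paper supplies via Corollary~\ref{invariance}; once that is inserted, your argument and the paper's coincide.
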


Here also , the theorem holds in case of a $C^{r}$ regularity, for $r \geq 1$.

\end{subsection}

\begin{subsection}{Tits alternative} 

By a theorem by McCarthy (see \cite{Mac}), mapping class groups of finite type satisfy the Tits alternative, i.e., any subgroup $\Gamma \subset \mcg(S)$ either contains a free subgroup on two generators or is virtually solvable. In \cite{M}, Margulis proved that the group $\text{Homeo}(\s^1)$ of homeomorphisms of the circle satisfies a similar alternative. More precisely, he proved that a group $\Gamma \subset \text{Homeo}(\s^1)$ either preserves a measure on $\s^1$ or contains a free subgroup on two generators, see \cite{N}. \\

Ghys asked whether the same statement holds for $\D(S)$ for a surface $S$ (see \cite{G}). We believe that some kind of similar statement should hold for our groups $\mcg(S, K)$. Here, we obtain the following result in the case where the Cantor set $K$ is the standard ternary Cantor set $C_{\lambda}$.

\begin{thm}\label{m3}
Let $\Gamma$ be a finitely generated subgroup of $\mcg(S, C_{\lambda})$, then one of the following holds:

\begin{enumerate}

\item $\Gamma$ contains a free subgroup on two generators.
\item $\Gamma$ has a finite orbit, i.e. there exists $p \in C_{\lambda}$ such that the set $\Gamma(p) := \{ g(p) \: | g \in \Gamma\}$ is finite.

\end{enumerate}

\end{thm}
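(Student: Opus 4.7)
The plan is to reduce to a statement about the quotient $\mathfrak{diff}_S(C_\lambda)$ using the exact sequence \eqref{exseq}, which by Lemma \ref{v2v2} is closely related to Thompson's group $V_2$. Let $\pi : \mcg(S,C_\lambda) \to \mathfrak{diff}_S(C_\lambda)$ denote the natural projection and set $\bar\Gamma = \pi(\Gamma)$, a finitely generated subgroup. Theorem \ref{m3} will follow at once from a dynamical Tits alternative for $\bar\Gamma$, combined with standard lifting arguments.

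The main step is to establish the following statement: any finitely generated subgroup $H \leq \mathfrak{diff}_S(C_\lambda)$ acting on $C_\lambda$ either contains a non-abelian free subgroup or has a finite orbit in $C_\lambda$. I would argue it as follows. Assume $H$ has no finite orbit, and pick a minimal non-empty $H$-invariant closed subset $X \subseteq C_\lambda$; this set is necessarily infinite, and since elements of $H$ act by piecewise affine bijections (by Lemma \ref{v2v2}) the induced dynamics on $X$ is rich. From this structure one extracts an element $g \in H$ of infinite order together with two disjoint clopen subsets $A_g^{+}, A_g^{-} \subset C_\lambda$ meeting $X$ that play the role of an attractor and a repeller for $g$, in the sense that $g^n(C_\lambda \setminus A_g^{-}) \subset A_g^{+}$ for $n$ large. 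Because $X$ has no finite $H$-orbit, minimality furnishes an element $h \in H$ moving $A_g^{+} \cup A_g^{-}$ off itself, so that the conjugate $g' = h g h^{-1}$ has attractor--repeller data $A_{g'}^{\pm}$ disjoint from $A_g^{\pm}$. A standard Klein ping-pong argument then shows that, for $N$ sufficiently large, the elements $g^N$ and $(g')^N$ generate a free subgroup of rank $2$ in $H$.

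Granted this dynamical alternative for $\bar\Gamma$, the theorem follows from two easy observations. First, if $\bar\Gamma$ has a finite orbit $O \subset C_\lambda$, then since the action of $\Gamma$ on $C_\lambda$ factors through $\pi$, the set $O$ is also a finite $\Gamma$-orbit, giving conclusion (2). Second, if $\bar\Gamma$ contains a free subgroup $F$ of rank $2$, then freeness of $F$ allows us to lift the inclusion $F \hookrightarrow \bar\Gamma$ to a homomorphism $\phi : F \to \Gamma$ by sending generators of $F$ to arbitrary preimages in $\Gamma$. Since $\pi \circ \phi$ is the original injection, $\phi$ itself is injective, so $\Gamma$ contains $F_2$, giving conclusion (1).

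The hard part, clearly, is the dynamical Tits alternative for $\mathfrak{diff}_S(C_\lambda)$. The main obstacle is the construction of the second ping-pong element $g'$ with attractor--repeller data disjoint from that of $g$ in the absence of finite orbits; this is where one genuinely exploits the piecewise affine structure furnished by Lemma \ref{v2v2} rather than general properties of homeomorphism groups of Cantor sets. Care is also needed to check that the elementary element $g$ with attractor--repeller clopen pair actually exists in every finitely generated subgroup without a finite orbit, which should rely on a careful inspection of the normal forms of elements of $V_2$.
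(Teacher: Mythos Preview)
Your overall plan matches the paper's: reduce via $\pi$ to a statement about $\mathfrak{diff}_S(C_\lambda)\hookrightarrow V_2$ (this is exactly Theorem~\ref{m4}), then lift a free subgroup or a finite orbit back to $\Gamma$. The lifting arguments you give are correct and the same as the paper's.

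The gap is in your sketch of the dynamical alternative inside $V_2$. You posit an element $g$ with $g^n(C_\lambda\setminus A_g^-)\subset A_g^+$ for large $n$, i.e.\ an element whose dynamics on the \emph{whole} Cantor set are of pure north--south type. But a general element of $V_n$ decomposes the Cantor set as $U_g\cup V_g$ with $g|_{U_g}$ of finite order (Lemma~\ref{contra}); the attractor--repeller picture holds only on $V_g$. There is no a priori reason a finitely generated subgroup without finite orbits should contain any element with $U_g=\emptyset$, and the paper never proves this. Instead it shows (Proposition~\ref{kf}) that for any finite $F\subset\Gamma$ one can manufacture an element $h_\epsilon$ whose entire periodic set, including $U_{h_\epsilon}$, lies in $K_F\cup N_\epsilon(S_F)$ for a fixed finite set $S_F$; when $K_F=\emptyset$ this yields elements with arbitrarily small periodic support. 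The construction of $h_\epsilon$ is itself delicate (Lemma~\ref{disjoint}), because attractors of one factor may coincide with repellers of the other. The residual case $K_\Gamma=\bigcap_{g\in\Gamma}U_g\neq\emptyset$ is handled by a separate argument, and this is the only place finite generation is actually used.

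Your second gap is the assertion that ``minimality furnishes'' an $h$ displacing $A_g^+\cup A_g^-$. Minimality alone does not give this, even when the sets are tiny neighbourhoods of a finite set; this is precisely Lemma~\ref{m5}, which the paper proves via a stationary-measure argument on $K^n$ and flags in the introduction as a key ingredient. Once Lemma~\ref{m5} and Proposition~\ref{kf} are available, the ping-pong runs as you describe (Lemma~\ref{pingpong}).
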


This theorem also holds in case of a $C^{r}$ regularity, for $r \geq 1$.

Note also the following immediate corollary of Theorem \ref{m3}. This corollary is similar to Margulis's theorem on the group of homeomorphisms of the circle.

\begin{cor}
Let $\Gamma$ be a subgroup of $\mcg(S, C_{\lambda})$, then one of the following holds:

\begin{enumerate}

\item $\Gamma$ contains a free subgroup on two generators.
\item The action of $\Gamma$ on the Cantor set $C_{\lambda}$ has an invariant probability measure.
\end{enumerate}
\end{cor}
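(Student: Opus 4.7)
The plan is to derive this from Theorem~\ref{m3} by a standard Markov--Kakutani style compactness argument on the space of probability measures, after a short reduction to finitely generated subgroups. Suppose the first alternative fails, so $\Gamma$ contains no free subgroup on two generators. Since a free group on two generators is itself finitely generated, no finitely generated subgroup $\Gamma_0 \le \Gamma$ contains one either. Theorem~\ref{m3} then supplies, for each such $\Gamma_0$, a finite $\Gamma_0$-orbit $O_{\Gamma_0} \subset C_\lambda$, and hence a $\Gamma_0$-invariant probability measure $\mu_{\Gamma_0}$ on $C_\lambda$, namely the normalized counting measure on $O_{\Gamma_0}$.

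Before proceeding, one should confirm that $\mcg(S,C_\lambda)$ acts on $C_\lambda$ at all. A path $(f_t)$ in $\D(S,C_\lambda)$ starting at the identity sends each $x \in C_\lambda$ to a continuous path $t\mapsto f_t(x)$ inside the totally disconnected set $C_\lambda$, which must therefore be constant. Hence any element of $\D_0(S,C_\lambda)$ fixes $C_\lambda$ pointwise, and the restriction homomorphism $\mcg(S,C_\lambda) \to \mathfrak{diff}_S(C_\lambda)$ is well-defined.

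To upgrade the family $\{\mu_{\Gamma_0}\}$ to a single $\Gamma$-invariant measure, let $P(C_\lambda)$ denote the space of Borel probability measures on $C_\lambda$ in the weak-$*$ topology; since $C_\lambda$ is compact and metrizable, so is $P(C_\lambda)$. For each $g \in \Gamma$, the set
\[
M_g := \{\mu \in P(C_\lambda) : g_*\mu = \mu\}
\]
is closed in $P(C_\lambda)$, as $g$ acts continuously on this space. Given any $g_1,\dots,g_n \in \Gamma$ and $\Gamma_0 := \langle g_1,\dots,g_n \rangle$, the measure $\mu_{\Gamma_0}$ lies in $M_{g_1}\cap\dots\cap M_{g_n}$, so the family $\{M_g\}_{g\in\Gamma}$ satisfies the finite intersection property. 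By compactness, $\bigcap_{g\in\Gamma} M_g$ is nonempty, and any element of it is a $\Gamma$-invariant probability measure on $C_\lambda$, establishing alternative~(2).

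I do not expect any real obstacle: once Theorem~\ref{m3} is granted, the corollary reduces to the well-definedness of the action together with a routine weak-$*$ compactness argument. The only point deserving attention is the passage from ``finite orbit for each finitely generated subgroup'' to ``invariant measure for the whole group'', which is handled by the finite intersection property above and does not require that $\Gamma$ itself be finitely generated.
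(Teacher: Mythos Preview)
Your proof is correct and follows essentially the same route as the paper: reduce to finitely generated subgroups via Theorem~\ref{m3}, turn the resulting finite orbits into invariant probability measures, and pass to the limit by weak-$*$ compactness of $P(C_\lambda)$. The only cosmetic difference is that the paper exploits countability of $\mcg(S,C_\lambda)$ to write $\Gamma$ as an increasing union of finitely generated subgroups and takes a sequential accumulation point, whereas your finite-intersection-property formulation achieves the same conclusion without invoking countability.
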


To obtain the corollary from the theorem, observe that, as the group $\mcg(S, C_{\lambda})$ is countable, such a subgroup $\Gamma$ can be written as the union of a sequence $(\Gamma_{p})_{p \in \mathbb{N}}$ of finitely generated subgroups with $\Gamma_{p} \subset \Gamma_{p+1}$ for all $p$. By Theorem \ref{m3}, the action of each subgroup $\Gamma_{p}$ has an invariant probability measure $\mu_{p}$. Now, an invariant probability measure for the action of $\Gamma$ is obtained by taking an accumulation point of the sequence $(\mu_{p})_{p \in \mathbb{N}}$. 

We will deduce Theorem \ref{m3} as an immediate corollary of the following statement about Thompson's group $V_n$, which could be of independent interest:\\

\begin{thm}\label{m4} For any finitely generated subgroup  $ \Gamma \subset V_n$, either $\Gamma$ has a finite orbit or $\Gamma$ contains a free subgroup.
\end{thm}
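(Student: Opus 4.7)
The plan is to prove the contrapositive: assuming a finitely generated subgroup $\Gamma \subseteq V_n$ has no finite orbit on the $n$-ary Cantor set $C$ on which $V_n$ naturally acts, I construct a nonabelian free subgroup of $\Gamma$ by a ping-pong argument on $C$. By Zorn's lemma, $C$ contains a minimal nonempty closed $\Gamma$-invariant subset $M$; the hypothesis forces $M$ to be infinite, hence a Cantor set.

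The heart of the argument is to produce an element $g_1 \in \Gamma$ with \emph{north--south dynamics}: disjoint clopen sets $A_1^+, A_1^- \subseteq C$, each meeting $M$, with $g_1(C \setminus A_1^-) \subseteq A_1^+$ and $g_1^{-1}(C \setminus A_1^+) \subseteq A_1^-$. To achieve this, I would exploit the piecewise affine structure: every $s \in V_n$ is a bijection between two finite $n$-adic partitions of $C$, affine on each piece. Fix $p \in M$; minimality of $M$ yields a sequence $(s_k) \subseteq \Gamma$ with $s_k(p) \to p$ and $s_k(p) \neq p$. For each $k$, let $U_k$ be the piece of the source partition of $s_k$ that contains $p$, and set $V_k := s_k(U_k)$. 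Both $U_k$ and $V_k$ are $n$-adic cubes, so they are nested or disjoint; the equal case is excluded since it would force $s_k|_{U_k}=\mathrm{id}$ and hence $s_k(p)=p$. Thus one of three subcases holds: $V_k \subsetneq U_k$, $U_k \subsetneq V_k$, or $V_k \cap U_k = \emptyset$. In the first, $s_k$ strictly contracts $U_k$; in the second, $s_k^{-1}$ strictly contracts $V_k$. In both cases, taking a high power yields north--south dynamics on disjoint clopen sets meeting $M$.

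The wandering case $V_k \cap U_k = \emptyset$ is the main obstacle. I would treat it by considering products $s_j s_k^{-1}$ and applying pigeonhole to the finitely many combinatorial configurations of the involved $n$-adic partitions, in order to reduce to one of the comparable cases. In the degenerate situation where $\Gamma$ lies entirely in the slope-one subgroup of $V_n$ (so comparability always fails), $\Gamma$ preserves the canonical Bernoulli measure on $C$, and a Poincar\'e-recurrence argument applied to the minimal action of $\Gamma$ on $M$ still produces a clopen set strictly contracted by some element of $\Gamma$. The necessity of this structural input beyond purely topological dynamics on $C$ is visible from the odometer, which generates a cyclic subgroup of $\homeo(C)$ with no finite orbit and no free subgroup; odometers are not in $V_n$, and it is the discreteness of the local combinatorial data of $V_n$ that rescues the statement.

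Once $g_1$ is in hand, a second hyperbolic element $g_2$ is produced by conjugation: using minimality of $\Gamma$ on $M$, and shrinking $A_1^\pm$ inside $M$ if necessary, choose $t \in \Gamma$ with $t(A_1^+ \cup A_1^-) \cap (A_1^+ \cup A_1^-) = \emptyset$ and set $g_2 := t g_1 t^{-1}$. Then $g_2$ has north--south dynamics on the disjoint clopen sets $t(A_1^\pm)$. The ping-pong lemma applied to sufficiently large powers $g_1^N$ and $g_2^N$ then produces a rank-two free subgroup of $\Gamma$, completing the proof.
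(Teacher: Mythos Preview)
Your outline has the right shape---find a hyperbolic element, displace it by conjugation, play ping-pong---but several load-bearing steps do not go through as written.

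\textbf{Local contraction does not give global north--south dynamics.} When $V_k \subsetneq U_k$, you correctly get an attracting fixed point for $s_k$ inside $U_k$, but you then assert that a high power of $s_k$ has north--south dynamics on $C$. This is false in general: an element of $V_n$ typically decomposes as a finite-order piece on a clopen set $U_{s_k}$ together with attracting--repelling dynamics on its complement $V_{s_k}$, with possibly \emph{several} attractors and repellers (see the revealing-pair description, Lemma~\ref{contra} in the paper). Nothing prevents $s_k$ from being the identity on a large clopen set disjoint from $U_k$; no power of such an element is north--south on $C$. The paper spends most of Section~5 (Propositions~\ref{rf} and~\ref{kf} and Lemma~\ref{disjoint}) precisely on the problem of manufacturing, out of elements with nonempty $U_g$ and many periodic points, a single element whose entire periodic set is confined to an arbitrarily small neighborhood of a fixed finite set. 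Your argument skips this.

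\textbf{The slope-one case is handled incorrectly.} You claim that when $\Gamma$ lies in the slope-one subgroup, Poincar\'e recurrence still produces a strictly contracted clopen set. But slope-one elements preserve the Bernoulli measure, so no element of such a $\Gamma$ can strictly contract any clopen set; the conclusion you draw is impossible. (The paper avoids this issue because its final Lemma~5.12 uses finite generation to force, via a pigeonhole on interval sizes, an element with nontrivial slope whenever the minimal set is infinite.)

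\textbf{Displacing the attractor--repeller pair requires more than minimality.} You write that minimality of $\Gamma$ on $M$ lets you choose $t$ with $t(A_1^+\cup A_1^-)\cap(A_1^+\cup A_1^-)=\emptyset$. Minimality gives dense orbits, but it does not by itself guarantee an element simultaneously moving a prescribed finite set off itself. The paper isolates exactly this statement as Lemma~\ref{m5} and proves it with a stationary (harmonic) measure argument; it is one of the two genuinely new ingredients in the proof, and your outline omits it.

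In summary, the paper's proof and yours diverge at the point where an element of $V_n$ may have a nontrivial finite-order part $U_g$. The paper's machinery (revealing pairs, the inductive Proposition~\ref{kf}, and the displacement Lemma~\ref{m5}) is built to cope with this; your sketch implicitly assumes it away.
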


The proof of this result involves the study of the dynamics of elements of $V_n$ on the Cantor set $K_n$ where it acts naturally. These dynamics are known to be of contracting-repelling type and the spirit of our proof is similar to the proof of Margulis for $\text{Homeo}(\s^1)$. The proof of this theorem uses the following lemma, which might be useful to prove similar results.\\

\begin{lem}\label{m5} Let $\Gamma$ be a countable group acting on a compact space $K$ by homeomorphisms and a finite subset $F \subset K$. Then there is finite orbit of $\Gamma$ on $K$ or there is an element $g\in \Gamma$ sending $F$ disjoint from itself (i.e. $g(F ) \cap F =  \emptyset $).

\end{lem}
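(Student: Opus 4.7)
The plan is to observe that this statement is purely group-theoretic: neither the topology on $K$ nor the continuity of the action plays any role. I would argue by contradiction, assuming that every $g \in \Gamma$ satisfies $g(F) \cap F \neq \emptyset$, and derive from this that some point of $F$ must have finite $\Gamma$-orbit.

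Under this assumption, for each $g \in \Gamma$ there exist $x, y \in F$ with $g(x) = y$, so, writing $S_{x,y} := \{h \in \Gamma : h(x) = y\}$, one has
\[
\Gamma = \bigcup_{(x,y) \in F \times F} S_{x,y},
\]
a \emph{finite} union. Each non-empty $S_{x,y}$ is a left coset $h_0 \cdot \mathrm{Stab}_\Gamma(x)$ of the point stabilizer (where $h_0$ is any fixed element of $S_{x,y}$), so $\Gamma$ is covered by finitely many cosets of the subgroups $\{\mathrm{Stab}_\Gamma(x) : x \in F\}$. At this point I would invoke B.~H.~Neumann's classical lemma on coset coverings, which states that a group cannot be written as a finite union of cosets of subgroups all of infinite index. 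Consequently, some $\mathrm{Stab}_\Gamma(x)$ has finite index in $\Gamma$, whence the orbit $\Gamma \cdot x \subset K$ is finite --- the desired finite orbit.

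I do not foresee a genuine obstacle: the only nontrivial input is Neumann's lemma, but the reduction to it is essentially tautological once the coset covering is written down. One could instead pursue a more topological route --- for instance, extracting a minimal closed $\Gamma$-invariant subset of the product $K^{F}$ and analysing when its projections to the factors can avoid $F$ --- but this seems strictly more roundabout; notably, neither countability of $\Gamma$ nor compactness of $K$ is actually used in the argument above.
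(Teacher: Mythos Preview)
Your argument is correct and takes a genuinely different route from the paper's. The paper proceeds via a probabilistic approach inspired by Horbez: one fixes a probability measure $\mu$ on $\Gamma$ whose support generates $\Gamma$, constructs a $\mu$-stationary (harmonic) probability measure $\nu$ on $K^{|F|}$ supported on the orbit closure of the tuple $\vec p=(p_1,\ldots,p_n)$ listing the points of $F$, observes that under the standing assumption this orbit closure lies in $\bigcup_{i,l+m=n-1} K^l\times\{p_i\}\times K^m$, and then uses the stationarity equation to show that any $q$ maximizing $\nu(K^l\times\{q\}\times K^m)$ has finite $\Gamma$-orbit. Your argument instead strips away the topology entirely and reduces directly to B.~H.~Neumann's coset-covering lemma. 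This is both shorter and strictly more general: as you note, it uses neither the countability of $\Gamma$ nor the compactness of $K$ (nor continuity of the action), and it yields immediately the slightly sharper conclusion---stated by the paper only as a subsequent remark---that the finite orbit can be taken to pass through a point of $F$ itself. The paper's harmonic-measure approach, by contrast, showcases a technique that recurs in other Tits-alternative arguments and connects the lemma to that broader circle of ideas.
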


The proof of the previous lemma is based on Horbez's recent proof of the Tits alternative for mapping class groups and related automorphisms groups (see \cite{CH1} and \cite{CH2}).

\end{subsection}

\begin{subsection}{Outline of the article}

In Section 2, we prove Theorem \ref{m1}. In Section 3, we show that the group $\mathfrak{diff}_{S}(K)$ is independent of the surface $S$ where $K$ is embedded. Then, we will focus on the study of the standard ternary Cantor set, and, in  Section 4, we prove Theorem \ref{m2}. Finally, in Section 5, we prove Theorems \ref{m3}, \ref{m4} and Lemma \ref{m5}. Section 5 is independent of Section 2 and Section 3 is independent of the other sections.

\end{subsection}

\begin{subsection}{Acknowledgements}
We would like to thank Michael Handel for pointing out Alibegovic's results could be useful for the proof of Theorem \ref{m1} (and they indeed were).
\end{subsection}

\end{section}

\begin{section}{Distorted elements in smooth mapping class groups}

In this section, we prove Theorem \ref{m1}, which we restate now:

\begin{thm}\label{main2}  Let $S$ be a closed surface and $K$ be a closed subset of $S$ which is a Cantor set. The elements of $\pmcg(S, K)$ are undistorted in $\mcg(S,K)$.
\end{thm}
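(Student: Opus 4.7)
The plan is to adapt the techniques of Franks--Handel and Alibegovic mentioned in the introduction. The first step is to use Corollary \ref{local} to realize $f \in \pmcg(S,K)$ as a classical finite-type mapping class: there exists a finite subset $F = \{p_1,\ldots,p_m\} \subset K$ and a smooth representative $\tilde f \in \D(S,K)$ supported in an open set whose closure meets $K$ only in $F$. In particular $f$ descends canonically to an element of the usual finite-type mapping class group $\mcg(S,F)$, to which the Nielsen--Thurston classification applies.

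Suppose for contradiction that $f$ is distorted in $\mcg(S,K)$ with respect to a symmetric finite generating set $\mathcal{G}$, and fix smooth orientation-preserving representatives $\tilde g$ of each $g \in \mathcal{G}$, together with an auxiliary Riemannian metric on $S$. Let $C \geq 1$ denote the maximum Lipschitz constant of these representatives. Then for any factorization $f^n = s_1 s_2 \cdots s_{l_n}$ with $l_n = l_{\mathcal{G}}(f^n)$, the composition $\tilde s_1 \circ \cdots \circ \tilde s_{l_n}$ is a smooth representative of $f^n$ (up to isotopy rel $K$) and has Lipschitz constant at most $C^{l_n}$. The strategy is to produce an isotopy invariant that grows linearly in $n$ under iteration of $f$, yet is controlled by a linear function of $l_n$ under arbitrary compositions of generators, forcing $l_n \geq c\, n$.

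Apply the Nielsen--Thurston decomposition to $f \in \mcg(S,F)$. If some component is pseudo-Anosov on a subsurface $Y \subset S$ with $\partial Y \cap K = \emptyset$, then $f$ preserves on $Y$ a pair of singular measured foliations with dilatation $\lambda > 1$; any smooth representative of $f^n$ stretches a fixed transverse arc in $Y$ to length at least $c\, \lambda^n$, while the Lipschitz bound $C^{l_n}$ controls the same length from above, yielding $l_n \geq (\log \lambda / \log C)\, n + O(1)$ and contradicting distortion. This is the Franks--Handel style argument applied to the pseudo-Anosov piece.

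The main obstacle is the reducible case, in which $f$ is a nontrivial composition of powers of Dehn twists along a disjoint collection of curves $\alpha_1,\ldots,\alpha_r \subset S \setminus F$. Here the length bound alone is insufficient, since the hyperbolic length of $T_{\alpha_j}^n(\beta)$ grows only linearly in $n$, giving merely $l_n \gtrsim \log n$. Following Alibegovic, one must track a subsurface-projection style invariant measuring the relative twisting of a fixed essential arc $\beta$ across each annular neighborhood of $\alpha_j$; this quantity grows linearly in $n$ under iteration of $f$. The delicate step, which I expect to be the hardest, is to show that an arbitrary element $g \in \mcg(S,K)$ (which may move the $\alpha_j$ in a complicated way and permute points of $K$ arbitrarily) can change this invariant by at most a uniform additive constant, producing the required bound $l_n \geq cn$. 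This is the analogue, in our setting, of the Behrstock inequality controlling subsurface projections for classical mapping class groups, and it crucially uses that $f$ fixes $K$ pointwise, so that the reducing curves $\alpha_j$ are canonically attached to $f$ even though they need not be preserved by the generators.
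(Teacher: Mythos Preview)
Your overall architecture matches the paper's: reduce via Corollary~\ref{local} and a Nielsen--Thurston normal form (the paper's Corollary~\ref{FH2}) to a pseudo-Anosov piece or a multitwist, and dispose of the pseudo-Anosov case by exponential length growth exactly as you describe (this is Lemma~\ref{lengthcurves}).

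The gap is that you stop precisely where the real work begins. In the multitwist case you propose a single ``subsurface-projection / Behrstock'' twisting invariant and explicitly leave the bounded-change-under-generators property unproved. The paper in fact needs \emph{two different} invariants here, depending on whether some twisting curve $\alpha_j$ is essential in the closed surface $S$, or all of them bound disks (containing pieces of $K$). When some $\alpha_j$ is essential, the paper uses an Alibegovic-style invariant $c([w])$ on the free group $\pi_1(S\setminus K)$, counting the maximal power of a subword that projects nontrivially to $\pi_1(S)$; the bounded-change property (Proposition~\ref{ali}) is obtained by factoring an arbitrary $g\in\mcg(S,K)$ as a disk-supported homeomorphism times a pure element and treating each factor separately via word combinatorics in the free group (Propositions~\ref{disk} and~\ref{fix}, the latter resting on Cooper's lemma). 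When every $\alpha_j$ is nullhomotopic in $S$ this invariant is useless (for $S=\mathbb S^2$ one has $c\equiv 0$), and the paper switches to the Franks--Handel \emph{spread} $L_{\beta,\gamma}$ of arcs with endpoints in $K$; this is closer to the twisting count you have in mind, and its sublinear growth for distorted elements is Corollary~\ref{spread}, after which one still has to exhibit an arc on which the spread grows linearly (Proposition~\ref{dehntwwists} and Lemma~\ref{cases}).

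One further correction: your intuition that the bounded-change step ``crucially uses that $f$ fixes $K$ pointwise'' is not how either bound is proved. Both $c([w])$ and $L_{\beta,\gamma}$ are invariants of isotopy classes in $S\setminus K$, and the bounded-change inequalities hold for \emph{every} element of $\mcg(S,K)$, pure or not; the pointwise-fixing hypothesis on $f$ is used only to obtain the normal form in the first place.
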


The proof and main ideas of Theorem \ref{main2} come from the work of Franks and Handel about distorted elements in surface diffeomorphism groups (see \cite{FH}) and the work of Alibegovic about distorted elements in the outer automorphism group of a free group (see \cite{Ali}).\\

The reason why the hypothesis $f|_{K}= \text{Id}$ makes things more simple is the following observation.\\

\begin{lem}[Handel]\label{handel} Let $S$ be a surface. Suppose $f$ is a diffeomorphism fixing a compact set $K$ pointwise. Suppose that $K$ contains an accumulation point $p$. Then, there exists a neighborhood $U$ of $p$ and an isotopy $f_t$ fixing $K$ pointwise such that $f_0 = f$ and $f_1 |_U = \text{Id}$.

\end{lem}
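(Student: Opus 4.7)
The plan is to work in local coordinates around $p$ and build the isotopy from the straight-line interpolation between $f$ and the identity, then globalize by cutting off the generating vector field. Choose a chart identifying a neighborhood of $p$ with a disk in $\mathbb{R}^2$ sending $p$ to the origin, and set $A := Df_0$.

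First I would use the accumulation hypothesis to constrain $A$. Pick $q_n \in K \setminus \{p\}$ with $q_n \to p$. From $f(q_n) = q_n$ and the Taylor expansion $f(x) = Ax + O(|x|^2)$ we get $(A - I)q_n = O(|q_n|^2)$; normalizing and extracting a limit of $q_n/|q_n|$ produces a unit vector $v$ with $(A - I)v = 0$, so $1$ is an eigenvalue of $A$. Since $f$ is orientation-preserving, $\det A > 0$, so the other eigenvalue $\mu = \det A$ is positive, and $(1-t)A + tI$ is invertible for every $t \in [0,1]$. On a small closed disk $D$ around the origin, define
\[
\phi_t(x) := (1-t)f(x) + t\,x.
\]
Then $\phi_0 = f|_D$, $\phi_1 = \mathrm{Id}|_D$, and crucially $\phi_t(x) = x$ for every $x \in K \cap D$ since $f$ is the identity on $K$, so the entire convex combination is trivial there. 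The derivative $D\phi_t(x) = (1-t)Df_x + tI$ is uniformly invertible on $[0,1] \times D$ by continuity and compactness of $[0,1]$, so for $D$ sufficiently small $\phi_t$ is a diffeomorphism of $D$ onto its image in $S$ for each $t$.

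To globalize while keeping $K$ fixed pointwise, I would pass to the infinitesimal picture. The isotopy of embeddings $\Psi_t := \phi_t \circ f^{-1} : f(D) \to S$ starts at the inclusion and ends at $f^{-1}|_{f(D)}$, and it fixes $K \cap f(D)$ pointwise. Let $Y_t$ be the smooth time-dependent vector field on a neighborhood $W$ of $p$ generating $\Psi_t$; since $\Psi_t$ fixes $K \cap W$, the field $Y_t$ vanishes on $K \cap W$. Choose a smooth cutoff $\chi : S \to [0,1]$ equal to $1$ on a small neighborhood $V$ of $p$ and supported in $W$, and set $\widetilde{Y}_t := \chi Y_t$, extended by zero. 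Then $\widetilde Y_t$ is a smooth time-dependent vector field on $S$ vanishing on all of $K$, whose flow $\widetilde\Phi_t$ is a smooth isotopy of $S$ fixing $K$ pointwise. After shrinking $V$ once more, every trajectory of $\widetilde Y_t$ based in $V$ remains in $\{\chi = 1\}$ throughout $[0,1]$, so it agrees with the $Y_t$-trajectory and $\widetilde\Phi_1|_V = f^{-1}|_V$. Setting $f_t := \widetilde\Phi_t \circ f$ then gives $f_0 = f$, an isotopy fixing $K$ pointwise (both factors do), and $f_1|_{f^{-1}(V)} = \mathrm{Id}$.

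The main obstacle I expect is the quantitative bookkeeping in the last step: simultaneously ensuring that $\phi_t$ is a genuine diffeomorphism of $D$ (not just a local diffeomorphism), that $Y_t$ is smoothly defined on a common neighborhood $W$ of $p$ for all $t \in [0,1]$ (which amounts to $\bigcap_t \phi_t(D)$ containing a neighborhood of $p$), and that the $\widetilde Y_t$-trajectories starting near $p$ do not leak out of $\{\chi = 1\}$ before time $1$. All three reduce to uniform $C^1$-closeness of $f|_D$ to its linear part $A$, which can be arranged by shrinking $D$, but these smallness conditions must be packaged compatibly into a single choice of $D$, $V$, and $\chi$.
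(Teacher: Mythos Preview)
Your proof is correct and follows the same overall strategy as the paper: the straight-line interpolation $\phi_t = (1-t)f + t\,\mathrm{id}$ in a chart, the eigenvalue argument using the accumulation hypothesis and orientation to see that $D\phi_t(p)$ is invertible for all $t$, and then globalization of the resulting local isotopy. The only real difference is in that last step: the paper invokes the Isotopy Extension Theorem (Milnor's h-cobordism book, Lemma~5.8) as a black box, whereas you carry out the extension explicitly by cutting off the generating time-dependent vector field. Your explicit route has the merit of making transparent why the extended isotopy can be taken to fix $K$ pointwise---a constraint the paper asserts but which is not part of the usual statement of the Isotopy Extension Theorem and rests precisely on your observation that the generating field vanishes on $K$. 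The bookkeeping concerns you flag (injectivity of $\phi_t$ on a small disk, a common domain for $Y_t$, and confinement of trajectories to $\{\chi=1\}$) are all genuine but routine, and each is handled by shrinking the disk using compactness of $[0,1]$ and continuous dependence, exactly as you indicate.
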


\begin{proof}

We consider the homotopy $h_t = tf + (1-t)\text{Id}$ in a coordinate chart around $p$. Take a sequence of points $p_n \in K$ converging to $p$. As $f(p_n) = p_n$, there exists $v \in T_p(S)$ such that $D_pf(v)=v$. Observe that the equation $D_ph_t(w) = 0 $ implies that $D_pf(w) = -\alpha w$, for some $\alpha>0$. As $D_pf(v) = v$ and $f$ is orientation preserving, this is not possible unless $w = 0$. Hence $D_ph_t$ is invertible.\\ 

This implies that, for every $t$,  $Dh_t$ is invertible in a neighborhood $U$ of $p$: there is a neighborhood $U$ of $p$ where $h_t$ is invertible and is therefore an actual isotopy between the inclusion $i: U \to S$ and $f|_U$.\\

Now, using the Isotopy Extension Theorem (Lemma 5.8 in Milnor h-cobordism book) we can extend the isotopy $h_t|_{U}$ to an actual isotopy $g_t$ of $S$ such that $g_0 = \Id$, $g_{t}$ fixes $K$ pointwise and that coincides with $h_t$ on $U$. Therefore the isotopy $f_t = g_t^{-1}f$ gives us the desired result.
\end{proof}

 If the closed set $K$ is perfect, we can take an appropriate finite cover of $K$ by coordinate charts and use the previous lemma to prove that every element $f \in \pmcg(S, K)$  is isotopic to a diffeomorphism $f'$ which is the identity in a small neighborhood of $K$. Hence it can be considered as an element of a mapping class group of a surface of finite type.\\

\begin{cor} \label{local}

Let $S$ be a surface. Suppose $f$ is a diffeomorphism fixing a compact perfect set $K$ pointwise. Then there exists a diffeomorphism $g$ isotopic to $f$ relative to $K$ and a finite collection of smooth closed disjoint disks $\{D_i\}$ covering $K$ such that $g|_{D_i} = \Id$.
\end{cor}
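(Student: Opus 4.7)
The plan is to first isotope $f$ relative to $K$ to a diffeomorphism $f'$ that equals the identity on an open neighborhood $W$ of $K$, and then, using that $K$ is a Cantor set (compact, totally disconnected, perfect), enclose $K$ in finitely many pairwise disjoint smooth closed disks lying inside $W$.

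For the first step, Lemma~\ref{handel} applies at every point of $K$ since $K$ is perfect. I proceed inductively, starting with $f^{(0)} = f$: at step $k+1$, I pick a point $q_{k+1} \in K$ outside the open region where $f^{(k)}$ is already known to equal the identity, and apply Lemma~\ref{handel} to $f^{(k)}$ at $q_{k+1}$. The key observation, visible from the proof of Lemma~\ref{handel}, is that the isotopy produced there is built from the explicit local isotopy $h_t = tf^{(k)} + (1-t)\Id$ on a small chart neighborhood of $q_{k+1}$ together with the isotopy extension theorem. Shrinking the chart lets the resulting global isotopy be chosen with support in an arbitrarily small neighborhood of $q_{k+1}$, in particular disjoint from the open region where $f^{(k)}$ is already the identity, so that region is preserved. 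A finite covering / compactness argument then guarantees that the procedure terminates after finitely many steps, producing $f'$ equal to $\Id$ on an open neighborhood $W$ of $K$.

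For the second step, compactness and total disconnectedness of $K$ yield, for any prescribed $\e > 0$, a finite partition $K = A_1 \sqcup \cdots \sqcup A_m$ into pairwise disjoint clopen (in $K$) subsets of diameter less than $\e$. Choosing $\e$ small enough that each $A_i$ lies inside a coordinate disk contained in $W$, I enclose each $A_i$ in a smooth closed disk $D_i$ contained in that coordinate disk; after slight shrinking the $D_i$ are pairwise disjoint and contained in $W$. Then $g := f'$ is isotopic to $f$ relative to $K$ and satisfies $g|_{D_i} = \Id$ for every $i$, as required.

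The main obstacle is making the induction in the first step rigorous: one must invoke a version of the isotopy extension theorem with control on the support of the extension, and one must guarantee termination even though the neighborhood produced by Lemma~\ref{handel} a priori depends on the current diffeomorphism. I expect to handle both points simultaneously by fixing once and for all a finite cover of $K$ by neighborhoods $\{U_{p_i}\}_{i=1}^{N}$ produced by applying Lemma~\ref{handel} directly to $f$, and arranging each inductive step to secure the identity on a definite portion of one of these $U_{p_i}$, so that the process consumes the cover in at most $N$ steps.
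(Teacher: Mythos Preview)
Your approach is essentially the same as the paper's: the paper's ``proof'' is just the short paragraph preceding the corollary, which says to take a finite cover of $K$ by coordinate charts and apply Lemma~\ref{handel} repeatedly. You are simply filling in the details that the paper omits, and your outline does so correctly.

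Two remarks. First, you explicitly invoke total disconnectedness of $K$ for the disk-covering step; the corollary's stated hypotheses (compact, perfect) do not literally guarantee this, but the paper's standing context is that $K$ is a Cantor set, so this is the intended hypothesis and your reading is the right one. Second, the cleanest way to resolve your support/termination concern is to use total disconnectedness already in the first step: once you have the finite cover $\{U_{p_i}\}_{i=1}^{N}$ of $K$ coming from Lemma~\ref{handel} applied to $f$, refine it (using zero-dimensionality of $K$) to a cover of $K$ by pairwise \emph{disjoint} open sets $W_1,\ldots,W_M$, each contained in some $U_{p_{i(j)}}$. The convex isotopies on the $W_j$ then have pairwise disjoint supports and can be performed independently, so nothing is disturbed and the process terminates in exactly $M$ steps. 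This is slightly cleaner than the scheme you sketch in your final paragraph, but the idea is the same.
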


As we will show next, this corollary implies that if $f$ is distorted in $\mcg(S,K)$, then $f$ must be isotopic to a composition of Dehn twists about disjoint closed curves $\beta_i$ which do not meet $K$.\\

\subsection{Representatives of mapping classes} \label{reduction}

We use the following theorem due to Franks and Handel (see Theorem 1.2, Definition 6.1 and Lemma 6.3 in \cite{FH2}). This theorem is a direct consequence of Lemma \ref{handel} and of classical Nielsen-Thurston theory (see \cite{FLP} on Nielsen-Thurston theory). Given a diffeomorphism $f$ of the surface $S$, we denote by $\mathrm{Fix}(f)$ the set of points of $S$ which are fixed under $f$.

\begin{thm}[Franks-Handel] \label{FH}
Let $f$ be a diffeomorphism in $\mathrm{Diff}_{0}^{\infty}(S)$ and $M= S- \mathrm{Fix}(f)$. There exists a finite set $R$ of disjoint simple closed curves of $M$ which are pairwise non-isotopic and a diffeomorphism $\varphi$ of $S$ which is isotopic to $f$ relative to $\mathrm{Fix}(f)$ such that:
\begin{enumerate}
\item For any curve $\gamma$ in $R$, the homeomorphism $\varphi$ preserves an open annulus neighbourhood $A_{\gamma}$ of the curve $\gamma$.
\item For any connected component $S_{i}$ of $S-\cup_{\gamma}A_{\gamma}$, 
\begin{enumerate}
\item if $\mathrm{Fix}(f) \cap S_{i}$ is infinite, then $\varphi_{|S_{i}}=Id_{|S_{i}}$. 
\item if $\mathrm{Fix}(f) \cap S_{i}$ is finite, then either $\varphi_{|M_{i}}=Id_{|M_{i}}$ or $\varphi_{|M_{i}}$ is pseudo-Anosov, where $M_{i}=S_{i}-\mathrm{Fix}(f)$.
\end{enumerate}
\end{enumerate}
\end{thm}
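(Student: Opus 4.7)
The plan is to combine Lemma \ref{handel} with the classical Nielsen--Thurston decomposition theorem applied to a finite-type surface obtained from $S$. The point of Lemma \ref{handel} is precisely that the ``wild'' part of $\mathrm{Fix}(f)$ — its non-isolated part — can be pushed to look locally trivial by an isotopy relative to $\mathrm{Fix}(f)$, after which we are left with a finite-type mapping class problem to which Nielsen--Thurston applies.

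First, let $K \subset \mathrm{Fix}(f)$ be the set of accumulation points of $\mathrm{Fix}(f)$; it is compact. By Lemma \ref{handel}, for every $p \in K$ there is a coordinate neighborhood $U_{p}$ and an isotopy of $f$, relative to $\mathrm{Fix}(f)$, whose endpoint is the identity on $U_{p}$. Cover $K$ by finitely many such neighborhoods and patch the corresponding isotopies together one at a time, using the isotopy extension theorem to globalize each local isotopy while keeping $\mathrm{Fix}(f)$ fixed. The outcome is a diffeomorphism $f_{1}$, isotopic to $f$ relative to $\mathrm{Fix}(f)$, which is the identity on some open neighborhood $U$ of $K$. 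In particular, $f_{1}$ fixes every point of a small open disk around each non-isolated fixed point, so its fixed-point set in $S \setminus U$ consists of only the finitely many isolated fixed points of $f$ inside $S \setminus U$.

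Now consider the (possibly disconnected) compact surface with boundary $S' := S \setminus U$. On $S'$, the map $f_{1}$ is a diffeomorphism that is the identity on $\partial S'$ and has only finitely many interior fixed points; mark those points and regard $f_{1}$ as a mapping class of the finite-type surface $S'$ with marked points and with boundary fixed pointwise. Apply the classical Nielsen--Thurston theorem (see \cite{FLP}) to this mapping class: it produces a canonical reduction system $R$ of disjoint, pairwise non-isotopic simple closed curves in the interior of $S'$, and a canonical form $\varphi$, isotopic to $f_{1}$ relative to the marked points and to $\partial S'$, that preserves an open annular neighborhood $A_{\gamma}$ of each $\gamma \in R$ and is either periodic or pseudo-Anosov on each complementary piece. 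Because $\partial S'$ is fixed pointwise and the marked points are fixed, any periodic piece whose boundary meets $\partial S'$ or contains a marked point must be the identity (a periodic homeomorphism with that many fixed points is the identity by the Lefschetz/Brouwer-type rigidity in finite order). Extending $\varphi$ by the identity over $U$ gives a diffeomorphism of all of $S$, still denoted $\varphi$, isotopic to $f$ relative to $\mathrm{Fix}(f)$, which satisfies condition (1) and which on each complementary component $S_{i}$ is either the identity or pseudo-Anosov. Condition (2) is then just a bookkeeping statement: if $S_{i}$ meets $U$, i.e. $\mathrm{Fix}(f) \cap S_{i}$ is infinite, then $\varphi_{|S_{i}} = \Id$ by construction; otherwise $\mathrm{Fix}(f) \cap S_{i}$ is finite and $\varphi_{|M_{i}}$ is identity or pseudo-Anosov by Nielsen--Thurston.

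The step I expect to be most delicate is the smooth assembly at the interface between $U$ and $S'$: Nielsen--Thurston produces a canonical form only up to isotopy in the topological category, and we need the final $\varphi$ to be a genuine smooth diffeomorphism that agrees with the identity on $U$ and preserves honest smooth annuli $A_{\gamma}$. This is handled by smoothing the canonical form on each piece using standard surface smoothing (all maps in question fix boundaries pointwise, so one can smooth collar-by-collar), together with a careful choice of the annular neighborhoods $A_{\gamma}$ so that they avoid $U$ and have smooth boundary. Once these smoothness issues are arranged, the conclusion of the theorem is immediate from the Nielsen--Thurston classification applied to the finite-type surface $S'$.
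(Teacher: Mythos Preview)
The paper does not actually prove this theorem; it cites Franks--Handel \cite{FH2} and remarks only that it ``is a direct consequence of Lemma \ref{handel} and of classical Nielsen--Thurston theory.'' Your proposal fleshes out exactly that two-step sketch, so in spirit you are following the same line the paper indicates.

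There is, however, one genuine gap. In your penultimate paragraph you assert that on pieces with $\mathrm{Fix}(f)\cap S_i$ finite, Nielsen--Thurston gives ``identity or pseudo-Anosov''; but Nielsen--Thurston in general gives \emph{periodic} or pseudo-Anosov, and you have only argued that a periodic piece is forced to be the identity when it meets $\partial S'$ or contains a marked point. For a piece $S_i$ disjoint from $\mathrm{Fix}(f)$ and from $\partial S'$ (bounded entirely by reduction curves), nothing you have written rules out a nontrivial finite-order component. Ruling this out genuinely uses the hypothesis $f\in\mathrm{Diff}_0^\infty(S)$: loosely, a nontrivial periodic piece would rotate each of its boundary circles nontrivially, and this rotation would have to be absorbed by twisting in the adjacent annuli in a way that is incompatible with $\varphi$ being globally isotopic to the identity on $S$. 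That argument is part of the actual content of \cite{FH2} (this is why the paper cites Lemma 6.3 there, not just the Thurston classification), and your sketch does not supply it. Everything else in your outline --- the use of Lemma \ref{handel} to trivialize $f$ near the accumulation locus, the passage to a finite-type surface, and the smoothing at the interface --- is correct and is exactly what the paper has in mind.
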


We need more precisely the following corollary of this theorem.

\begin{cor}\label{FH2}
Let $\xi$ be an element in $\mcgi(S,K)$ which fixes a closed set $K$ pointwise. There exists a finite set $R$ of disjoint simple closed curves of $M$ which are pairwise non-isotopic and a diffeomorphism $\psi$ of $S$ which is a representative of $\xi$ such that: 
\begin{enumerate}
\item For any curve $\gamma$ in $R$, the diffeomorphism $\psi$ preserves an open annulus neighbourhood $A_{\gamma}$ of the curve $\gamma$.
\item For any connected component $S_{i}$ of $S-\cup_{\gamma}A_{\gamma}$, 
\begin{enumerate}
\item if $K \cap S_{i}$ is infinite, then $\psi_{|S_{i}}=Id_{|S_{i}}$. 
\item if $K \cap S_{i}$ is finite, then either $\psi_{|M_{i}}=Id_{|M_{i}}$ or $\psi_{|M_{i}}$ is pseudo-Anosov, where $M_{i}=S_{i}-K$.
\end{enumerate}
\end{enumerate}
\end{cor}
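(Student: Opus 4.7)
My plan is to deduce the corollary by applying Theorem \ref{FH} to a suitably chosen representative of $\xi$ and translating the conclusion, using crucially that the Cantor set $K$ is perfect.

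First I would use Corollary \ref{local} to choose a representative $f \in \mathrm{Diff}_{0}^{\infty}(S)$ of $\xi$ which is the identity on a finite union of disjoint smooth closed disks $D_{1},\dots,D_{n}$ whose interiors cover $K$. Writing $U := \bigcup_{i} \mathrm{int}(D_{i})$, we have $K \subset U \subset \mathrm{Fix}(f)$. Theorem \ref{FH} applied to $f$ produces a finite pairwise non-isotopic family $R$ of simple closed curves in $S-\mathrm{Fix}(f)$, together with invariant open annulus neighbourhoods $A_{\gamma}$ and a diffeomorphism $\varphi$ isotopic to $f$ relative to $\mathrm{Fix}(f)$ satisfying its conclusions. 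Since $R \subset S-U$, after slightly shrinking the annuli I may assume $A_{\gamma}\cap U = \emptyset$, so the annuli are disjoint from $K$. Because the isotopy from $f$ to $\varphi$ fixes $\mathrm{Fix}(f)\supseteq K$ pointwise, $\psi := \varphi$ is a representative of $\xi$ in $\mcg(S,K)$, and condition~(1) of the corollary is immediate from condition~(1) of Theorem \ref{FH}.

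For condition~(2) I would dichotomise each component $S_{i}$ of $S - \bigcup_{\gamma} A_{\gamma}$ using the perfectness of $K$. Since $U$ is open and disjoint from all the annuli, either $S_{i} \cap U$ is non-empty --- in which case $K \cap S_{i}$ is infinite (no point of $K$ is isolated) and $\mathrm{Fix}(f) \cap S_{i}$ contains the open set $S_{i} \cap U$, so Theorem \ref{FH}(2a) gives $\psi|_{S_{i}} = \mathrm{Id}$, matching conclusion~(2a) --- or $S_{i} \cap U = \emptyset$, in which case $K \cap S_{i} = \emptyset$ and $M_{i} := S_{i} - K = S_{i}$; here Theorem \ref{FH}(2b) says that $\psi|_{S_{i} - \mathrm{Fix}(f)}$ is either the identity or pseudo-Anosov.

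The step I expect to require the most care is promoting, in this last situation, the behaviour on $S_{i} - \mathrm{Fix}(f)$ to the corresponding behaviour on $M_{i} = S_{i}$. The identity alternative extends by continuity because $\mathrm{Fix}(f) \cap S_{i}$ is a finite set. For the pseudo-Anosov alternative, one must verify that filling in the points of $(\mathrm{Fix}(f) \setminus K) \cap S_{i}$ does not create forbidden interior $1$-prong singularities in the invariant foliations; I would resolve this by enlarging $R$ with a small simple closed curve of $S - \mathrm{Fix}(f)$ around each such offending fixed point --- bounding a disk containing only that point --- thereby isolating it in its own trivial component of the decomposition and leaving the rest of $S_{i}$ as a genuine pseudo-Anosov piece. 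Pairwise non-isotopy of the enlarged family and invariance of the new annuli under $\psi$ are straightforward to check, and the resulting pair $(R,\psi)$ then satisfies all the conclusions of the corollary.
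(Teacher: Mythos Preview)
Your strategy differs from the paper's, and the difference matters in exactly the spot you flagged as delicate.  The paper does not try to pass from the pseudo-Anosov on $S_i-\mathrm{Fix}(f)$ to one on $M_i=S_i-K$ by filling in the extra fixed points and patching $1$-prong singularities.  Instead, once Theorem~\ref{FH} has produced the first-level decomposition, on every piece $S_i$ with $K\cap S_i$ finite the paper simply \emph{re-applies classical Nielsen--Thurston theory} to $\varphi_{|S_i}$ relative to the finite set $K\cap S_i$.  This second application directly yields a refinement of $R$ inside $S_i$, invariant annuli for the new reducing curves, and a representative $\psi_{|S_i}$ satisfying (b) on each sub-piece.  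No analysis of prong numbers or ad-hoc curve additions is needed.

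Your proposed fix---enclosing each offending $1$-prong fixed point by a small curve and declaring the remainder a ``genuine pseudo-Anosov piece''---does not go through as written.  A pseudo-Anosov representative does not preserve any annular neighbourhood of such a curve, so condition~(1) fails for the enlarged $R$ unless you modify $\psi$ further; but once you modify $\psi$ near $p$ you must re-establish the pseudo-Anosov structure on the complement, which is essentially rebuilding the Nielsen--Thurston representative by hand.  Moreover, you implicitly assume that filling in all the $\geq 2$-prong punctures always yields a pseudo-Anosov on the remaining surface; in general, forgetting punctures can change the Thurston type (it may become reducible or periodic), and your argument gives no mechanism to detect or handle this.  The paper's route---invoke Nielsen--Thurston on $M_i$ itself---sidesteps all of these issues in one stroke.

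A minor further point: your argument uses Corollary~\ref{local}, hence needs $K$ perfect, whereas Corollary~\ref{FH2} is stated for an arbitrary closed $K$; the paper's proof does not require perfectness.
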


\begin{proof}
Apply Theorem \ref{FH} to a representative of $\xi$. This provides a diffeomorphism $\varphi$ with the properties given by the theorem. When $K \cap S_{i}$ is infinite, the theorem states that $\varphi_{|S_{i}}=Id_{|S_{i}}$ and we take $\psi_{|S_{i}}=Id_{|S_{i}}$. If the set $K \cap S_{i}$ is finite, we can apply the classical Nielsen-Thurston theory (see Theorem 5 p.12 in \cite{FLP}) to $\varphi_{|S_{i}}$ to obtain a decomposition of $S_{i}$ and a diffeomorphism $\psi_{|S_{i}}$ whose restriction to each component of this decomposition satisfies (b).
\end{proof}

Notice that, in the case where $K$ is a Cantor set, Case (b) in Corollary \ref{FH2} can happen only if $K$ does not meet $S_{i}$. 

Let $\xi$ be an element in $\mcgi(S,K)$ and denote by $\psi$ its representative in the group $\mathrm{Diff}_{0}^{\infty}(S)$ given by the above corollary. Here, we distinguish three cases to prove Theorem \ref{main2}.
\begin{enumerate}
\item There exists a connected component $S_{i}$ for which Case (b) occurs and $\psi_{|M_{i}}$ is pseudo-Anosov.
\item The diffeomorphism $\psi$ is a composition of Dehn twists about curves of $R$ and one of the curves of $R$ is not nullhomotopic in $S$.
\item The diffeomorphism $\psi$ is a composition of Dehn twists about curves of $R$ which are nullhomotopic in $S$.
\end{enumerate}
The first case is adressed in Subsection \ref{pseudoanosov}, the second one in Subsection \ref{alibegovic} and the third one in Subsection \ref{Dehntwist}. 

\subsection{The pseudo-Anosov case} \label{pseudoanosov}

In this subsection, we need the following obstruction to being a distorted element. 

Endow the surface $S$ with a Riemannian metric $g$. For a loop $\alpha$ of $S$, we denote by $l_{g}(\alpha)$ its length with respect to the chosen Riemannian metric and, for an isotopy class $[\alpha]$ of loops of $S-K$, we denote by $l_{g}([\alpha])$ the infimum of the lengths of loops representing $[\alpha]$.

\begin{lem}\label{lengthcurves}
Let $\xi$ be an element of $\mcg(S,K)$ and $\alpha$ be a smooth loop of $S-K$. Suppose that $\xi$ is distorted in $\mcg(S,K)$. Then
$$ \lim_{n \rightarrow + \infty} \frac{\mathrm{log}(l_{g}(\xi^{n}([\alpha])))}{n}=0.$$
\end{lem}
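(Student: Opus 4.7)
The plan is to exploit the fact that any smooth diffeomorphism representative of a mapping class is Lipschitz with respect to $g$, and that the Lipschitz constant is multiplicative under composition, so that the sub-linear word-length growth of $\xi^{n}$ forces sub-exponential length growth of the iterates of $[\alpha]$. This is the same principle that underlies the Franks--Handel and Alibegovic obstructions to distortion already cited in the introduction.

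First I would fix a finite symmetric generating set $\mathcal{G} = \{s_{1}, \dots, s_{k}\} \subset \mcg(S,K)$ witnessing the distortion of $\xi$, so that $\xi \in \langle \mathcal{G} \rangle$ and $l_{\mathcal{G}}(\xi^{n})/n \to 0$. For each $s_{i}$ I would choose a smooth representative $\widetilde{s}_{i} \in \D(S,K)$ and set
\[
C := \max_{1 \leq i \leq k}\, \sup_{x \in S} \|D_{x}\widetilde{s}_{i}\|_{g},
\]
which is finite because $S$ is compact and each $\widetilde{s}_{i}$ is smooth. Then each $\widetilde{s}_{i}$ is $C$-Lipschitz for $g$, so that for every smooth loop $\beta$ of $S$ one has $l_{g}(\widetilde{s}_{i}(\beta)) \leq C\, l_{g}(\beta)$.

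Next, writing $\xi^{n} = w^{(n)}_{1} \cdots w^{(n)}_{m_{n}}$ as a geodesic word in $\mathcal{G}$ (with $m_{n} = l_{\mathcal{G}}(\xi^{n})$), the composition $\widetilde{w}^{(n)}_{1} \circ \cdots \circ \widetilde{w}^{(n)}_{m_{n}} \in \D(S,K)$ is a diffeomorphism representative of $\xi^{n}$, and its image of $\alpha$ is a loop representing the isotopy class $\xi^{n}([\alpha])$. Iterating the Lipschitz bound $m_{n}$ times gives
\[
l_{g}(\xi^{n}([\alpha])) \leq C^{m_{n}}\, l_{g}(\alpha),
\]
so that $\frac{\log l_{g}(\xi^{n}([\alpha]))}{n} \leq \frac{m_{n}\log C + \log l_{g}(\alpha)}{n} \longrightarrow 0$ by the distortion hypothesis. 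For the matching lower bound I would apply the same inequality to $\xi^{-n}$ acting on the loop $\xi^{n}([\alpha])$: since $\mathcal{G}$ is symmetric, $l_{\mathcal{G}}(\xi^{-n}) = m_{n}$, and $[\alpha] = \xi^{-n}(\xi^{n}([\alpha]))$, whence $l_{g}(\alpha) \leq C^{m_{n}}\, l_{g}(\xi^{n}([\alpha]))$ yields $\frac{\log l_{g}(\xi^{n}([\alpha]))}{n} \geq \frac{\log l_{g}(\alpha) - m_{n}\log C}{n} \longrightarrow 0$.

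This is a routine Lipschitz-versus-word-length estimate and I do not expect any genuine obstacle; the only non-trivial point is the choice of \emph{smooth} representatives, which is precisely what guarantees a finite uniform Lipschitz constant --- a purely $C^{0}$ version of the statement would already require more care. Implicit in the formulation is that $\alpha$ be essential in $S-K$ (so that $l_{g}([\alpha]) > 0$ and the logarithms are finite); otherwise $l_{g}(\xi^{n}([\alpha])) = 0$ for every $n$ and the assertion is vacuous.
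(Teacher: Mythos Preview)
Your proof is correct and follows exactly the paper's approach: choose smooth representatives of a finite generating set witnessing the distortion, bound their derivatives uniformly by a single constant $C$ using compactness of $S$, and deduce $l_{g}(\xi^{n}([\alpha])) \leq C^{m_{n}} l_{g}(\alpha)$, whence the $\limsup$ is $\leq 0$. The paper actually records only this upper bound (which is all that is used in the pseudo-Anosov application); your additional lower-bound argument is a welcome extra, though note that the inequality you obtain from $\xi^{-n}$ has the \emph{infimal} length $l_{g}([\alpha])$ on the left rather than $l_{g}(\alpha)$, so the finiteness caveat you raise is precisely about $l_{g}([\alpha])>0$.
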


\begin{proof}
By definition of a distorted element, there exists a finite set $\mathcal{G} \subset \mcg(S,K)$ and a sequence $(l_{n})_{n}$ of integers such that
\begin{enumerate}
\item for any $n$, $\xi^{n}=\eta_{1,n}\eta_{2,n}\eta_{3,n} \ldots \eta_{l_{n},n}$, where, for any indices $i$ and $j$, $\eta_{i,j} \in \mathcal{G}$.
\item $\lim_{n \rightarrow + \infty} \frac{l_{n}}{n}=0$.\\
\end{enumerate}

For any element $\eta$ in $\mathcal{G}$, choose a representative $g_{\eta}$ of this element in $\mathrm{Diff}^{\infty}(S)$. Also take a representative $\alpha$ of the isotopy class $[\alpha]$. Denote $M= \max_{\eta \in \mathcal{G}, x\in S} \left\|Dg_{\eta}(x) \right\|$ where $\left\|. \right\|$ is the norm associated to the chosen Riemannian metric on $S$. Then, for any $n$,
$$\begin{array}{rcl}
l(\xi^{n}([\alpha])) & \leq & l(g_{\eta_{1,n}}g_{\eta_{2,n}}g_{\eta_{3,n}} \ldots g_{\eta_{l_{n},n}}(\alpha)) \\
 & \leq & M^{l_{n}}l(\alpha).
\end{array}
$$
Hence $\frac{\mathrm{log}(l(\xi^{n}([\alpha])))}{n}\leq \frac{l_{n} \mathrm{log}(M)+\mathrm{log}(l(\alpha))}{n}$. This inequality yields the conclusion of the lemma.
\end{proof}

\begin{proof}[Proof of Theorem \ref{main2} : case of a class with a pseudo-Anosov component]
Suppose that there exists a connected component $S_{i}$ for which Case (b) in Corollary \ref{FH2} occurs. In this case, it is a classical fact that there exists an isotopy class of essential loop $[\alpha]$ such that $\lim_{n}\frac{\mathrm{log}(l(\xi^{n}([\alpha])))}{n}>0$ (see Proposition 19 p.178 in \cite{FLP}). Hence, by Lemma \ref{lengthcurves}, the element $\xi$ is undistorted in $\mcg(S,K)$.
\end{proof}

\subsection{Case of a Dehn twist about an essential curve} \label{alibegovic}

In this subsection, we show that any element in $\pmcg(S,K)$ whose normal form contains a Dehn twist about a curve which is essential in $S$ is undistorted in $\mcg(S,K)$. To prove this, we will show something slightly stronger in the $C^{0}$-topology using the method developed by Alibegovic in \cite{Ali} to show there are no distorted elements in $Out(F_n)$.\\

For a Cantor set $K$ in a surface $S$, let $\homeo(S,K)$ be the group of homeomorphisms of $S$ that preserve $K$ and $\homeo_0(S,K)$ be the identity component of $\homeo(S,K)$. We define the topological mapping class group:
$$\mcgt(S,K) = \homeo(S,K)/ \homeo_0(S,K).$$ As every Cantor set $K \subset S$ is homeomorphic to the ternary Cantor set $K'$ via a homeomorphism of $S$, we assume throughout this section that $K$ is the ternary Cantor set in $S$.\\

Take a disk $D\subset S$ containing $K$ in its interior and think of $D$ as a disk in $\R^2$ containing the ternary Cantor set $K$. Fix a point $p_{0}$ on the boundary of this disk. For any $n \geq 1$ and any sequence $I \in \{0,1\}^n$, we define the disk $D_I$ as an open disk containing exactly the points in the ternary cantor set  $K \subset \R^2$ whose tryadic expansion starts by $2I$. We choose those disks so that their boundaries are pairwise disjoint. In particular, if $n$ is fixed and $I$ and $J$ belong to $\{0,1\}^n$, the disks $D_{I}$ and $D_{J}$ are disjoint. Also, choose those disks in such a manner that the diameter of $D_{I}$ tends to $0$ when the length of $I$ tends to $+\infty$. \\

We define $S_n$ as the surface with boundary $$S_n :=  S \setminus \bigcup_{I \in  \{0,1\}^n} D_{I}$$
and 
$$U_{n}= \bigcup_{I \in \left\{ 0, 1 \right\}^{n}} D_{I}.$$
See Figure  \ref{s2}.

\begin{figure}[ht]
\begin{center}
\includegraphics[scale=0.4]{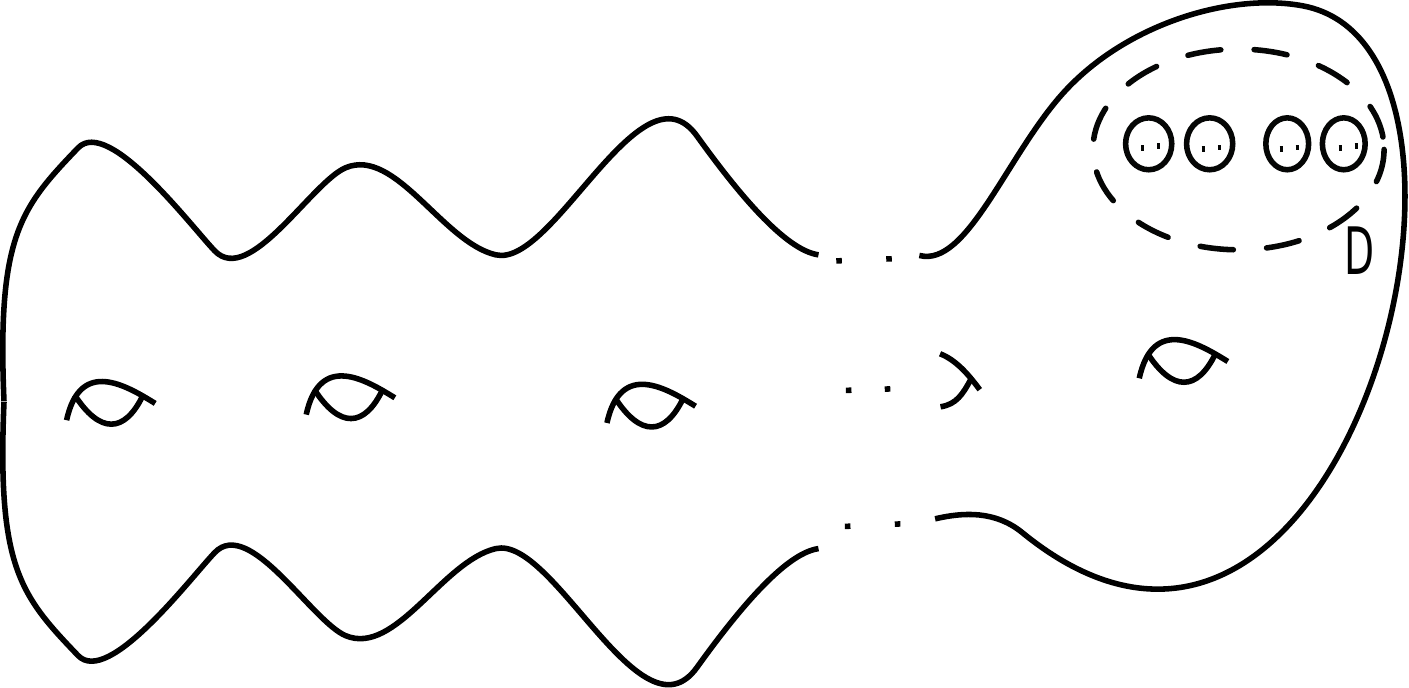}
\end{center}
\caption{The surface $S_2$}
\label{s2}
\end{figure}

The surfaces $S_n$ exhaust $S$ and have a partial order given by the containment relation, that is, for this order, $S_n < S_m$ if $S_n \subset S_m$. Observe that the group $G := \pi_1(S\setminus K, p_0)$ is the direct limit of the family of groups $(G_n, f_{n,m})$, where $G_n := \pi_1(S_n, p_0)$ and the group morphisms $ f_{n,m}: G_n \to G_m$ are obtained from the inclusions $S_n\to S_m$. Observe also that each group $G_{n}$ is a non-abelian free group.\\

\begin{figure}[ht]
\begin{center}
\includegraphics[scale=0.5]{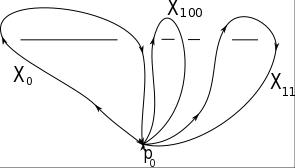}
\end{center}
\caption{The loops $x_0$, $x_{100}$ and $x_{11}$.}
\label{loops}
\end{figure}

For any sequence $I \in \{0,1\}^n$,  we consider the loop $x_I$ based at $p_0$ as in Figure \ref{loops} (it bounds a disk whose intersection with $K$ consists of elements of $K$ whose tryadic expansion starts with $2I$). We also take a standard generating set $A$ of $\pi_1(S \setminus D, p_0)$. In particular, the group $G_{0}=\pi_{1}(S-D,p_{0})$ is the free group on elements of $A$. See Figure \ref{loops2}.

\begin{figure}[ht]
\begin{center}
\includegraphics[scale=0.3]{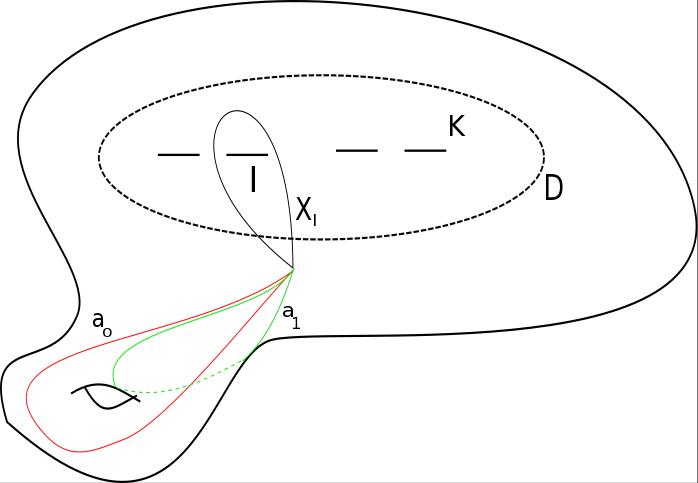}
\end{center}
\caption{$S = \mathbb{T}^2$ and $A = \{a_0.a_1\}$.}
\label{loops2}
\end{figure}

%%As in Alibegovic's paper we will define first for each loop $[w]$ in $S \setminus K$, a function $c([w])$ that we will prove satisfies that $c(g([w])) \geq c([w]) + C_g$ for every mapping class group  $g \in \mcg(S,K)$.\\

Observe that for our elements $x_I$ in $\pi_1(S \setminus K, p_0)$, the following equality holds $$x_{I} = x_{I0}x_{I1}$$

For each surface $S_n$, we consider the set of generators $X_n \cup  A$ of the fundamental group $G_n = \pi_1(S_n, p_0)$, where $$X_n = \{ x_{I} \text{ such that } I \in \bigcup_{k \leq n}{\{0,1\}}^{k} \text{ and } \text{ the last element of} \ I \ \text{is} \ 0 \}$$ Any element in $G_n$ can be written uniquely in a reduced way using these generators of $G_n$. As usual, we will identify elements of $G_{n}$ with reduced words on the elements of $A \cup X_{n}$. For any $g$ in $G_{n}$, we denote by $l_{n}(g)$ the word length of $g$ with respect to this generating set. \\

If $n < m$, recall that we have a natural inclusion $f_{n,m}: G_{n} \rightarrow G_{m}$. Observe that, for any $g \in G_{n}$, $l_{n}(g)=l_{m}(f_{n,m}(g))$ by our choice of generating set. Hence we have a well defined length $l$ on $G$, which is the direct limit of the $G_{n}$'s. Here is another way to see $l$. Observe that $G$ is the free group on the infinite set $A \cup X$, where
$$ X = \bigcup_{n \in \mathbb{N}} X_{n}.$$
The map $l: G \rightarrow \mathbb{R}_{+}$ is just the word length on $G$ with respect to this generating set.

We define the following:

\begin{defn} For each $w \in G=\pi_{1}(S\setminus K, p_0)$, we can write $w$ in a unique way using the generators $X \cup A$ of $G$. We define $c(w)$ for a reduced word $w$ as the maximal $k$ such that $$w = w_0\bar{w}^kw_1$$  where $\bar{w}$ is a word that represents a non-trivial element in $\pi_1(S)$. If no such decomposition exists, we define $c(w) = 0$.
\end{defn}

\begin{defn} For an isotopy class $[w]$ of loop in $S \setminus K$, we define $c([w])$ as the maximal $c(w')$ such that $w'$ is a cyclically reduced word representing $[w]$.
\end{defn}

For example, consider the $2$-torus $\mathbb{T}^2$ in Figure \ref{loops2}. The chosen generators of $\pi_{1}(S_n,p_{0})$ are $A = \{a_0,a_1\}$ together with the set of loops $X_n$. In that case, if $$w = (a_0x_0)^2x_0^5({a_0}a_1x_0{a_0}^{-1}{a_1}^{-1})^7$$ then $c(w) = 2$, because of the $(a_0x_0)^2$ factor. If $S$ is the $2$-sphere $\s^2$, then, for any word $w$ we have  $c(w) = 0$. Hence we will assume from now on that $g(S) \geq 1$.\\

We are in position to prove the main two propositions of this subsection.

\begin{prop}\label{disk} For any  $g \in \text{Homeo}(S,K)$ supported in $D$ and any isotopy class $[w]$ of loop in $S \setminus K$:
$$c(g([w])) \leq c([w]) + 2.$$
\end{prop}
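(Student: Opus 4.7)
The plan is to exploit the structure of the induced automorphism $g_{*}$ on $G = \pi_{1}(S \setminus K, p_{0})$ coming from the hypothesis that $g$ is supported in $D$. Since $g$ is the identity outside $D$, it fixes $p_{0} \in \partial D$, and $g_{*}$ has two key properties: $g_{*}(a) = a$ for every $a \in A$ (such an $a$ being representable by a loop in $S \setminus D$), and $g_{*}(x_{I}) \in \pi_{1}(D \setminus K, p_{0})$ for every $x_{I} \in X$, so that its reduced $G$-expression uses only letters of $X^{\pm 1}$. As a consequence, $g_{*}$ descends to the identity on the quotient $\pi_{1}(S, p_{0})$.

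I would then pick a cyclically reduced representative $v$ of $g_{*}([w])$ achieving $c(v) = c(g([w]))$, together with a reduced factorization $v = v_{0} \bar{v}^{k} v_{1}$ with $\bar{v}$ non-trivial in $\pi_{1}(S)$, and apply $g_{*}^{-1}$ to obtain
\[
g_{*}^{-1}(v) \;=\; g_{*}^{-1}(v_{0}) \, g_{*}^{-1}(\bar{v})^{k} \, g_{*}^{-1}(v_{1}),
\]
which represents the same conjugacy class in $G$ as $w$. Let $\bar{u}$ be the cyclically reduced form of $g_{*}^{-1}(\bar{v})$. Since $g_{*}$ is trivial on $\pi_{1}(S)$, the element $\bar{u}$ still represents a non-trivial element of $\pi_{1}(S)$ (up to conjugation), and because $\bar{u}$ is cyclically reduced, the word $\bar{u}^{k}$ is already freely reduced. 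The goal is then to show that after free and cyclic reduction of the whole expression, one obtains a cyclically reduced representative of $[w]$ that still contains $\bar{u}^{k-2}$ as a reduced subword; this would give $c([w]) \geq k - 2 = c(g([w])) - 2$ and conclude the proof.

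The hard part will be the bounded-cancellation step of the previous paragraph. A priori, the word $g_{*}^{-1}(v_{0})$ could end in a long power of $\bar{u}^{-1}$, allowing the free reduction to cascade through several periods of $\bar{u}^{k}$. To rule this out (or at least bound the loss by $2$), I would use that $\bar{u}$ is cyclically reduced (so cancellation between a partial period and the following full periods cannot ``restart'' without matching a specific pattern) and the separation between $A$- and $X$-structures induced by $g_{*}$ (so that reduction cannot cross between the two types of syllables in the middle of the $\bar{u}^{k}$ block). A careful case analysis at the two boundaries of the periodic block---with the cyclic reduction folded into one of the boundaries---should yield the universal bound of ``at most $2$ periods lost''.
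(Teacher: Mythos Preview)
Your structural claim about $g_{*}$ is not quite right, and the error propagates to the cancellation step. You assert that $g_{*}(x_{I}) \in \pi_{1}(D\setminus K, p_{0})$ implies its reduced expression in $G$ uses only letters of $X^{\pm 1}$. But the inclusion $\pi_{1}(D\setminus K,p_{0})\hookrightarrow G$ does \emph{not} land in the subgroup generated by $X$: the loop $\partial D$ lies in $D\setminus K$, and in the generating set $A\cup X$ it is the word $c$, a product of commutators of elements of $A$. Concretely, $x_{1}=x_{0}^{-1}c$, so e.g.\ a half-twist in $D$ swapping $D_{0}$ and $D_{1}$ sends $x_{0}$ to a word genuinely involving $A$-letters. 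Thus the clean ``$A$-syllables vs.\ $X$-syllables'' separation you invoke to control cancellation does not exist.

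This is exactly the subtlety the paper's proof is built around. The correct dichotomy is not $A$ vs.\ $X$ but rather ``words in $X\cup\{c\}$'' vs.\ ``words in $A$ not beginning or ending in a power of $c$''. One rewrites $w=\prod w_{i}(x,c)\,a_{i}$ accordingly; then $g$ fixes each $a_{i}$ and sends each $w_{i}(x,c)$ to another $(x,c)$-word. Since $\bar w$ is nontrivial in $\pi_{1}(S)$, it must contain some $a_{i}=\alpha$ as a subword, and these $\alpha$'s act as markers: under $g$ they survive, and cancellation with the neighbouring $(x,c)$-blocks can shorten but never annihilate them. Only the two extreme markers (at the ends of the $\bar w^{m}$ block) can be disturbed by the flanking words $w_{1},w_{2}$, which gives the constant $2$. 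Your final paragraph gestures at a case analysis but, resting on the false $A$/$X$ split, gives no mechanism that would force the loss to be bounded independently of $g$; without the $c$-refinement there is no reason the bound should be $2$ rather than something like $\Lambda(g_{*})^{2}$.
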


\begin{proof}
Let $w$ be a cyclically reduced word representing a loop $[w]$ in $S \setminus K$ and let $S_n$ be a surface containing the loop $w$. We will begin the proof by making some observations on how such a $g$ supported in $D$ can act on words in the generators $X_n \cup A$ of $G$. Let $c$ be the word in $A$ corresponding to the loop surrounding the disk $D$ (a product of commutators of elements of $A$). Observe that the group $\pi_1(D,K)$ is generated by $X_n \cup \{c\}$ and in fact any element  in $\pi_1(D,K)$ can be written in a unique way as a word of the form $w(x,c)$. Observe that, if  $g \in \text{Homeo}(D,K)$, then, for any word $a \in \langle A \rangle$, we have $g(a) = a$. For a word $w(x,c)$ in $X_n \cup c$, the element $g(w(x,c))$ is a word $w'(x,c)$ in $X_l \cup c$ for some $l$ possibly larger than $n$.\\

If $w = \prod x_ja_j$, where the $x_j$'s are words in $X_n$ and the $a_j$ are words in $A$, we can rewrite $w$ if some of the $a_j's$ are powers of $c$ or if the $a_j$'s begin or end by powers of $c$. We write $w$ as a word $$w = \prod^{m}_{i=1} w_i(x,c)a_i$$ where the $a_i$'s are words in $\langle A \rangle$ such that the beginning and ending of the $a_i$'s are not powers of $c$. Applying $g$, we obtain $$g(w) = \prod_{i=1}^m g(w_i(x,c))a_i.$$ 

If $w_i'(x,c)$ is the reduced word in $X_l \cup c$ which represents $g(w_i(x,c))$, we can write $g(w) = \prod_{i=1}^m w'_i(x,c)a_i$. The word $\prod_{i=1}^m w'_i(x,c)a_i$ might not be reduced as $w'_i(x,c)$ might end or begin with powers of $c$ and some cancellations might occur at the beginning and ending of these $a_i$'s. But, after performing these cancellations, we will not be able to cancel completely any of the words $a_i$, as by assumption the $a_i$'s are not powers of $c$. In this way, we obtain a reduced word for $g(w)$ of the form $$g(w) = \prod_{i=1}^m w''_i(x,c)a'_i$$ where the words $a'_i$'s neither begin nor end with powers of $c$ and the words $w''_i(x,c)$ and $a_i'$ are non-trivial (except possibly for $a''_m$ and $w''_1(x,c)$).\\

We can now begin the actual proof of our proposition. We take a cyclically reduced word $w = C\bar{w}^mD$, such that $c([w]) = c(w) = m$ and $\bar{w}$ represents a non-trivial element in $\pi_1(S)$. If we write as before $w =\prod_{i=1}^m w_i(x,c)a_i$, where the $a_i's$ are words in $\langle A \rangle$ which neither begin nor end with powers of $c$, then, looking at the first  time $\bar{w}$ appears as a subword of the word  $\prod_{i=1}^k w(x_i,c_i)a_i$, there should be at least one $a_i$  that appears in $\bar{w}$, because $\bar{w}$ is non-trivial in $\pi_1(S)$. Let us denote this $a_i$ by  $\alpha$. We call by  $\alpha_j$, the corresponding  $\alpha$ in the $j$-th time $\hat{w}$ appears in $w$ from left to right.\\

Therefore, we can write our word $w=w_1 \big( \prod_{j=1}^{m-1}\alpha w_0 \big) \alpha w_2$, where $w_0$ is a word that begins and ends in the alphabet $X_n \cup C$. Also $w_1$ ($w_2$ respectively) is a word that ends (begins respectively) in a word of the alphabet $X_n \cup C$. Applying $g$, we get: $$g(w) = g(w_1) \bigg( \prod_{j=1}^{m-1}\alpha g(w_0) \bigg) \alpha g(w_2)$$

Let $w'_i$ be the reduced word which represents $g(w_i)$ in $X_l \cup \{c\}$. Observe that $w'_0$ begins and ends with words in the alphabet $X_l \cup C$ (Same thing for the end of $w'_1$ and the beginning of $w'_2$). At this point, some cancellations might occur at the beginning or the end of the $\alpha$'s but, for the $\alpha$'s in the word $g(w)$ that have the subword $g(w_0)$ to the left and to the right, the same cancellations should occur and so after applying these cancellations we obtain $\alpha'$, $w''_i$'s and a reduced word for $g(w)$ of the form: $$g(w) = w''_1 \bigg(\prod_{j=2}^{m-1} \alpha' w''_0 \bigg) w''_2.$$

Moreover, $\alpha'w_{0}''$, which is conjugate to $\alpha g(w_{0})$, is a nontrivial word in $A$ representing a non-trivial class in $\pi_1(S)$. We can see from there that $$c([g(w)]) \geq c([w]) - 2.$$

Applying this to $g^{-1}$ and the word $g(w)$ we obtain $$c(g(w)) \leq c(w) + 2.$$

\end{proof}

Let us define $\pmcgt(S,K)$ as the set of mapping classes in $\mcgt(S,K)$ represented by  a homeomorphism  which pointwise fixes a small neighborhood $N_\epsilon(K)$ of $K$. For any mapping class $g$ in $\pmcgt(S,K)$, we denote by $N(g)$ the smallest integer $n$ such that $g$ has a representative which pointwise fixes $U_{n}$. Finally, for any mapping class $g$ we fix a homeomorphism $\hat{g}$ which represents $g$, which fixes the point $p_{0}$ and which pointwise fixes $U_{N(g)}$. 

\begin{prop}\label{fix} For any $g \in \pmcgt(S,K)$, there exists a constant $C_g$ such that for every isotopy class $[w]$ of loop in $S\setminus K$, the following holds:
$$c(g([w])) \leq c([w]) + C_g.$$
\end{prop}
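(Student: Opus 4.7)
The plan is to adapt the argument of Proposition \ref{disk} while exploiting the structural constraints on the induced automorphism $\hat{g}_{\ast}$ of $G = \pi_1(S \setminus K, p_0)$ coming from the hypothesis that $\hat{g}$ pointwise fixes $U_n$, where $n = N(g)$. Two structural facts will drive the argument. First, because $\hat{g}(S_n) = S_n$ and $\hat{g}$ fixes the basepoint $p_0$, the automorphism $\hat{g}_{\ast}$ restricts to an automorphism of the finite-rank free subgroup $G_n = \pi_1(S_n, p_0)$, so there is a constant $M = M(g)$ such that $\hat{g}_{\ast}(s)$ has reduced word length at most $M$ in the alphabet $A \cup X_n$ for every $s \in A \cup X_n$. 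Second, for every $x_I$ with $|I| > n$ we have $\hat{g}_{\ast}(x_I) = \gamma_J x_I \gamma_J^{-1}$, where $J$ is the length-$n$ prefix of $I$ and $\gamma_J \in G_n$ is an element depending only on $J$ and $\hat{g}$; this comes from $\hat{g}|_{D_J} = \mathrm{Id}$, so the only effect of $\hat{g}$ on a loop penetrating $D_J$ is the change in the path from $p_0$ to $\partial D_J$.

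With this in hand, I would follow the skeleton of Proposition \ref{disk}. Take a cyclically reduced representative $w$ of $[w]$ with $c(w) = c([w]) = k$ and a decomposition $w = w_1 \bigl(\prod_{j=1}^{k-1} \alpha w_0\bigr) \alpha w_2$, where $\alpha \in \langle A \rangle$ appears inside the iterated sub-word $\bar{w}$ (necessarily so, since $\bar{w}$ is non-trivial in $\pi_1(S)$), the word $\alpha$ does not begin or end in a power of $c$, and $w_0$ begins and ends with letters from $X \cup \{c\}$. Applying $\hat{g}_{\ast}$ gives $\hat{g}_{\ast}(w) = \hat{g}_{\ast}(w_1) \bigl(\prod_{j=1}^{k-1} \hat{g}_{\ast}(\alpha) \hat{g}_{\ast}(w_0)\bigr) \hat{g}_{\ast}(\alpha) \hat{g}_{\ast}(w_2)$. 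Because the projection $G \to \pi_1(S)$ kills every $x_I$ and the induced map $\bar{g}_{\ast}$ on $\pi_1(S)$ is an automorphism, each block $\hat{g}_{\ast}(\alpha w_0)$ remains non-trivial in $\pi_1(S)$; moreover, after reduction inside a single block its initial and terminal segments have length bounded by a constant depending only on $M$ and on $\max_{|J|=n} |\gamma_J|$, hence only on $g$.

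When the product of blocks is globally reduced, cancellations at the interfaces between adjacent blocks are confined to those bounded initial and terminal segments, so at least $k - C$ full blocks survive in the reduced word, each contributing a sub-word non-trivial in $\pi_1(S)$, for some $C = C(g)$. This yields $c(\hat{g}_{\ast}([w])) \geq k - C$. Since $g^{-1}$ also lies in $\pmcgt(S,K)$, applying the same inequality to $g^{-1}$ and the loop $\hat{g}_{\ast}([w])$ produces $c([w]) \geq c(\hat{g}_{\ast}([w])) - C(g^{-1})$, which rearranges to the desired bound $c(\hat{g}_{\ast}([w])) \leq c([w]) + C_g$ with $C_g = C(g^{-1})$.

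I expect the main obstacle to be the control of cancellations at the interfaces between consecutive blocks $\hat{g}_{\ast}(\alpha w_0)$. In Proposition \ref{disk} the identity $g(\alpha) = \alpha$ furnished a rigid, uncancellable separator between repetitions; here $\hat{g}_{\ast}(\alpha) \neq \alpha$ in general, so one must identify a protected \emph{core} inside the reduced word representing $\hat{g}_{\ast}(\alpha w_0)$ — a sub-word of $g$-bounded length whose image in $\pi_1(S)$ is non-trivial — that cannot be consumed by cancellation with either neighbor. The bookkeeping is slightly more intricate than in Proposition \ref{disk} because one must track cancellation simultaneously in the $G_n$-part (coming from the $A$- and $X_n$-letters of $w_0$) and in the conjugation-by-$\gamma_J$ part (coming from deep $x_I$-letters); but the resulting bound depends only on the finitely many complexity data $M$ and the $|\gamma_J|$'s, hence on $g$ alone.
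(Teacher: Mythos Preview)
Your two structural facts are exactly right and match the paper's Lemma \ref{unifbounded}: the action on generators of depth $\leq n$ is controlled by an automorphism of the finite-rank free group $G_n$, and on deeper generators it is conjugation by a fixed element $\gamma_J$ depending only on the length-$n$ prefix $J$. The overall skeleton, patterned on Proposition \ref{disk}, is also the right one.

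The gap is precisely at the step you flag as the main obstacle. You assert that ``cancellations at the interfaces between adjacent blocks are confined to those bounded initial and terminal segments,'' but you give no mechanism forcing this. The reduced block $\hat g_\ast(\alpha w_0)$ can be arbitrarily long, and nothing you have said prevents, a priori, cancellation between two adjacent copies from eating deep into both. The fact that the word $w_0\alpha w_0$ at the interface is already reduced before applying $\hat g_\ast$ is what should be exploited, but turning that into a uniform cancellation bound requires a nontrivial lemma about automorphisms of free groups. The paper supplies exactly this: Cooper's lemma (Lemma \ref{Cooper}) says that if $f$ is an automorphism of a free group with $\Lambda(f)=\max_{s,\epsilon} l(f^\epsilon(s))$, and if the concatenation $u\cdot v$ is reduced, then the cancellation between $f(u)$ and $f(v)$ is at most $\Lambda(f)^2$. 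Combined with your structural facts (which say $\Lambda_n(\hat g)$ is bounded independently of $n$), this gives the uniform bound $c([g(w)])\geq c([w])-2\Lambda_n(\hat g)^2$; this is the content of the paper's Lemma \ref{Alibegovic}. Without Cooper's lemma or an equivalent, your sentence ``at least $k-C$ full blocks survive'' is an assertion, not a proof.
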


Before proving this proposition, we need two lemmas which will be proved afterwards.

For any mapping class $\varphi \in \pmcgt(S_{n})$, we denote by $\hat{\varphi}$ a homeomorphism of $S_{n}$ which fixes the point $p_{0}$. The homeomorphism $\hat{\varphi}$ induces an automorphism $\pi_{1}(S_{n},p_{0}) \rightarrow \pi_{1}(S_{n},p_{0})$ which we also denote by $\hat{\varphi}$. Let $\Lambda_{n}(\hat{\varphi})= \max_{\epsilon \in \left\{ -1,1 \right\} \ w \in A \cup X_{n}} l(\hat{\varphi}^{\epsilon}(w))$. The following lemma is essentially due to Alibegovic (see \cite{Ali}). As the constant appearing in this lemma is not explicit in \cite{Ali} and as we need an explicit constant, we will prove it.

\begin{lem} \label{Alibegovic}
For any $w \in \pi_{1}(S_{n},x_{0})$ and any $\varphi \in \pmcgt (S_{n})$,
$$c([\varphi(w)]) \geq c([w])-2 \Lambda_{n}(\hat{\varphi})^2.$$
\end{lem}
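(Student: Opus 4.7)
The plan is to set $k = c([w])$ and choose a cyclically reduced representative $w'$ of $[w]$ with $c(w')=k$, decomposed as $w' = w_{0}\bar{w}^{k}w_{1}$ where $\bar{w}$ represents a non-trivial element of $\pi_{1}(S)$. I will then exhibit a specific cyclically reduced representative of $[\hat{\varphi}(w)]$ containing, as a central block, a long power of a single word that is non-trivial in $\pi_{1}(S)$; the number of repetitions will be at least $k-2\Lambda_{n}(\hat{\varphi})^{2}$, giving the lemma.

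First I would identify the new period. Set $u = \hat{\varphi}(\bar{w})$, viewed as a reduced word in $F = \pi_{1}(S_{n},p_{0})$, and let $\eta$ be its cyclic reduction, so that $u = \rho\,\eta\,\rho^{-1}$ in reduced form with $\eta$ cyclically reduced. Then $u^{k} = \rho\,\eta^{k}\,\rho^{-1}$ as reduced words, and $\eta$ is $F$-conjugate to $u$. Because the representative $\hat{\varphi}$ is the restriction to $S_{n}$ of a homeomorphism of $S$ (extended by the identity on the removed disks $D_{I}$), it fits into a commuting square with the inclusion $\pi_{1}(S_{n})\to\pi_{1}(S)$ and an automorphism of $\pi_{1}(S)$ preserving triviality; since $\bar{w}$ is non-trivial in $\pi_{1}(S)$, $u$ is non-trivial in $\pi_{1}(S)$, and so is its $F$-conjugate $\eta$. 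This $\eta$ will serve as the new $\bar{w}$.

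Next I would track the cancellations. In $F$, $\hat{\varphi}(w') = \hat{\varphi}(w_{0})\,\rho\,\eta^{k}\,\rho^{-1}\,\hat{\varphi}(w_{1})$, and after cyclic conjugation $[\hat{\varphi}(w)] = [V\,\eta^{k}]$ where $V$ is the reduced word obtained from $\rho^{-1}\hat{\varphi}(w_{1})\hat{\varphi}(w_{0})\rho$. Cyclically reducing $V\eta^{k}$ amounts to cancelling at the junctions with $\eta^{k}$. Since $w_{0}\bar{w}$, $\bar{w}w_{1}$, and (cyclically) $w_{1}w_{0}$ are reduced words, an explicit bounded cancellation estimate for $\hat{\varphi}$, in the spirit of Cooper's Bounded Cancellation Lemma, shows that the cancellation between $\hat{\varphi}(w_{0})$ and $\hat{\varphi}(\bar{w})$ (and similarly between $\hat{\varphi}(\bar{w})$ and $\hat{\varphi}(w_{1})$) is at most $\Lambda_{n}(\hat{\varphi})^{2}$. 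Consequently, the cancellations erase at most $\Lambda_{n}(\hat{\varphi})^{2}$ letters from each end of $\rho\eta^{k}\rho^{-1}$, hence at most $\Lambda_{n}(\hat{\varphi})^{2}$ complete copies of $\eta$ from each side (using $l(\eta)\geq 1$). The surviving central block $\eta^{k-2\Lambda_{n}(\hat{\varphi})^{2}}$ then yields $c([\hat{\varphi}(w)]) \geq k - 2\Lambda_{n}(\hat{\varphi})^{2}$.

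The main obstacle is proving the explicit cancellation bound by $\Lambda_{n}(\hat{\varphi})^{2}$, which is precisely where the argument has to go beyond Alibegovic's (she works with an unspecified bounded cancellation constant). One would establish it by a direct combinatorial estimate on the Cayley tree of $F$: for a reduced concatenation $uv$, any pair of letters cancelling in $\hat{\varphi}(u)\hat{\varphi}(v)$ must come from the image of a bounded-length suffix of $u$ meeting the image of a bounded-length prefix of $v$, and combining the upper bound $\Lambda_{n}(\hat{\varphi})$ on image-lengths of $\hat{\varphi}$ with the matching lower bound on image-lengths of $\hat{\varphi}^{-1}$ forces the overlap of these images to be at most quadratic in $\Lambda_{n}(\hat{\varphi})$.
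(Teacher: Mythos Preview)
Your proposal is correct and follows essentially the same route as the paper: write the cyclic reduction of $\hat{\varphi}(\bar{w})$ as $\rho\eta\rho^{-1}$, argue that $\eta$ is nontrivial in $\pi_{1}(S)$ because $\hat{\varphi}$ induces an automorphism of $\pi_{1}(S)$, and then use Cooper's bounded cancellation (which the paper states and proves immediately after as Lemma~\ref{Cooper}) to show that at most $2\Lambda_{n}(\hat{\varphi})^{2}$ copies of $\eta$ are lost. The only organizational difference is that the paper first conjugates to the form $\hat{\varphi}(w_{2}w_{1})\,\lambda\underline{w}'^{p}\lambda^{-1}$ so that Cooper is applied once at a single junction and once for the cyclic reduction, whereas you keep the three junctions separate; your bookkeeping of exactly which junctions are controlled by which application of Cooper is a little loose, but the bound and the argument are the same.
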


This lemma combined with the lemma below will yield Proposition \ref{fix}.

\begin{lem} \label{unifbounded}
Let $f \in \pmcgt(S,K)$. There exist $C_{f} >0$ such that, for any $w \in X \cup A$ and any $\epsilon \in \left\{ -1, 1 \right\}$,
$$ l(\hat{f}^{\epsilon}(w)) \leq C_{f}.$$ 
\end{lem}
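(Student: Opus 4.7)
The plan is to exploit the crucial property that $\hat{f}$ fixes the open set $U_{N(f)}$ pointwise, and hence by continuity fixes $\overline{U}_{N(f)}$ pointwise. This will let me reduce the problem to a finite amount of data depending only on the length-$N(f)$ prefix of the multi-index $I$ labeling the generator $x_I$.

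First, I would dispose of the easy generators. The set $A$ is finite, and the set of $x_I \in X$ with $|I| < N(f)$ is also finite (there are fewer than $2^{N(f)}$ of them). For any such $w$, the element $\hat{f}^{\pm 1}(w)$ is a single fixed element of $\pi_1(S \setminus K, p_0)$, hence has some finite length; taking the maximum over this finite set yields a constant $C_1$.

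The main case is $w = x_I$ with $|I| \geq N(f)$ and $I$ ending in $0$. Here I would write $I = J I'$ with $|J| = N(f)$, so that by the nested structure of the disks one has $\overline{D_I} \subset \overline{D_J} \subset \overline{U}_{N(f)}$. Representing $x_I$ as $\alpha_I \cdot \partial D_I \cdot \alpha_I^{-1}$, I would choose the path $\alpha_I$ to factor as $\alpha_J \cdot \gamma_{J,I}$, where $\alpha_J$ is a once-and-for-all choice of path from $p_0$ to $\partial D_J$ (the one implicitly defining $x_J$), and $\gamma_{J,I}$ is a path from $\partial D_J$ to $\partial D_I$ lying inside $\overline{D_J} \setminus K$. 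Because $\hat{f}$ fixes $\overline{U}_{N(f)}$ pointwise, it fixes both $\gamma_{J,I}$ and $\partial D_I$ pointwise, and therefore
$$\hat{f}(x_I) \;=\; \hat{f}(\alpha_J) \cdot \gamma_{J,I} \cdot \partial D_I \cdot \gamma_{J,I}^{-1} \cdot \hat{f}(\alpha_J)^{-1} \;\simeq\; \beta_J \cdot x_I \cdot \beta_J^{-1}$$
as based loops in $S \setminus K$, where I set $\beta_J := \hat{f}(\alpha_J)\cdot \alpha_J^{-1}$, and where the last identification comes from inserting $\alpha_J^{-1}\alpha_J$ twice.

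The key observation, which is what makes the bound uniform, is that $\beta_J$ depends only on the length-$N(f)$ prefix $J$ of $I$ and not on the tail $I'$. Since there are only $2^{N(f)}$ choices of $J$, the quantity $B_f := \max_{|J|=N(f)} l(\beta_J)$ is finite, and one gets $l(\hat{f}(x_I)) \leq 2 B_f + 1$ for every $x_I$ in the main case. The same argument applies verbatim to $\hat{f}^{-1}$, since $\hat{f}^{-1}$ also pointwise fixes $U_{N(f)}$. Setting $C_f := \max(C_1, 2B_f + 1)$ then yields the lemma. The only nontrivial step is verifying the concatenation identity above, and the main conceptual point to stress is that fixing $U_{N(f)}$ forces all the ``deep'' parts of the paths $\alpha_I$ to cancel, collapsing infinitely many cases to the finitely many conjugators $\beta_J$.
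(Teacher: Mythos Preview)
Your proof is correct and follows essentially the same approach as the paper's: both arguments exploit that $\hat{f}$ pointwise fixes $U_{N(f)}$ to show that every generator $x_I$ with $|I|\geq N(f)$ is sent by $\hat{f}$ to a conjugate $\beta_J\, x_I\, \beta_J^{-1}$, where the conjugator $\beta_J=\hat{f}(\alpha_J)\alpha_J^{-1}$ depends only on the length-$N(f)$ prefix $J$ of $I$, hence ranges over a finite set. The only cosmetic difference is that the paper packages this as a claim about the sets $Y_I$ and derives the bound for $\hat{f}^{-1}$ algebraically from the identity $\hat{f}^{-1}(w)=\hat{f}^{-1}(\gamma_I^{-1})\,w\,\hat{f}^{-1}(\gamma_I)$, whereas you observe (equally validly) that the same geometric argument applies directly to $\hat{f}^{-1}$.
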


\begin{proof}[Proof of Proposition \ref{fix}]
For any $n \geq N(g)$, the mapping class $g$ induces a mapping class in $\pmcgt (S_{n})$. By Lemma \ref{Alibegovic}, for any $w$ in $\pi_{1}(S_{n},x_{0})$,
$$c([g(w)]) \geq c(w) -2 \Lambda_{n}(\hat{g})^2.$$
By definition of $\Lambda_{n}$ and by Lemma \ref{unifbounded}, the quantity $2\Lambda_{n}(\hat{g})^2$ is uniformly bounded above (independently of $n$) by a constant $C_{g}$. Hence, for any $w$ in $\pi_{1}(S-K,p_{0})$, we have $c([g(w)]) \geq c([w])-C_{g}$. Then it suffices to apply the above inequality with the element $[g^{-1}(w)]$ instead of $[w]$ to obtain the proposition.
\end{proof}

\begin{proof}[Proof of Lemma \ref{Alibegovic}]
Let $p=c([w])$. Take a cyclically reduced representative $w$ in $\pi_{1}(S_{n},p_{0})$ of the class $[w]$ such that $c(w)=p$. We assume that $p > 2 \Lambda_{n}(\hat{\varphi})^{2}$. Otherwise, the lemma is trivial. By definition of $c$, there exists a word $\underline{w}$ in $\pi_{1}(S_{n},p_{0})$ which projects non-trivially on $\pi_{1}(S,p_{0})$ such that $w=w_{1}\underline{w}^p w_{2}$. , where $w_{1}, w_{2} \in \pi_{1}(S_{n},p_{0})$. Then
$$ \hat{\varphi}(w)= \hat{\varphi}(w_{1}) \hat{\varphi}(\underline{w})^p \hat{\varphi}(w_{2}).$$
Let $\lambda \underline{w}'\lambda^{-1}$ be the reduced representative of $\hat{\varphi}(\underline{w})$, where the word $\underline{w}'$ is cyclically reduced. Of course, the element $\underline{w}'$ projects nontrivially on $\pi_{1}(S,p_{0})$ as the morphism $\hat{\varphi}$ induces an automorphism of $\pi_{1}(S, p_{0})$. Note that the element $\hat{\varphi}(w_{2}^{-1}w_{1})\lambda \underline{w}'^{p} \lambda^{-1}$ belongs to the conjugacy class of $\hat{\varphi}(w)$ Let $w'_{12}$ be the reduced representative of $\varphi(w_{2}^{-1}w_{1})$ and let $W=w_{12} \lambda \underline{w}' \lambda^{-1}$. By Lemma \ref{Cooper} below, there are at most $\Lambda_{n}(\hat{\varphi})^2$ simplifications between the words $w'_{12}$ and $\lambda \underline{w}'^{p} \lambda^{-1}$. Moreover, by the same lemma, the length of a word $A$ such that the reduced representative of $W$ has the form $A W' A^{-1}$, with $W'$ cyclically reduced, is at most $\Lambda_{n}(\hat{\varphi})^2$. Hence the word $W'$ contains the word $\underline{w}'^{p-2 \Lambda_{n}(\hat{\varphi})^2}$ as a subword.  As a consequence $$c([\varphi(w)]) \geq c(W') \geq p-2\Lambda_{n}(\hat{\varphi})^2.$$  
\end{proof}

Let $n\geq 1$. We denote by $l$ the word length on the free group $F_{p}$ on $p$ generators and by $\mathcal{G}$ the standard generating set of this free group. For any automorphism $f$ of the free group $F_{p}$, let 
$$ \Lambda(f)=\max_{\epsilon \in \left\{ -1,1 \right\}, \ g \in \mathcal{G}} l(f^{\epsilon}(g)).$$
For elements $w$ and $w'$ of $F_{p}$, which we see as reduced words on elements of $\mathcal{G}\cup \mathcal{G}^{-1}$, we denote by $co(w,w')$ the length of the longest word $\underline{w}$ such that the words $w$ and $w'$ both begin with $\underline{w}$. The following lemma is due to Cooper (see \cite{Coo}).

\begin{lem} \label{Cooper}
Let $f$ be an automorphism of $F_{p}$. For any $w$ and $w'$ in $F_{p}$, if $co(w,w')=0$, then $co(f(w),f(w'))\leq \Lambda(f)^2$.
\end{lem}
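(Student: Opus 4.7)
The plan is to prove the bound by a careful analysis of cancellations when $f$ is applied to a reduced word. Write $w = a_1 a_2 \cdots a_n$ and $w' = b_1 b_2 \cdots b_m$ in reduced form; the hypothesis $co(w, w') = 0$ implies $a_1 \neq b_1$ (the case where one of them is trivial is immediate). Set $\Lambda := \Lambda(f)$; by the symmetric definition, $\Lambda(f^{-1}) = \Lambda$ as well. Write $u, u'$ for the reduced forms of $f(w), f(w')$, and $L := co(u, u')$.

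The key tool is a ``commitment'' observation. Iteratively reducing the unreduced product $f(a_1) f(a_2) \cdots f(a_n)$, the partial reduced word at step $i$ equals $f(a_1 \cdots a_i)$; the next multiplication by $f(a_{i+1})$ cancels at most $|f(a_{i+1})| \leq \Lambda$ characters at the right boundary. Hence the first $\max(0, |f(a_1 \cdots a_i)| - \Lambda)$ characters of $f(a_1 \cdots a_i)$ are permanent and form a prefix of $u$. The same applies to the expansion for $u'$.

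To conclude, I would argue by contradiction: suppose $L > \Lambda^2$. Since $L$ is the length of the common prefix of $u$ and $u'$, applying $f^{-1}$ to an appropriate committed portion transports the shared initial segment back to $w$ and $w'$. The dual commitment observation for $f^{-1}$ (valid because $\Lambda(f^{-1}) = \Lambda$) shows that any common prefix of $u, u'$ of length $> \Lambda \cdot \Lambda = \Lambda^2$ survives the composition $f \circ f^{-1}$ well enough to produce a nontrivial common prefix of $w$ and $w'$, contradicting $a_1 \neq b_1$. The bound $\Lambda^2$ arises as the product of the commitment radius $\Lambda$ in the $f$-direction and the commitment radius $\Lambda$ in the $f^{-1}$-direction.

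The main obstacle is making the ``dual commitment'' argument fully quantitative and handling the boundary case where cancellations in the expansion of $f(w)$ entirely consume $f(a_1)$ (and likewise for $w'$). In that regime, the first letters of $u$ arise not directly from $f(a_1)$ but from residuals of later terms, so the transport back to $w, w'$ via $f^{-1}$ requires a more delicate case analysis. Cooper's original argument in \cite{Coo} executes this bookkeeping carefully, and the plan here is to follow his outline.
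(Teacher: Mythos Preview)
Your contrapositive set-up and the one-step cancellation bound (multiplying the reduced word $f(a_1\cdots a_i)$ by $f(a_{i+1})$ erases at most $\Lambda$ letters on the right) are exactly what the paper uses. The gap is in the next sentence: from this one-step bound you conclude that the first $|f(a_1\cdots a_i)|-\Lambda$ letters of $f(a_1\cdots a_i)$ are \emph{permanent}, i.e.\ already a prefix of the final word $u$. That is not what the one-step bound gives---it only guarantees survival into $f(a_1\cdots a_{i+1})$. Over several further steps the right end can keep being eroded, and nothing you have written prevents $|f(a_1\cdots a_j)|$ from dropping below $|f(a_1\cdots a_i)|-\Lambda$ at some later $j$, wiping out part of your ``committed'' prefix. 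Your heuristic that $\Lambda^2$ is a ``product of two commitment radii'' does not repair this, and the deferral to Cooper at the end is really deferring the whole proof.

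The missing ingredient is the two-sided Lipschitz estimate $l(f^{\pm1}(v))\ge l(v)/\Lambda$, and this is precisely what drives the paper's argument. Working with $f^{-1}$ applied to words $w,w'$ sharing a prefix $\overline{w}$, one has $l(f^{-1}(\overline{w}\xi))\ge l(\overline{w})/\Lambda$ for \emph{every} prefix $\overline{w}\xi$ of $w$ extending $\overline{w}$; combined with the one-step bound this yields
\[
co\big(f^{-1}(\overline{w}\xi),\,f^{-1}(\overline{w}\xi a)\big)\;\ge\; l(\overline{w})/\Lambda-\Lambda
\]
for every such prefix and next letter $a$. Hence all the intermediate words---in particular $f^{-1}(\overline{w})$ and $f^{-1}(w)$---share a common prefix of that length, and likewise for $w'$, giving $co(f^{-1}(w),f^{-1}(w'))\ge l(\overline{w})/\Lambda-\Lambda$, which is positive once $l(\overline{w})>\Lambda^{2}$. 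That is how the bound $\Lambda^{2}$ actually appears. Once you insert this length lower bound, the boundary cases you worry about (complete cancellation of $f(a_1)$, etc.) take care of themselves and your outline becomes the paper's proof.
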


This lemma means that, if $w^{-1}w'$ is a reduced word, then there are at most $\Lambda(f)^{2}$ simplifications in $f(w)^{-1}f(w')$ between the words $f(w)^{-1}$ and $f(w')$.

\begin{proof}
It suffices to prove that, for any elements $w$ and $w'$ of $F_{p}$,
$$co(w,w') > \Lambda(f)^{2} \Rightarrow co(f^{-1}(w),f^{-1}(w'))>0.$$

\begin{claim} Denote by $\overline{w}$ the maximal common prefix of $w$ and $w'$. Then
$co(f^{-1}(w),f^{-1}(w'))\geq l(\overline{w})/\Lambda(f)-\Lambda(f).$
\end{claim}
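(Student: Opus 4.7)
The plan is to prove the equivalent rearranged inequality $l(\overline{w}) \leq \Lambda k + \Lambda^2$, where $k = co(f^{-1}(w), f^{-1}(w'))$ and $\Lambda = \Lambda(f)$. Set $a = f^{-1}(w)$, $b = f^{-1}(w')$, and let $c$ be their common prefix of length $k$, so that $a = c \cdot a'$ and $b = c \cdot b'$ as reduced concatenations. The degenerate cases, in which $a'$ or $b'$ is empty, are immediate: if $a' = \emptyset$ then $w = f(c)$, so $l(\overline{w}) \leq l(w) \leq \Lambda k$ (and symmetrically for $b'$). Henceforth assume both $a', b'$ are nonempty, with first letters $\alpha, \beta$; by maximality of $c$ we have $\alpha \neq \beta$.

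The first step is to note that $f$ is $\Lambda$-Lipschitz on $F_p$, so $l(f(c)) \leq \Lambda k$. Since $c\alpha$ is reduced, the cancellation in the concatenation $f(c) \cdot f(\alpha)$ is bounded by the length of the shorter factor at the junction, namely $l(f(\alpha)) \leq \Lambda$; hence $l(f(c\alpha)) \leq \Lambda(k+1)$, and $f(c\alpha)$ begins with a prefix of $f(c)$ of length at least $l(f(c)) - \Lambda$. The same holds for $f(c\beta)$.

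The second step is to bound the common prefix of the two reduced words $f(c\alpha)$ and $f(c\beta)$ by $\Lambda k + \Lambda$. Both contain the same prefix of $f(c)$ of length at least $l(f(c)) - \Lambda$; beyond that, the continuations come from the surviving initial segments of $f(\alpha)$ and $f(\beta)$, which must eventually disagree (since $\alpha \neq \beta$ and $f$ is injective on generators), and the disagreement is forced within at most $\Lambda$ further letters because each of $f(\alpha), f(\beta)$ has length at most $\Lambda$.

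The third step propagates this to $w$ and $w'$ themselves. Writing $a = c\alpha \cdot a''$ and $b = c\beta \cdot b''$ one has $w = f(c\alpha) \cdot f(a'')$ and $w' = f(c\beta) \cdot f(b'')$ as reduced concatenations; the cancellations at these junctions are again bounded by $\Lambda$ (step-size bound for a single multiplication). The subsequent build-up can be viewed as the incremental sequence $W_i = f(y_1 \cdots y_i)$ tracing a $\Lambda$-Lipschitz path in the Cayley graph of $F_p$, which is a tree; one verifies that after the branching at position $k+1$, the independent developments of the two paths can extend the common prefix of $w, w'$ beyond that of $f(c\alpha), f(c\beta)$ by at most $\Lambda$ letters. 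Combining the three steps gives $l(\overline{w}) \leq \Lambda(k+1) + \Lambda = \Lambda k + 2\Lambda \leq \Lambda k + \Lambda^2$ (assuming $\Lambda \geq 2$; the claim is trivial otherwise), which is equivalent to the desired inequality. The main obstacle is the third step: one must rule out that subsequent cancellations could re-synchronize $w$ and $w'$ far beyond the divergence point of $f(c\alpha), f(c\beta)$, and the cleanest formalization uses the fellow-traveling property of $\Lambda$-Lipschitz sequences in a tree.
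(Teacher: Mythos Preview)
Your argument has a genuine gap at Step 3. The assertion that $w = f(c\alpha)\cdot f(a'')$ holds ``as reduced concatenations'' with cancellation at the junction bounded by $\Lambda$ is simply false: $a''$ is not a single generator but a potentially long word, and the cancellation between $f(c\alpha)$ and $f(a'')$ is not controlled by any single-letter bound. You recognize this yourself when you retreat to ``one verifies that \ldots the independent developments of the two paths can extend the common prefix \ldots by at most $\Lambda$ letters,'' but this is precisely the heart of the matter and is left unproved. Two $\Lambda$-Lipschitz paths in a tree that separate at step $k+1$ can in principle wander and re-approach one another; nothing in your outline rules this out, and the unspecified ``fellow-traveling property'' you invoke does not by itself yield the bound you claim. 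Note also that you end with the inequality $l(\overline{w})\le \Lambda k+2\Lambda$, which is strictly stronger than what is being asserted; this should already make you suspicious that a step has been skipped.

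The paper's proof avoids this difficulty by reversing the direction of the argument. Rather than pushing prefixes of $a=f^{-1}(w)$ forward through $f$, it pulls prefixes of $w$ back through $f^{-1}$. Concretely, one considers the sequence $f^{-1}(\overline{w}\xi)$ as $\overline{w}\xi$ ranges over prefixes of $w$ extending $\overline{w}$. Each such element has length at least $l(\overline{w})/\Lambda$ (because $f$ is $\Lambda$-Lipschitz, so $l(\overline{w}\xi)\le \Lambda\, l(f^{-1}(\overline{w}\xi))$), and consecutive elements differ by multiplication by some $f^{-1}(a)$ of length at most $\Lambda$. Hence each consecutive pair shares a common prefix of length at least $l(\overline{w})/\Lambda - \Lambda$. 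The ultrametric inequality for Gromov products in a tree then gives $co(f^{-1}(\overline{w}),f^{-1}(w))\ge l(\overline{w})/\Lambda-\Lambda$, and symmetrically for $w'$, from which the claim follows. The crucial ingredient is the uniform \emph{lower} bound on the lengths $l(f^{-1}(\overline{w}\xi))$ along the path---exactly the control your forward approach lacks.
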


If we believe this claim and if $l(\overline{w})=co(w,w')>\Lambda(f)^{2}$, then $co(f^{-1}(w),f^{-1}(w'))>0$.

Now, let us prove this claim. It suffices to prove $co(f^{-1}(w),f^{-1}(\overline{w})) \geq l(\overline{w})/\Lambda(f)-\Lambda(f)$ and the same thing  for $f^{-1}(w')$. As $w$ and $w'$ play symmetric roles, it suffices to prove it for $w$.
Take any strict prefix of $w$ of the form $\overline{w}\xi$ and let $a \in A$ be the following letter in the reduced word $w$. Taking the product of $f^{-1}(\overline{w}\xi)$ with $f^{-1}(a)$ can cause at most $\Lambda(f)$ simplifications, hence
$$co(f^{-1}(\overline{w}\xi),f^{-1}(\overline{w}\xi a)) \geq  l(f^{-1}(\overline{w}\xi))- \Lambda(f).$$
By definition of $\Lambda(f)$, $l(f^{-1}(\overline{w}\xi)) \Lambda(f) \geq l(f(f^{-1}(\overline{w}\xi))) \geq l(\overline{w})$.
Hence $co(f^{-1}(\overline{w}\xi),f^{-1}(\overline{w}\xi a)) \geq l(\overline{w})/\Lambda(f)-\Lambda(f).$
But $co(f^{-1}(w),f^{-1}(\overline{w}))\geq \min_{\xi,a} co(f^{-1}(\overline{w}\xi),f^{-1}(\overline{w}\xi a))$, where the minimum is taken over all the words $\xi$ and letters $a$ such that $\overline{w}\xi a$ is a prefix of $w$. This yields the lemma.
\end{proof}

It suffices to prove Lemma \ref{unifbounded} to complete the proof of Proposition \ref{fix}.

\begin{proof}[Proof of Lemma \ref{unifbounded}]
For any finite sequence $I$  of $0$ and $1$, we denote by $Y_{I}$ the set of elements $w$ of $X$ such that there exists a finite sequence $J$ of $0$ and $1$ with $w=x_{IJ}$. Lemma \ref{unifbounded} is an easy consequence of the following claim.

\begin{claim} For any sequence $I$ in $\left\{ 0,1 \right\}^{N(f)}$, there exists an element $\gamma_{I}$ in $\pi_{1}(S_{N(f)},p_{0})$ such that
$$ \forall w \in Y_{I}, \hat{f}(w)=\gamma_{I}w\gamma_{I}^{-1}.$$
\end{claim}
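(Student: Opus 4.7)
The strategy is to exploit the fact that $\hat f$ pointwise fixes the open set $U_{N(f)}$, hence, by continuity, also pointwise fixes each closed disk $\overline{D_{I}}$ with $I\in\{0,1\}^{N(f)}$. In particular $\hat f$ preserves $S_{N(f)}=S\setminus U_{N(f)}$ setwise. For each such $I$, I would fix once and for all a point $q_I\in\partial D_I$ and a path $\alpha_I$ from $p_0$ to $q_I$ lying entirely in $S_{N(f)}$ (apart from the endpoint $q_I$). The loops $x_{IJ}$ are lassos consisting of a ``travel'' portion from $p_0$ to a point of $\partial D_I$, followed by a loop around the sub-Cantor set $K\cap\overline{D_I}$ whose tryadic expansion starts by $2IJ$, followed by the reverse of the travel portion. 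By choosing the representatives of these homotopy classes with ``travel'' portion equal to $\alpha_I$ for all $J$, one may write
$$x_{IJ}=\alpha_I\cdot\beta_{IJ}\cdot\alpha_I^{-1},$$
with $\beta_{IJ}$ a loop in $\overline{D_I}$ based at $q_I$.

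Next I would define $\gamma_I:=\hat f(\alpha_I)\cdot\alpha_I^{-1}$. Since both $\alpha_I$ and $\hat f(\alpha_I)$ are paths in $S_{N(f)}$ from $p_0$ to $q_I$, the loop $\gamma_I$ lies in $S_{N(f)}$ and thus represents an element of $\pi_1(S_{N(f)},p_0)$. Because $\hat f$ pointwise fixes $\overline{D_I}$, it fixes $\beta_{IJ}$, and inserting $\alpha_I^{-1}\alpha_I=1$ on either side of $\beta_{IJ}$ yields
$$\hat f(x_{IJ})=\hat f(\alpha_I)\cdot\beta_{IJ}\cdot\hat f(\alpha_I)^{-1}=\gamma_I\cdot x_{IJ}\cdot\gamma_I^{-1},$$
which is the required identity. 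Crucially $\gamma_I$ depends only on $I$ and on $\hat f$, not on $J$, so a single conjugator serves every $w\in Y_I$.

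The only delicate point in this plan is the claim that a single arc $\alpha_I$ can serve as the ``lasso handle'' for every $x_{IJ}$ simultaneously. This is routine given the way the generators $x_{IJ}$ are constructed, but it is precisely this uniformity (independence of $\gamma_I$ from $J$) that, combined with the finiteness of $\{0,1\}^{N(f)}$ and the finite number of remaining generators in $A\cup X$ of length smaller than $N(f)$, will ultimately deliver the uniform bound needed in Lemma \ref{unifbounded}.
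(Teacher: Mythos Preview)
Your proposal is correct and follows essentially the same argument as the paper: choose an arc $\alpha_I$ in $S_{N(f)}$ from $p_0$ to a point of $\partial D_I$, write each $w\in Y_I$ as $\alpha_I\cdot w'\cdot\alpha_I^{-1}$ with $w'$ a loop in $D_I$, use that $\hat f$ pointwise fixes $D_I$ to conclude $\hat f(w)=\hat f(\alpha_I)\,w'\,\hat f(\alpha_I)^{-1}$, and set $\gamma_I=\hat f(\alpha_I)\cdot\alpha_I^{-1}$. The paper's proof is the same in structure and in the choice of $\gamma_I$.
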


Before proving the claim, let us prove Lemma \ref{unifbounded}. Observe that
$$ A \cup X= A \cup X_{N(f)} \cup \bigcup_{I \in \left\{ 0,1 \right\}^{N(f)}} Y_{I}.$$
By the above claim, for any element $w$ of $X \cup A$,
$$ l(\hat{f}(w)) \leq \max(1+2 \max_{I \in \left\{ 0,1 \right\}^{N(f)}} l(\gamma_{I}), \max_{w \in A \cup X_{N(f)}} l(\hat{f}(w))).$$
Moreover, by the claim and as, for any element $w$ of $X \cup A$,
$$\hat{f}^{-1}(w)=\hat{f}^{-1}(\gamma_{I}^{-1})\hat{f}^{-1}(\hat{f}(w))\hat{f}^{-1}(\gamma_{I})=\hat{f}^{-1}(\gamma_{I}^{-1})w\hat{f}^{-1}(\gamma_{I}).$$
Therefore, we can take
$$ C_{f}= \max(1+2 \max_{I \in \left\{ 0,1 \right\}^{N(f)} \ \epsilon \in \left\{-1, 0 \right\}} l(\hat{f}^{\epsilon}(\gamma_{I})), \max_{w \in A \cup X_{N(f)} \ \epsilon \in \left\{ -1,1 \right\}} l(\hat{f}^{\epsilon}(w))).$$

Now, let us prove the claim. For any $I \in \left\{ 0,1 \right\}^{N(f)}$, fix a simple curve $\alpha_{I}$ such that $\alpha_{I}(0)=p_{0}$, $\alpha_{I}(1) \in \partial D_{I}$ and $\alpha_{I}((0,1)) \subset S_{N(f)}$. A loop which represents $w \in Y_{I}$ can be written $w=\alpha_{I} w' \alpha_{I}^{-1}$, where $w':[0,1] \rightarrow S$ is a simple loop which is contained in $D_{I}$ with $w'(0)=w'(1)=\alpha_{I}(1)$. As the diffeomorphism $\hat{f}$ fixes the point $p_{0}$ and pointwise fixes $U_{N(f)}$,
$$\hat{f}(w)=\hat{f}(\alpha_{I})\hat{f}(w') \hat{f}(\alpha_{I}^{-1})=\hat{f}(\alpha_{I})w' \hat{f}(\alpha_{I}^{-1}),$$ $\hat{f}(\alpha_{I}(0))=p_{0}=\alpha_{I}(0)$, $\hat{f}(\alpha_{I}(1))=\alpha_{I}(1)$ and $\hat{f}(\alpha_{I}((0,1))) \subset S_{N(f)}$. Hence $\hat{f}(w)=\hat{f}(\alpha_{I}) \alpha_{I}^{-1} w \alpha_{I} f(\alpha_{I}^{-1}).$
It suffices to take $\gamma_{I}=f(\alpha_{I})\alpha_{I}^{-1}$.
\end{proof}

\begin{defn}Given an arbitrary cantor set $K$ in a surface $S$, there is a homeomorphism $h_K$ of $S$ sending $K$ to the standard ternary cantor set $K' \subset S$. We define, for each isotopy class $[w]$ of loop in $S \setminus K$: $$c([w]) = c([h_K(w)])$$
\end{defn}

We are ready to prove and state the main proposition for the smooth mapping class group $\mcg(S,K)$.

\begin{prop}\label{ali} Let $K$ be any cantor set in a surface $S$. For every $g \in \mcg(S,K)$, there exists a constant $C_g$ such that, for any isotopy class $[w]$ of loop contained in $S \setminus K$,
$$c(g([w])) \leq c([w]) + C_g.$$
\end{prop}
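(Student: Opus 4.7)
The plan is to reduce the statement to an application of Propositions \ref{disk} and \ref{fix} by factoring the mapping class $g$ as a product of two elements: one supported in the disk $D$ and one representing a class in $\pmcgt(S,K)$. The trivial reduction comes first: since $c([w]) := c([h_K(w)])$ by definition, conjugating $g$ by the identification $h_K : (S,K) \to (S,K')$ yields $g' \in \mcg(S, K')$ with $c(g([w])) = c(g'([h_K(w)]))$ and $c([w]) = c([h_K(w)])$, so it suffices to treat the case of the standard ternary Cantor set.

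Assume then $K = K'$ and let $\phi \in \D(S,K)$ represent $g$. I would construct a homeomorphism $h$ of $S$ supported in $D$ such that $h|_K = \phi|_K$. Concretely, $\phi|_K$ is an orientation-preserving self-homeomorphism of a Cantor set lying in the open disk $D$; one chooses a sufficiently fine clopen decomposition $K = \bigsqcup_I K_I$ that is permuted by $\phi|_K$, encloses each $K_I$ in a small sub-disk contained in $D$, and glues together homeomorphisms between the chosen sub-disks which are the identity near each sub-disk's boundary and on a collar of $\partial D$. Because $\phi|_K$ belongs to $\mathfrak{diff}_S(K)$, this construction can be upgraded (via fragmentation) to produce a smooth diffeomorphism $h$ supported in $D$ realizing $\phi|_K$. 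The map $f := h^{-1}\phi$ is then a diffeomorphism in $\D(S,K)$ fixing $K$ pointwise, and Corollary \ref{local} provides an isotopy of $f$ relative to $K$ to a diffeomorphism that is the identity on a neighborhood of $K$; hence $[f] \in \pmcgt(S,K)$.

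With the factorization $g = [h][f]$ in $\mcgt(S,K)$ in place, the conclusion is immediate: Proposition \ref{fix} gives $c(f([w])) \leq c([w]) + C_f$, Proposition \ref{disk} then gives $c(h(f([w]))) \leq c(f([w])) + 2$, so one can take $C_g := C_f + 2$.

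The main technical obstacle is the realization step: producing $h$ supported in $D$ in such a way that the residual $h^{-1}\phi$ actually represents a class in $\pmcgt(S,K)$, rather than merely a homeomorphism fixing $K$ pointwise. This is where the smooth setting is essential; it allows us both to realize $\phi|_K$ by a diffeomorphism (available through $\mathfrak{diff}_S(K)$), and, via a fragmentation argument, to push the support of this diffeomorphism into $D$, after which Corollary \ref{local} upgrades ``fixes $K$ pointwise'' to ``fixes a neighborhood of $K$''.
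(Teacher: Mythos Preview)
Your overall strategy matches the paper's exactly: factor $g$ as a disk-supported element times a pure mapping class, then apply Propositions \ref{disk} and \ref{fix}. The paper carries this out by simply invoking Proposition \ref{representative}, which states that every element of $\mathfrak{diff}_S(K)$ has a representative in $\mathrm{Diff}_0^\infty(S)$ supported in $D$; the factorization $g = g_1 g_2$ with $g_1$ supported in $D$ and $g_2 \in \pmcg(S,K)$ follows immediately, and Corollary \ref{local} then places (the conjugate of) $g_2$ in $\pmcgt(S,K')$.

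Your inline sketch of the realization step, however, contains an error: you claim one can choose a finite clopen partition $K = \bigsqcup_I K_I$ that is \emph{permuted} by $\phi|_K$. A general homeomorphism of a Cantor set admits no such invariant finite partition---think of an odometer, or any minimal Cantor system. What Proposition \ref{representative} actually does is more delicate: it takes a fine neighbourhood $\Sigma$ of $K$ with $\Sigma \cup \phi(\Sigma) \subset \mathring{D}$, fills the holes of the components of $\Sigma$ to obtain pairwise disjoint disks $D_i'$, and then uses the disk-isotopy Lemma \ref{diskisotopy} inductively to build a diffeomorphism supported in $D$ that agrees with $\phi$ on each $D_i'$. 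No invariant combinatorial structure on $K$ is required. Once you replace your sketch by a citation of Proposition \ref{representative}, your argument coincides with the paper's.
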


\begin{proof}

By making use of Proposition \ref{representative}, we can write any representative of a mapping class group $g \in \mcg(S,K)$ as a product $g = g_1g_2$, where $g_1$ is supported in a disk $D$ containing $K$ and $g_2$ is a pure mapping class in $\pmcg(S,K)$.\\

If $h_K$ is the homeomorphism of $S$ sending $K$ to the ternary Cantor set $K'$, we have $$ g^{h_K}=g_1^{h_K}g_2^{h_K}$$

where the notation $x^y$ is used to denote the conjugate $yxy^{-1}$.\\

Recall that a representative of $g_{2}$ pointwise fixes a neighborhood of $K$, by Lemma \ref{handel}. Hence the elements  $g_1^{h_K}$  and $g_2^{h_K}$ satisfies the conclusion of our theorem by Prop. \ref{disk} and Prop. \ref{fix}, therefore the mapping class $g^{h_K}$ in $\mcgt(S,K')$ also satisfies the conclusion of our theorem. So, for any isotopy class $[w]$ of loop contained in $S \setminus K$: 

$$c([g(w)]) = c([h_Kg(w)]) = c(g^{h_K}([h_K(w)])) \leq c([h_K(w)]) + C_{g} = c([w]) +  C_{g}.$$
\end{proof}

As a corollary, we can prove a second case of Theorem \ref{main2}.

\begin{proof}[Proof of Theorem \ref{main2} : second case]
Let $g$ be an element of $\pmcg(S,K)$ whose normal form contains a Dehn twist about a simple loop $\alpha$ which is not homotopically trivial in $S$. If such a loop $\alpha$ exists, one can find a non trivial loop  $[w] \in S\setminus K$, such that $c(g^n([w])) \geq nk_1$ for some $k_1 >0$. By Prop. \ref{ali}, the mapping class $g$ is not distorted.

\end{proof}

\subsection{All the curves of $R$ are nullhomotopic in $S$} \label{Dehntwist}

To prove Theorem \ref{main2} in this case, we need the concept of spread. This concept was introduced by Franks and Handel in \cite{FH}.

\subsubsection{Spread}

Let $\gamma$ be a smooth curve with endpoints $p, q$ (smooth at the endpoints) and $\beta$ be a simple nullhomotopic closed curve on $S$ such that the point $p$ is contained in the disk bounded by $\beta$. For any curve $\alpha$, the spread $L_{\beta, \gamma}(\alpha)$ is going to measure how many times $\alpha$ rotates around $\beta$ with respect to $\gamma$.\\

More formally $L_{\beta, \gamma}(\alpha)$ is defined as the maximal number $k$, such that there exist subarcs $\alpha_0 \subset \alpha $, $\gamma_0 \subset \gamma$ such that $\overline{\gamma_0\alpha_0}$ is a closed curve isotopic to $\beta^k$ in $S \setminus \{p,q\}$.\\

\begin{exmp} In the example depicted in Figure \ref{fig}, we have a thrice-punctured sphere $S$ together with the curves $\alpha$, $\beta$ and $\gamma$. In this case we have $L_{\beta, \gamma}(\alpha) = 3$. The bold loop is a loop isotopic to $\beta^{3}$.

\begin{figure}

  \centering
    \includegraphics[width=1.0\textwidth]{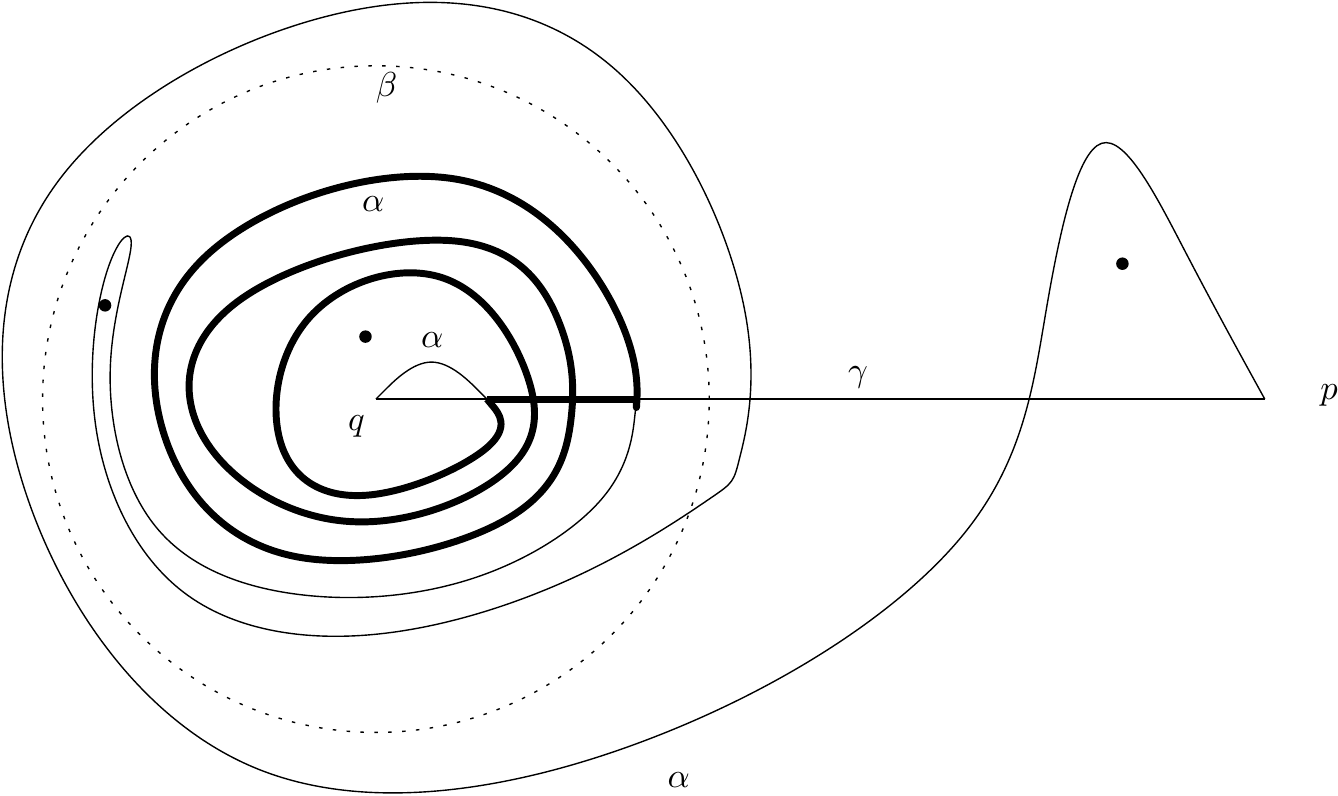}
    \caption{Examples of curves $\alpha$, $\beta$ and $\gamma$ with $L_{\beta,\gamma}(\alpha)=3$}
\label{fig}
    
\end{figure}

\end{exmp}

Let us denote by $\mathcal{C}_{S,K}$ the set of simple smooth curves $[0,1] \rightarrow S$ whose endpoints belong to $K$ and whose interior is contained in $S-K$. Two such curves are said to be isotopic if there exists a diffeomorphism in $\mathrm{Diff}_{0}(S,K)$ which sends one of these curves to the other one.

We denote by $\overline{\mathcal{C}}_{S,K}$ the set of isotopy classes of curves in $\mathcal{C}_{S,K}$. Take an isotopy class $[\alpha]$ in $\overline{\mathcal{C}}_{S,K}$. We define $\overline{L}_{\beta, \gamma}([\alpha])$ as the infimum of $L_{\beta, \gamma}(\alpha)$ over all the representatives $\alpha$ of the class $[\alpha]$.\\ 

The lemma below is stronger than Lemma 6.8 in \cite{FH} but it follows from the proof of Lemma 6.8. Notice that we state it only in the case where the curve $\beta$ bounds a disk as we believe that the proof given in \cite{FH} covers only this case (which is sufficient for the purposes of the article \cite{FH}).

Let $\gamma$ be a smooth curve in $\mathcal{C}_{S,K}$ which join the points $x_{1}$ and $x_{2}$. Let $\beta$ be a simple loop contained in $S-K$ which bounds a disk $D$ in the surface $S$ and which is homotopic to a small loop around $x_{1}$ relative to $\left\{ x_{1}, x_{2} \right\}$. We suppose that $\gamma$ is not homotopic relative to its endpoints to a curve contained in $D$.

\begin{lem}[Franks-Handel]
Let $\mathcal{G}=\left\{ g_{i}, \ 1 \leq i \leq k \right\}$ be a finite set of elements of $\mathrm{Diff}(S)$. There exists a constant $C>0$ such that the following property holds. Let $f$ be any diffeomorphism which fixes $K$ pointwise and which belongs to the group generated by the $g_{i}$'s. Then, for any curve $\alpha$ in $\mathcal{C}_{S,K}$,
$$ L_{\beta, \gamma}(f(\alpha)) \leq L_{\beta, \gamma}(\alpha)+C l_{\mathcal{G}}(f).$$ 
\end{lem}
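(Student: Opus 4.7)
The plan is a telescoping argument over a shortest factorisation of $f$, reducing the lemma to a single-generator estimate, followed by a comparison of spread under a change of reference data $(\beta,\gamma)$.

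\emph{Telescoping.} Write $f = s_{1}s_{2}\cdots s_{n}$ with $s_{j} \in \mathcal{G}$ and $n = l_{\mathcal{G}}(f)$. Set $\alpha_{n} = \alpha$ and $\alpha_{j-1} = s_{j}(\alpha_{j})$, so that $\alpha_{0} = f(\alpha)$. From the identity
$$L_{\beta,\gamma}(f(\alpha)) - L_{\beta,\gamma}(\alpha) = \sum_{j=1}^{n}\bigl(L_{\beta,\gamma}(s_{j}(\alpha_{j})) - L_{\beta,\gamma}(\alpha_{j})\bigr),$$
it suffices to produce, for each $g \in \mathcal{G}$, a constant $C_{g}$ depending only on $g$ and on the fixed $\beta,\gamma$, such that
$$L_{\beta,\gamma}(g(\delta)) \leq L_{\beta,\gamma}(\delta) + C_{g}$$
for every simple smooth curve $\delta$ in $S$. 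Then $C = \max_{g \in \mathcal{G}} C_{g}$ does the job. Note that the intermediate curves $\alpha_{j}$ need not have their endpoints in $K$, but this is irrelevant since spread is defined for any simple smooth curve.

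\emph{Equivariance.} Because $g \in \mathrm{Diff}(S)$ is a homeomorphism, it sends pairs of subarcs to pairs of subarcs and sends the isotopy class of $\beta^{k}$ in $S \setminus \{p,q\}$ to the isotopy class of $g^{-1}(\beta)^{k}$ in $S \setminus \{g^{-1}(p), g^{-1}(q)\}$. Pushing this through the definition of spread gives the tautological identity
$$L_{\beta,\gamma}(g(\delta)) = L_{g^{-1}(\beta),\,g^{-1}(\gamma)}(\delta).$$
So the single-generator estimate reduces to a comparison principle: for any two admissible pairs of reference data $(\beta,\gamma)$ and $(\beta',\gamma')$, there is a constant $C(\beta,\gamma,\beta',\gamma')$ such that $L_{\beta',\gamma'}(\delta) \leq L_{\beta,\gamma}(\delta) + C(\beta,\gamma,\beta',\gamma')$ for every simple smooth $\delta$.

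\emph{Comparison.} Suppose $k = L_{\beta',\gamma'}(\delta)$ is witnessed by subarcs $\delta_{0} \subset \delta$ and $\gamma'_{0} \subset \gamma'$ whose concatenation is isotopic to $(\beta')^{k}$ in $S \setminus \{p',q'\}$. I would modify the witness in two steps: (a) replace $\gamma'_{0}$ by a subarc $\gamma_{0} \subset \gamma$ sharing the same endpoints as $\delta_{0}$, and (b) rewrite $(\beta')^{k}$ as a power of $\beta$ in $\pi_{1}(S \setminus \{p,q,p',q'\})$, using a basis containing both $\beta$ and $\beta'$. Operation (a) inserts a ``correction'' loop formed of fixed subarcs of $\gamma$ and $\gamma'$, while (b) introduces a word expressing $\beta'$ in terms of $\beta$ and the other generators. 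Both modifications change the $\beta$-exponent by an amount bounded in terms of $\beta,\gamma,\beta',\gamma'$ alone. Reading off the resulting $\beta$-exponent in the $\beta$-cyclic cover of $S\setminus\{p,q\}$ then yields a witness $(\delta_{0},\gamma_{0})$ for $L_{\beta,\gamma}(\delta) \geq k - C(\beta,\gamma,\beta',\gamma')$.

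\emph{Main obstacle.} The delicate point is making the comparison uniform in $\delta$: the subarc $\delta_{0}$ may be arbitrarily complicated and its endpoints may lie anywhere in $S$. What saves the argument is that $\delta_{0}$ enters exactly once in the concatenation $\overline{\gamma_{0}\delta_{0}}$, so the surgeries in (a) and (b) touch only the reference arcs and not $\delta_{0}$; their $\beta$-contribution is then bounded by quantities depending only on the fixed data $\beta,\gamma,\beta',\gamma'$. Making this precise is the content of the proof of Franks--Handel's Lemma~6.8 in \cite{FH}, which constructs an explicit annular neighbourhood of $\beta$ and measures spread as a displacement in a cyclic cover; once the uniform comparison is established, the telescoping step of the first paragraph concludes the argument.
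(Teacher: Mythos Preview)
The paper does not supply its own proof here (it defers to the proof of Lemma~6.8 in \cite{FH}), so I evaluate your argument on its own terms. There is a genuine gap: the single-generator estimate you reduce to,
\[
L_{\beta,\gamma}(g(\delta)) \leq L_{\beta,\gamma}(\delta) + C_{g} \quad \text{for every simple smooth curve } \delta,
\]
is \emph{false} when $g$ does not fix the endpoints $p=x_{1}$, $q=x_{2}$ of $\gamma$, and in this lemma the generators $g_{i}\in\mathrm{Diff}(S)$ need not fix $K$. Indeed, let $p'=g^{-1}(p)\neq p$ and take $\delta_{n}$ a simple arc spiralling $n$ times in a tiny disk about $p'$ disjoint from $\gamma$. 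Then $L_{\beta,\gamma}(\delta_{n})=0$, since $\delta_{n}\cap\gamma=\emptyset$ and no concatenation $\overline{\gamma_{0}\delta_{0}}$ is possible. But $g(\delta_{n})$ spirals $n$ times around $p$ and therefore crosses $\gamma$ about $n$ times near $p$; a long subarc of $g(\delta_{n})$ together with a short subarc of $\gamma$ forms a loop isotopic to $\beta^{\,n-1}$ in $S\setminus\{p,q\}$. Hence $L_{\beta,\gamma}(g(\delta_{n}))\geq n-1$, and the increment is unbounded in $n$.

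The same example kills your comparison principle. With $(\beta',\gamma')=(g^{-1}\beta,g^{-1}\gamma)$ the reference punctures become $(p',q')\neq(p,q)$; the loop $\beta'$ encircles $p'$ and is \emph{null-homotopic} in $S\setminus\{p,q\}$. Your step~(b) proposes to ``rewrite $(\beta')^{k}$ as a power of $\beta$'' in $\pi_{1}(S\setminus\{p,q,p',q'\})$, but there $\beta$ and $\beta'$ are independent generators, and after filling $p',q'$ back in, the class $(\beta')^{k}$ becomes trivial. Winding about $p'$ simply carries no information about winding about $p$, so no uniform constant $C(\beta,\gamma,\beta',\gamma')$ exists.

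The moral is that the hypothesis ``$f$ fixes $K$ pointwise'' (so $f(p)=p$, $f(q)=q$) is the crux of the lemma and cannot be distributed across a factorisation into generators that individually move $p$ and $q$. Franks and Handel's argument uses the $C^{1}$ regularity of the $g_{i}$ in an essential way---each $g_{i}$ distorts angular directions near any point by a uniformly bounded amount, and this is what controls the spread along the whole word. Your outline never invokes differentiability of the generators at all; given that the paper itself remarks that the corresponding undistortion statements fail in the $C^{0}$ category, this is already a warning sign.
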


\begin{cor}
Let $\overline{\mathcal{G}}= \left\{ \xi_{i}, \ 1 \leq i \leq k \right\}$ be a finite set of elements in $\mcg(S,K)$. Let $\eta$ be an element of the group generated by the $\xi_{i}$'s which fixes $K$ pointwise. Then there exists a constant $C>0$ such that, for any $n>0$ and $[ \alpha ]$ in $\overline{\mathcal{C}}_{S,K}$,
$\overline{L}_{\beta,\gamma}(\eta^{n}([\alpha])) \leq \overline{L}_{\beta,\gamma}([\alpha])+C l_{\overline{\mathcal{G}}}(\eta^{n})$.
\end{cor}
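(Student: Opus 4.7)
For each $\xi_i \in \overline{\mathcal{G}}$, I fix once and for all a representative diffeomorphism $g_i \in \mathrm{Diff}(S, K)$, and set $\mathcal{G} := \{g_i^{\pm 1}\}$. The Franks--Handel lemma applied to this finite set provides a constant $C > 0$ depending only on $\mathcal{G}$, $\beta$ and $\gamma$, and the plan is to show that this $C$ satisfies the conclusion of the corollary.

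Given $n \geq 1$, choose a minimal-length factorisation $\eta^n = \xi_{i_1} \cdots \xi_{i_m}$ with $m = l_{\overline{\mathcal{G}}}(\eta^n)$, and form $f_n := g_{i_1} \cdots g_{i_m}$. Then $f_n$ is a diffeomorphism representing the mapping class $\eta^n$ and has word length at most $m$ in $\mathcal{G}$. Because $\eta$ (hence $\eta^n$) lies in $\pmcg(S, K)$, Lemma~\ref{handel} provides a diffeomorphism $\tilde f_n$ that represents $\eta^n$, fixes $K$ pointwise, and is isotopic to $f_n$ through $\mathrm{Diff}_0(S, K)$. For any representative $\alpha$ of the class $[\alpha] \in \overline{\mathcal{C}}_{S, K}$, the curves $f_n(\alpha)$ and $\tilde f_n(\alpha)$ therefore define the same class in $\overline{\mathcal{C}}_{S, K}$, namely $\eta^n([\alpha])$; in particular
\[
\overline{L}_{\beta, \gamma}\bigl(\eta^n([\alpha])\bigr) \leq L_{\beta, \gamma}\bigl(\tilde f_n(\alpha)\bigr).
\]

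The crux is then to bound $L_{\beta, \gamma}(\tilde f_n(\alpha))$ by means of the Franks--Handel lemma. The main obstacle is that $\tilde f_n$ is not literally an element of $\langle \mathcal{G} \rangle$: it differs from $f_n$ by an $n$-dependent element of $\mathrm{Diff}_0(S, K)$, and one cannot enlarge $\mathcal{G}$ to include this corrective isotopy without losing uniformity of the constant. I would resolve this by inspecting the proof of the Franks--Handel lemma, which proceeds inductively on the word length of the factorisation, with each generator $g_{i_j}$ contributing at most a fixed amount of spread. This inductive step applies to the specific word $g_{i_1} \cdots g_{i_m}$ regardless of whether $f_n$ itself fixes $K$ pointwise: the hypothesis \emph{$f$ fixes $K$ pointwise} in the lemma is needed only to ensure that the endpoints of $\alpha$ and $f(\alpha)$ coincide, an identification which is automatic here after replacing $f_n$ by the pure representative $\tilde f_n$ on the left-hand side. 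Granting this, I obtain $L_{\beta, \gamma}(\tilde f_n(\alpha)) \leq L_{\beta, \gamma}(\alpha) + C m$. Choosing $\alpha$ with $L_{\beta,\gamma}(\alpha)$ arbitrarily close to $\overline{L}_{\beta,\gamma}([\alpha])$ and combining with the previous display then yields the desired inequality $\overline{L}_{\beta,\gamma}(\eta^n([\alpha])) \leq \overline{L}_{\beta,\gamma}([\alpha]) + C\, l_{\overline{\mathcal{G}}}(\eta^n)$.
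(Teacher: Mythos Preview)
Your overall strategy coincides with the paper's: pick representatives $g_i$ of the $\xi_i$, write $\eta^n$ as a word of length $l_n = l_{\overline{\mathcal{G}}}(\eta^n)$ in these representatives, and apply the Franks--Handel lemma. The difficulty you raise, however, is not real, and the workaround you propose (``inspecting the proof'' of the lemma) is both unnecessary and too vague to count as a proof.

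The point you are missing is that your $f_n = g_{i_1}\cdots g_{i_m}$ \emph{already} fixes $K$ pointwise. Indeed, $K$ is a Cantor set, hence totally disconnected; any element of $\mathrm{Diff}_0(S,K)$ is joined to the identity by a path $(\phi_t)$ of diffeomorphisms preserving $K$, and for each $p\in K$ the continuous path $t\mapsto\phi_t(p)\in K$ must be constant. Thus every element of $\mathrm{Diff}_0(S,K)$ fixes $K$ pointwise, so the action of a class in $\mcg(S,K)$ on $K$ is independent of the representative. Since $\eta^n$ acts trivially on $K$, so does $f_n$. Consequently the Franks--Handel lemma applies directly to $f_n$ and the curve $\alpha$, giving
\[
\overline{L}_{\beta,\gamma}\bigl(\eta^n([\alpha])\bigr)\;\le\; L_{\beta,\gamma}\bigl(f_n(\alpha)\bigr)\;\le\; L_{\beta,\gamma}(\alpha)+C\,l_n,
\]
and taking the infimum over representatives $\alpha$ finishes the argument. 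This is exactly what the paper does (it phrases it via a fixed representative $f$ of $\eta$ and writes $f^n = g_{i_1}\cdots g_{i_{l_n}} h'$ with $h'\in\mathrm{Diff}_0(S,K)$, then applies the lemma to $g_{i_1}\cdots g_{i_{l_n}}$ and the curve $h'(\alpha)$; but the content is the same). Note also that your appeal to Lemma~\ref{handel} is misplaced: that lemma takes as \emph{input} a diffeomorphism already fixing $K$ pointwise and upgrades it to one that is the identity on a neighbourhood of a point; it does not produce a pointwise-fixing representative from a $K$-preserving one.
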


\begin{proof}[Proof of the corollary]
Let $l_{n}= l_{\overline{\mathcal{G}}}(\eta^{n})$. Take a representative $f$ in $\mathrm{Diff}^{\infty}(S)$ of $\eta$ and, for each $i$, choose a representative $g_{i}$ of $\xi_{i}$. For any  curve $\alpha$  representing a class $[\alpha]$ in $\mathcal{C}_{S,K}$, the curve $f^{n}(\alpha)$ represents the class $\eta^{n}([\alpha])$. Additionally, by hypothesis, we can write $f^{n}=g_{i_{1}}g_{i_{2}} \ldots g_{i_{l_{n}}}h'$, where $1 \leq i_{j} \leq k$ and $h'$ is a diffeomorphism of $S$ isotopic to the identity relative to $K$. Franks and Handel's lemma implies that there exists a constant $C>0$ independent of $n$ and $\alpha$ such that
$$L_{\beta, \gamma}(f^{n}(\alpha)) \leq L_{\beta, \gamma}(h'(\alpha))+C l_{n}.$$
Hence
$$\overline{L}_{\beta, \gamma}(\eta^{n}([\alpha])) \leq L_{\beta, \gamma}(h'(\alpha))+C l_{n}.$$
As $\alpha$ is any curve in the isotopy class of $[\alpha]$,
$$\overline{L}_{\beta, \gamma}(\eta^{n}([\alpha])) \leq \overline{L}_{\beta, \gamma}([\alpha])+C l_{n}.$$
\end{proof}

The above corollary immediately yields the result below.

\begin{cor} \label{spread}
Let $\eta$ be a distorted element in $\pmcg(S,K)$. Then, for any $[\alpha]$ in $\overline{\mathcal{C}}_{S,K}$,
$$\lim_{n \rightarrow + \infty} \frac{\overline{L}_{\beta,\gamma}(\eta^{n}([\alpha]))}{n}=0.$$
\end{cor}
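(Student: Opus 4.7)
The plan is to obtain this as an immediate consequence of the previous corollary combined with the definition of a distorted element. The statement claims that the normalized spread growth along iterates of $\eta$ is subexponential—in fact sublinear—in $n$, and the previous corollary already bounds the spread linearly in the word length $l_{\overline{\mathcal{G}}}(\eta^{n})$.

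First I would unwind the definition of distortion: since $\eta$ is distorted in $\pmcg(S,K)$, viewed inside $\mcg(S,K)$, there exists a finite symmetric set $\overline{\mathcal{G}} = \{\xi_1,\ldots,\xi_k\} \subset \mcg(S,K)$ with $\eta \in \langle \overline{\mathcal{G}}\rangle$ such that
$$\lim_{n \to +\infty} \frac{l_{\overline{\mathcal{G}}}(\eta^n)}{n} = 0.$$
Since $\eta \in \pmcg(S,K)$, it fixes $K$ pointwise, so the hypotheses of the preceding corollary are satisfied. I would emphasize that here we use distortion inside $\mcg(S,K)$ (not just inside $\pmcg(S,K)$); the generating set $\overline{\mathcal{G}}$ is allowed to contain arbitrary mapping classes.

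Then I would apply the preceding corollary with this $\overline{\mathcal{G}}$ and any fixed class $[\alpha] \in \overline{\mathcal{C}}_{S,K}$: there exists a constant $C > 0$ (depending on $\overline{\mathcal{G}}$ and on $\beta, \gamma$, but not on $n$ or $[\alpha]$) such that
$$\overline{L}_{\beta,\gamma}(\eta^n([\alpha])) \leq \overline{L}_{\beta,\gamma}([\alpha]) + C\, l_{\overline{\mathcal{G}}}(\eta^n).$$
Dividing by $n$ gives
$$\frac{\overline{L}_{\beta,\gamma}(\eta^n([\alpha]))}{n} \leq \frac{\overline{L}_{\beta,\gamma}([\alpha])}{n} + C\,\frac{l_{\overline{\mathcal{G}}}(\eta^n)}{n}.$$
The first term tends to $0$ because $\overline{L}_{\beta,\gamma}([\alpha])$ is a fixed nonnegative real number, and the second term tends to $0$ by the distortion hypothesis. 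Since $\overline{L}_{\beta,\gamma}$ is nonnegative, taking $n \to +\infty$ gives the claimed limit.

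There is essentially no obstacle here; the content of the corollary is entirely packaged in the preceding lemma and corollary of Franks--Handel type, and the present statement is just a clean reformulation adapted to the definition of distortion. The only minor point worth flagging is the verification that the constant $C$ in the previous corollary is genuinely independent of $n$ and of the chosen class $[\alpha]$, but this is exactly what the preceding corollary asserts.
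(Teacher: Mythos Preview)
Your proposal is correct and matches the paper's approach exactly: the paper simply states that the previous corollary ``immediately yields'' this result, and your argument---divide the inequality $\overline{L}_{\beta,\gamma}(\eta^{n}([\alpha])) \leq \overline{L}_{\beta,\gamma}([\alpha])+C\, l_{\overline{\mathcal{G}}}(\eta^{n})$ by $n$ and invoke the distortion hypothesis---is precisely the intended unpacking.
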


We are now ready to start the proof of Theorem \ref{main2}.

\subsubsection{End of the proof of Theorem \ref{main2}}

The following proposition completes the proof of Theorem \ref{main2}.

\begin{prop} \label{dehntwwists}
Let $\xi$ be a non-trivial element of $\mcg(S,K)$. Suppose that $\xi$ fixes $K$ pointwise and is equal to a finite composition of Dehn twists about disjoint simple loops of $S-K$ which are pairwise non-isotopic relative to $K$, which are not homotopic to a point relative to $K$ and which are homotopically trivial in $S$. Then $\xi$ is not distorted in $\mcg(S,K)$.
\end{prop}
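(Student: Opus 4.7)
The strategy is to exhibit, for any such $\xi$, an arc class $[\alpha] \in \overline{\mathcal{C}}_{S,K}$ and test data $(\beta, \gamma)$ for which $\overline{L}_{\beta,\gamma}(\xi^{n}([\alpha]))$ grows at least linearly in $n$; combined with Corollary \ref{spread}, this will contradict $\xi$ being distorted in $\mcg(S,K)$.

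Write $\xi = \prod_{i=1}^{m} T_{\beta_{i}}^{k_{i}}$ with each $k_{i} \neq 0$, and with $\beta_{i}$ bounding a disk $D_{i} \subset S$; put $K_{i} := D_{i} \cap K$, which is nonempty by the hypothesis that $\beta_{i}$ is not nullhomotopic rel $K$. The first observation is that pairwise non-isotopy of the $\beta_{i}$ relative to $K$ forces the $K_{i}$ to be pairwise distinct: two disjoint loops bounding disks with identical $K$-content would cobound an annulus disjoint from $K$, producing an isotopy rel $K$ between them.

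For the main case, pick $D_{j}$ minimal for inclusion among the $D_{i}$'s with $K_{j} \subsetneq K$; such a $j$ exists whenever $m \geq 2$ (the $K_{i}$ being distinct nonempty subsets of $K$, at most one can equal $K$) and also whenever $m = 1$ with $K \not\subset D_{1}$. Using the nesting structure of the $D_i$'s, I would pick $x_{1} \in K_{j}$ lying in an innermost region of $D_{j}$ and $x_{2} \in K \setminus D_{j}$ in a region adjacent to $\beta_{j}$, so that a simple smooth arc $\alpha = \gamma$ from $x_{1}$ to $x_{2}$ exists, crossing $\beta_{j}$ transversally at exactly one point and disjoint from every other $\beta_{i}$. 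Because the $\beta_{i}$'s are pairwise disjoint, the Dehn twists commute and $\xi^{n} = \prod_{i} T_{\beta_{i}}^{n k_{i}}$; since $\alpha$ meets only $\beta_{j}$ among the twist curves, the factors $T_{\beta_{i}}^{n k_{i}}$ for $i \neq j$ fix $[\alpha]$ and leave $\xi^{n}([\alpha]) = T_{\beta_{j}}^{n k_{j}}([\alpha])$. The standard effect of a Dehn twist on an arc crossing its curve once then yields $\overline{L}_{\beta_{j}, \gamma}(\xi^{n}([\alpha])) = |n k_{j}|$, which grows linearly in $n$ and contradicts Corollary \ref{spread}.

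The remaining case is $m = 1$ with $K \subset D_{1}$ (necessarily $S \neq \s^{2}$, for otherwise $\xi$ is trivial). Here no $x_{2} \in K$ lies outside the disk bounded by $\beta_{1}$, so the Franks-Handel lemma as stated does not directly apply with $\beta = \beta_{1}$. The plan is to extend the spread inequality to allow both endpoints of $\gamma$ to lie in the test disk $D$; since Franks-Handel's argument is essentially local near the test curve, the extension carries over. With this extension, take $\beta = \beta_{1}$, choose any $x_{1}, x_{2} \in K$, let $\gamma$ be a reference arc inside $D_{1}$ running close to $\beta_{1}$, and let $\alpha$ be a path from $x_{1}$ to $x_{2}$ exiting $D_{1}$ through $\beta_{1}$, traversing a non-trivial loop of $S$, and returning. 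The $n$-fold Dehn twist produces $n$ windings of $\xi^{n}(\alpha)$ around $\beta_{1}$ in a thin annular collar of $\beta_{1}$ on the $D_{1}$-side; by passing $\gamma$ close enough to $\beta_{1}$ we create roughly $n$ transverse intersections with these windings, from which subloops isotopic to $\beta_{1}^{k}$ with $k$ of order $n$ can be extracted. The spread again grows linearly, contradicting the extended Corollary \ref{spread}. The chief technical obstacle in the whole argument is justifying this extension of the Franks-Handel spread lemma; once granted, the rest reduces to the commutation of disjoint Dehn twists and the standard local behavior of a Dehn twist on an arc crossing its curve.
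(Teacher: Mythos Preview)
Your overall strategy matches the paper's: use spread with respect to a well-chosen $(\beta,\gamma)$ and Corollary~\ref{spread}. However, there are two genuine gaps.

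\textbf{The arc may have to cross two twist curves.} Your construction of $\alpha$ crossing only $\beta_{j}$ breaks down because the connected component of $S\setminus\bigcup_{i}\beta_{i}$ adjacent to $\beta_{j}$ on the outside need not contain any point of $K$. Concretely, take three curves: $\beta_{1}$ bounding a disk $D_{1}$ containing all of $K$, and inside $D_{1}$ two disjoint curves $\beta_{2},\beta_{3}$ bounding disks $D_{2},D_{3}$ with $K\subset D_{2}\cup D_{3}$ and each $K_{i}$ nonempty. Then $D_{2}$ is minimal, but the region $D_{1}\setminus(D_{2}\cup D_{3})$ adjacent to $\beta_{2}$ contains no point of $K$, so no $x_{2}$ as you describe exists. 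The paper's Lemma~\ref{cases} handles exactly this by allowing $\alpha$ to meet \emph{two} curves $\beta$ and $\gamma$, with $\beta$ (resp.\ $\gamma$) homotopic rel $\{x_{1},x_{2}\}$ to a small loop around $x_{1}$ (resp.\ $x_{2}$); the subsequent spread estimate then has to treat this two-curve situation separately.

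\textbf{The spread lower bound is over an isotopy class rel $K$, not just at a single representative.} Even in your one-curve case, ``the standard effect of a Dehn twist'' gives linear growth of $L_{\beta_{j},\gamma}$ on the specific curve $T_{\beta_{j}}^{nk_{j}}(\alpha)$, but $\overline{L}_{\beta_{j},\gamma}$ is the \emph{infimum} over all curves in the class $[\alpha]$ in $\overline{\mathcal{C}}_{S,K}$. One has to show that no isotopy of $\alpha$ rel $K$ can undo the winding. The paper does this by blowing up at $x_{1},x_{2}$, lifting to the universal cover, and using that $K$ is perfect: since $x_{1}$ is accumulated by points of $K$, there is a direction $z$ on the blown-up circle $C_{1}$ fixed by every element of $\mathrm{Diff}_{0}(S,K)$, which pins down where lifts of representatives of $[\alpha]$ must start (the ``Claim'' in the paper's proof). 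Without this, your linear lower bound on $\overline{L}$ is unjustified. Your ``remaining case'' inherits the same issue, and the paper addresses it not by extending the Franks--Handel lemma but by taking $\alpha$ to be a loop with $\alpha(0)=\alpha(1)\in K$ crossing $\beta_{1}$ twice and then running the same lifting argument.
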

 
\begin{proof}
Denote by $f$ a representative of $\xi$ in $\mathrm{Diff}^{\infty}(S)$ which is equal to the identity outside small tubular neighbourhoods of the loops appearing in the decomposition of $\xi$. Also denote by $\mathcal{C}(f)$ the set of loops appearing in this decomposition. By hypothesis, any curve in $\mathcal{C}(f)$ bounds a disk in $S$.\\

\underline{First case:} Suppose that at least two connected components of the complement of the curves of $\mathcal{C}(f)$ contain points of $K$.

\begin{lem} \label{cases}
There exists a simple smooth curve $\alpha : [0,1] \rightarrow S$ such that its endpoints $\alpha(0)=x_{1}$ and $\alpha(1)=x_{2}$ belong to the Cantor set $K$ and with one of the following properties.
\begin{enumerate}
\item The curve $\alpha$ meets exactly one loop $\beta$ of $\mathcal{C}(f)$ transversely in only one point. Moreover, this loop $\beta$ is isotopic to a small loop around $x_{1}$ relative to $\left\{x_{1},x_{2} \right\}$.
\item The curve $\alpha$ meets exactly two loops $\beta$ and $\gamma$ of $\mathcal{C}(f)$ transversally and in one point. Moreover, the loop $\beta$ is isotopic to a small loop around $x_{1}$ relative to $\left\{ x_{1}, x_{2} \right\}$ and the loop $\gamma$ is isotopic to a small loop around $x_{2}$ relative to $\left\{ x_{1}, x_{2} \right\}$.
\end{enumerate}
\end{lem}

\begin{proof}
In order to carry out this proof, we have to introduce some notation. Suppose that the surface $S$ is different from the sphere. Then, for any loop $\gamma$ in $\mathcal{C}(f)$, there exists a unique connected component of $S-\gamma$ which is homeomorphic to an open disk. We call this connected component the interior of $\gamma$. In the case where $S$ is a sphere, we fix a point on this surface which does not belong to any curve of $\mathcal{C}(f)$. Then, for any loop $\gamma$ of $\mathcal{C}(f)$, we call interior of $\gamma$ the connected component of $S-\gamma$ which does not contain this point.\\  

To each curve $\gamma$ in $\mathcal{C}(f)$, we will associate a number $l(\gamma) \in \mathbb{N}$ which we call its level. For any curve $\gamma$ of $\mathcal{C}(f)$ which is not contained in the interior of a loop of $\mathcal{C}(f)$, we set $l(\gamma)=0$. Denote by $\mathcal{C}_{i}(f)$ the set of curves of $\mathcal{C}(f)$ whose level is $i$. The following statement defines inductively the level of any curve in $\mathcal{C}(f)$: for any $i \geq 1$, a curve $\gamma$ in $\mathcal{C}(f)- \mathcal{C}_{i-1}(f)$ satisfies $l(\gamma)=i$ if it is not contained in the interior of a curve in $\mathcal{C}(f)- \mathcal{C}_{i-1}(f)$. See Figure \ref{level} for an example. The represented loops are the elements of $\mathcal{C}(f)$ in this example and the numbers beside them are their levels.

\begin{figure}[ht]
\begin{center}
\includegraphics[scale=0.5]{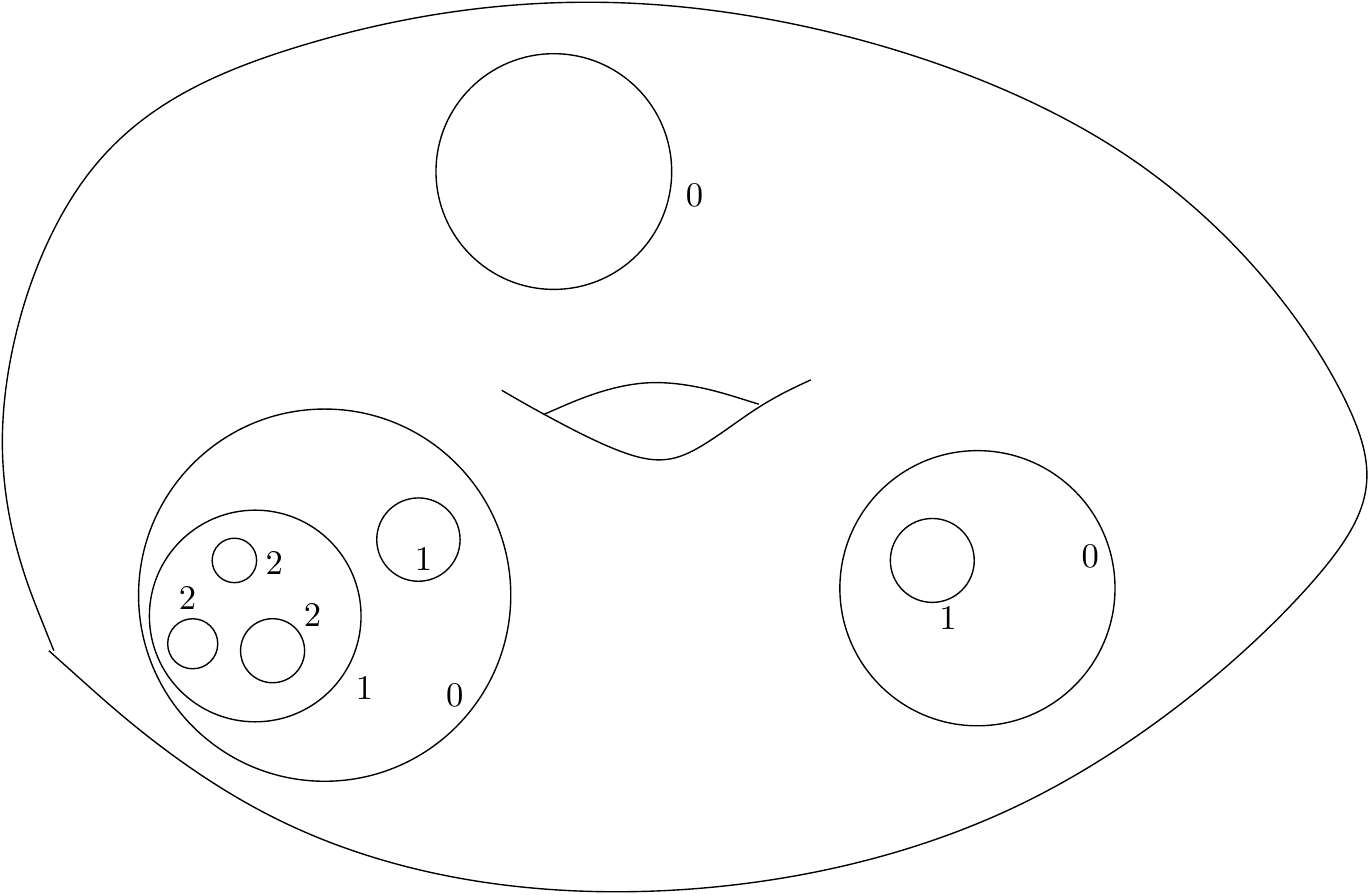}
\end{center}
\caption{Level of curves of $\mathcal{C}(f)$.}
\label{level}
\end{figure}

Now, we are ready for the proof. Take a loop $\beta$ in $\mathcal{C}(f)$ whose level is maximal. As the level of this curve is maximal, its interior does not contain any curve of $\mathcal{C}(f)$. However, its interior has to contain a point $x_{1}$ of $K$: otherwise, the loop $\beta$ would be homotopically trivial.\\ 

If $l(\beta)=0$, denote by $U$ the complement of the interiors of the curves of level $0$ in $\mathcal{C}(f)$. Otherwise, let $U$ be the complement of the interiors of the curves of level $l(\beta)$ in the interior of $\delta$, where $\delta$ is the unique loop of level $l(\beta)-1$ whose interior contains $\beta$.\\

If the open set $U$ contains some point $x_{2}$ of $K$, it is not difficult to find a curve $\alpha$ which satisfies the first property of the lemma. Otherwise, there exists a loop $\gamma$ of level $l(\beta)$  which is different from $\beta$. If $l(\beta)=0$, this is a consequence of the hypothesis made for the first case. If $l(\beta)\neq 0$, we can further require that this loop is contained in the interior of $\delta$: if this was not the case, $\beta$ would be isotopic to $\delta$, a contradiction. In this case, take a path $\alpha$ going from $x_{1}$ to $U$ crossing $\beta$ once and from $U$ to a point $x_{2}$ of $K$ contained in the interior of $\gamma$ crossing $\gamma$ once. This path satisfies the second property of the lemma.
\end{proof}

In this case, we will prove Proposition \ref{dehntwwists} only in the case where the surface $S$ is different from the sphere. The case of the sphere is similar and is left to the reader. Denote by $S'$ the surface obtained from $S$ by blowing up the points $x_{1}$ and $x_{2}$ (replacing these points with circles). Denote by $C_{1} \subset S'$ the circle which projects to the point $x_{1}$ in $S$ and by $C_{2} \subset S'$ the circle which projects to the point $x_{2}$. Denote by $\tilde{S}'$ the universal cover of $S'$. The space  $\tilde{S}'$ can be seen as a subset of the universal cover $\mathbb{H}^{2}$ of the double of $S'$, which is the closed surface obtained by gluing $S'$ with itself by identifying their boundaries. The curve $\alpha$ lifts to a smooth curve $\alpha'$ of $S'$ whose endpoints are $x'_{1} \in C_{1}$ and $x'_{2} \in C_{2}$. By abuse of notation, we denote by $\beta$ the lift of the curve $\beta$ to $S'$.\\

As the diffeomorphism $f$ fixes both points $x_{1}$ an $x_{2}$, it induces a smooth diffeomorphism $f'$ of $S'$, acting by the differential of $f$ on the circles $C_{1}$ and $C_{2}$. Observe that the diffeomorphism $f'$ preserves $C_{1}$ and $C_{2}$ and is not a priori isotopic to the identity. As the compact set $K$ is perfect, the point $x_{1}$ is accumulated by points of $K$ which are fixed under $f$. Denote by $z$ a point of $C_{1}$ which corresponds to a direction in $ T_{x_1}(S)$ accumulated by points of $K$. Then the diffeomorphism $f'$ fixes the point $z$.\\

Denote by $\tilde{C}_{1}$ a lift of $C_{1}$, \emph{i.e.} a connected component of $p^{-1}(C_{1})$, where $p: \tilde{S}' \rightarrow S'$ is the projection, and by $\tilde{z}$ a lift of the point $z$ contained in $\tilde{C}_{1}$. Denote by $\tilde{\alpha}'$ a lift of $\alpha'$ which meets $\tilde{C}_{1}$ and by $\tilde{\beta}$ the lift of $\beta$ which meets $\tilde{\alpha}'$. The curve $\tilde{\alpha}'$ joins the points $\tilde{x}_{1}' \in \tilde{C}_{1}$ and $\tilde{x}_{2}' \in \tilde{C}_{2}$, where $\tilde{C}_{2}$ is a connected component of $p^{-1}(C_{2})$. Denote by $\tilde{f}': \tilde{S}' \rightarrow \tilde{S}'$ the lift of $f'$ which fixes the point $\tilde{z}$. This lift $\tilde{f}'$ preserves $\tilde{C}_{1}$. Denote by $T$ the deck transformation corresponding to $\tilde{\beta}$ (or equivalently $\tilde{C}_{1}$). Then the diffeomorphism $\tilde{f}'$ fixes all the points of the orbit of $\tilde{z}$ under $T$. Hence the orbit of $\tilde{x}'_{1}$ under $\tilde{f}'$ lies on $\tilde{C}_{1}$ between two consecutive points of the orbit of $\tilde{z}$ under $T$. Up to translating $\tilde{z}$, we can suppose that these points are $\tilde{z}$ and $T(\tilde{z})$.\\

\begin{figure}[ht]
\begin{center}
\includegraphics[scale=0.5]{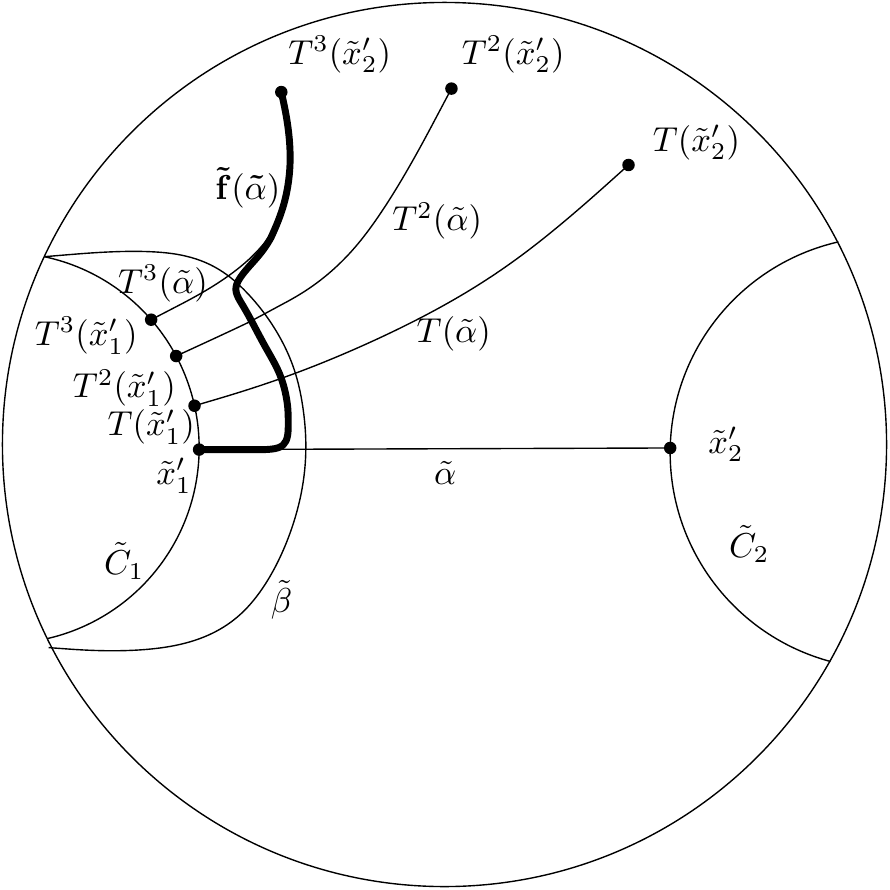}
\end{center}
\caption{$\tilde{f}(\tilde{\alpha})$ in the case $k=3$}
\label{case211}
\end{figure}

First suppose that the only loop in $\mathcal{C}(f)$ met by the curve $\alpha$ is $\beta$. In this case, by definition of a Dehn twist, there exists $k \neq 0$ such that, for any integer $n$, $\tilde{f}'^{n}(\tilde{x}'_{2}) \in T^{kn}(\tilde{C}_{2})$ (see Figure \ref{case211}). For any $1 \leq j \leq kn-1$, the curve $T^{j}(\tilde{\alpha}')$ separates $\tilde{S}'$ into two connected components. Observe that the point $\tilde{f}^{n}(\tilde{x}'_{1})$ belongs to one of them and that the component $T^{kn}(C_{2})$ belongs to the other one. Hence the curve $\tilde{f}'^{n}(\tilde{\alpha}')$ has to meet each of the curves $T^{j}(\tilde{\alpha}')$, for $1 \leq j \leq kn-1$. Denote by $\tilde{y}_{1}$ the first intersection point of the curve $\tilde{f}'^{n}(\tilde{\alpha})$ with the curve $T(\tilde{\alpha}')$ and by $\tilde{y}_{2}$ the last intersection point of $\tilde{f}'^{n}(\tilde{\alpha}')$ with the curve $T^{kn-1}(\tilde{\alpha}')$.\\

As the curve $T(\tilde{\alpha}')$ belongs to the same component of the complement of $T^{kn-1}(\tilde{\alpha}')$ as the point $\tilde{x}'_{1}$, the curve $\tilde{\alpha}'$ meets the point $\tilde{y}_{1}$ before meeting the point $\tilde{y}_{2}$. Consider the curve $\tilde{\gamma}$ which is the concatenation of the segment of $\tilde{f}'^{n}(\tilde{\alpha}')$ between $\tilde{y}_{1}$ and $\tilde{y}_{2}$ and the segment of the curve $T^{kn-1}(\tilde{\alpha}')$ between $\tilde{y}_{2}$ and $T^{kn}(\tilde{y}_{1})$ The projection on $S$ of $\tilde{\gamma}$ is isotopic to $\beta^{kn-2}$ relative to $x_{1}$ and $x_{2}$. Hence $L_{\beta,\alpha}(f^{n}(\alpha)) \geq \left|k \right| n-2$.\\ 

In the above proof, we just used the fact that the curve $\tilde{\alpha}'$ joined a point of the boundary component $\tilde{C}_{1}$ which is between $\tilde{z}$ and $T(\tilde{z})$ to the boundary component $\tilde{C}_{2}$. We will prove the following claim.\\

\begin{claim} \emph{Any curve $\delta$ of $\mathcal{C}_{S,K}$ which represents the same class as $\alpha$ in $\overline{\mathcal{C}}_{S,K}$ has a lift $\tilde{\delta}'$ to $\tilde{S}'$ with the following property. The curve $\tilde{\delta}'$ joins a point of the boundary component $\tilde{C}_{1}$ which is between $\tilde{z}$ and $T(\tilde{z})$ to the boundary component $\tilde{C}_{2}$.}
\end{claim}

Hence, for any curve $\delta$ isotopic to $\alpha$ relative to $K$, we obtain that, for any $n$, $L_{\beta, \alpha}(f^{n}(\delta)) \geq kn-2$ and $\overline{L}_{\beta,\alpha}(\xi^{n}([\alpha])) \geq \left| k \right| n-2$. By Corollary \ref{spread}, the element $\xi$ is undistorted in the group $\mcg(S,K)$.\\

\begin{proof}[Proof of the claim]
By definition, there exists a continuous path $(g_{t})_{t \in [0,1]}$ of diffeomorphisms in $\mathrm{Diff}_{0}^{\infty}(S,K)$ such that $g_{0}=Id_{S}$ and $g_{1}(\alpha)=\delta$. This path of diffeomorphisms lifts (by acting by the differential on $C_{1}$ and $C_{2}$) to a path of diffeomorphisms $(g'_{t})_{t \in [0,1]}$ of $S'$ which in turn lifts to a path $(\tilde{g}'_{t})_{t \in [0,1]}$ of diffeomorphisms such that $\tilde{g}'_{0}=Id_{\tilde{S}'}$.\\

As the direction in $S$ corresponding to $z \in C_1$ is accumulated by points of $K$, we have that for any $t$, $g'_{t}(z)=z$, then, for any $t$, $\tilde{g}'_{t}(\tilde{z})=\tilde{z}$ and $\tilde{g}'_{t}(T(\tilde{z}))=T(\tilde{z})$. Moreover, for any $t$, the diffeomorphism $\tilde{g}'_{t}$ preserves $\tilde{C}_{1}$ and $\tilde{C}_{2}$. Therefore, for any $t$, the point $\tilde{g}'_{t}(\tilde{\alpha}'(0))$ lies between the points $\tilde{z}$ and $T(\tilde{z})$ and the point $\tilde{g}'_{t}(\tilde{\alpha}'(1))$ belongs to $\tilde{C}_{2}$. In particular, the curve $\tilde{g}'_{1}(\tilde{\alpha}')$ is a lift of $\delta$ which satisfies the required properties.
\end{proof}

Now, suppose that the curve $\alpha$ meets a loop of $\mathcal{C}(f)$ which is different from $\beta$, \emph{i.e.} the second case in Lemma \ref{cases} occurs. Denote by $\tilde{\gamma}$ a lift of $\gamma$ to $\tilde{S}'$. Observe that there exists $k \neq 0$ such that, for any $n$, the curve $\tilde{f}'^{n}(\tilde{\alpha}')$ meets $T^{kn}(\tilde{\gamma})$. As the curves $T^{j}(\tilde{\alpha})$ separate the point $\tilde{x}_{1}$ from the curve $T^{kn}(\tilde{\gamma})$ for any $1 \leq j \leq kn-1$, this case can be handled in the same way as the case where the curve $\alpha$ meets only one loop in $\mathcal{C}(f)$.\\

\underline{Second case:} Only one component of the complement of the curves of $\mathcal{C}(f)$ meets $K$, \emph{i.e.} the diffeomorphism $f$ is some power of a Dehn twist about this curve. In this case, we can take this curve as $\beta$ and we take any simple curve $\alpha:[0,1] \rightarrow K$ in $\mathcal{C}_{S,K}$ with the following properties.
\begin{enumerate}
\item $\alpha(0)=\alpha(1)$.
\item The curve $\alpha$ is not homotopic relative to its endpoints to a curve contained in the disk bounded by $\beta$.
\item The curve $\alpha$ meets $\beta$ in exactly two points. 
\end{enumerate}
This case is then handled in the same way as the first case.
\end{proof}
\end{section}

\begin{section}{Independence of the surface}

Let $S$ be a closed surface and $K$ be any closed subset of $S$. Recall that $\mathfrak{diff}_{S}(K)=\mcg(S, K)/\pmcg(S, K)$. This group can also be seen as the quotient of the group $\mathrm{Diff}^{\infty}(S, K)$ consisting of diffeomorphisms that preserve $K$ by the subgroup of $\mathrm{Diff}^{\infty}(S, K)$  consisting of elements which fix $K$ point-wise.\\

In what follows, we call disk the image of the unit closed disk $\mathbb{D}^{2}$ under an embedding $\mathbb{D}^{2} \hookrightarrow S$.\\

Suppose that the closed set $K$ is contained in the interior of a disk $D$. The goal of this section is to prove the following proposition.\\

\begin{prop} \label{representative}
Any element of $\mathfrak{diff}_{S}(K)$ has a representative in $\mathrm{Diff}_{0}^{\infty}(S)$ which is supported in $D$.
\end{prop}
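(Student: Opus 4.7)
Given $f \in \mathrm{Diff}^{\infty}(S,K)$, the plan is to produce $g \in \mathrm{Diff}^{\infty}(S)$ supported in $D$ with $g|_{K} = f|_{K}$. Such a $g$ is automatically isotopic to the identity in $S$: its restriction to $D$ is a diffeomorphism equal to the identity near $\partial D$, and by the classical theorem of Smale (contractibility of $\mathrm{Diff}^{\infty}(D, \partial D)$) any such diffeomorphism of the closed disk is isotopic to the identity rel boundary; extending this isotopy by the identity on $S \setminus D$ places $g$ in $\mathrm{Diff}_{0}^{\infty}(S)$. Thus it suffices to construct $g$ supported in $D$ with $g|_{K} = f|_{K}$.

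The main step is to reduce to the case where $f$ preserves the disk $D$ setwise. Let $D' := f(D)$; this is a smoothly embedded closed disk in $S$ containing $K$ in its interior. The boundary curves $\partial D$ and $\partial D'$ lie in the open surface $S \setminus K$, and each bounds, inside $S$, a closed disk whose interior contains $K$. By the topological classification of simple closed curves on surfaces, any two such curves are isotopic as simple closed curves of $S \setminus K$. Applying the isotopy extension theorem inside $S \setminus K$ produces an ambient isotopy with compact support (hence with support bounded away from $K$), which then extends by the identity on a neighborhood of $K$ to a smooth ambient isotopy of $S$. Its time-one map $\sigma$ lies in $\mathrm{Diff}_{0}^{\infty}(S)$, fixes $K$ pointwise, and satisfies $\sigma(\partial D') = \partial D$. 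Since $\sigma$ preserves $K$ setwise, $\sigma(D')$ is a closed disk bounded by $\partial D$ and containing $K$ in its interior; because the other complementary component of $\partial D$ in $S$ is disjoint from $K$, we conclude $\sigma(D') = D$.

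Setting $g_{0} := \sigma \circ f$ gives a diffeomorphism in $\mathrm{Diff}^{\infty}(S,K)$ that preserves $D$ setwise and satisfies $g_{0}|_{K} = \sigma|_{K} \circ f|_{K} = f|_{K}$. The restriction $g_{0}|_{\partial D}$ is an orientation-preserving circle diffeomorphism, hence isotopic to $\mathrm{Id}_{\partial D}$ inside $\mathrm{Diff}^{\infty}(\partial D)$. Since $K$ is compactly contained in $\mathrm{int}(D)$, we may fix a closed collar $C \subset D$ of $\partial D$ disjoint from $K$, and we use the boundary isotopy within $C$ to modify $g_{0}$, producing a diffeomorphism $g_{1}$ of $D$ which agrees with $g_{0}$ (hence with $f$) on $K$ and equals the identity in a neighborhood of $\partial D$. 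Extending $g_{1}$ by the identity on $S \setminus D$ yields the desired $g \in \mathrm{Diff}^{\infty}(S)$ supported in $D$ with $g|_{K} = f|_{K}$.

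The principal obstacle is the construction of $\sigma$ in the second paragraph. The assertion that $\partial D$ and $\partial D'$ are isotopic in $S \setminus K$ is delicate when $K$ is, for instance, a Cantor set, since $K$ lacks a regular neighborhood that would allow one to compare the two boundary curves via a tubular-neighborhood argument. One must instead work directly from the topology of the complementary pieces of each curve in $S \setminus K$ (namely, a once-punctured disk minus $K$ on one side and a copy of $S \setminus D$ on the other) and invoke the classification theorem for simple closed curves on surfaces, which guarantees that curves with matching complementary topology are isotopic. Once this isotopy of curves is promoted, via isotopy extension in the open manifold $S \setminus K$, to an ambient isotopy of $S$ fixing a neighborhood of $K$, the remaining constructions are routine applications of isotopy extension and collar-neighborhood techniques.
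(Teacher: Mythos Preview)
Your argument contains a genuine gap at its central step. The claim that $\partial D$ and $\partial D' = f(\partial D)$ are isotopic as curves in $S \setminus K$ is false in general. The ``classification of simple closed curves'' you invoke says that two separating curves whose complementary pieces are homeomorphic lie in the same orbit of the \emph{mapping class group} of $S \setminus K$; it does not say they are isotopic. Here is an explicit counterexample: take $S = T^{2}$, let $K \subset \mathring D$ be any closed set with at least two points (two points, or a Cantor set), and choose a non\-separating simple closed curve $\gamma \subset T^{2}\setminus K$ whose intersection with $D$ is a single arc separating $K$ into two nonempty parts. Then $i(\gamma,\partial D) = 2$ in $T^{2}\setminus K$, so by the standard fact that $T_{\gamma}([\alpha]) = [\alpha]$ iff $i(\gamma,\alpha)=0$, the Dehn twist $T_{\gamma}$ moves the isotopy class of $\partial D$. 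Taking $f = T_{\gamma}$ (which is supported near $\gamma$, hence lies in $\mathrm{Diff}^{\infty}(S,K)$ and even fixes $K$ pointwise), the disk $D' = f(D)$ has $\partial D'$ \emph{not} isotopic to $\partial D$ in $S\setminus K$, so no $\sigma$ with the properties you require exists. Your own final paragraph correctly identifies this step as the obstacle, but the justification offered there does not hold.

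The paper avoids this problem by working from the inside rather than the outside. Instead of trying to carry the large disk $f(D)$ back to $D$, it chooses a small compact neighborhood $\Sigma$ of $K$ inside $\mathring D$ with $f(\Sigma) \subset \mathring D$ as well; fills in the holes of the components of $\Sigma$ to obtain disjoint disks $D'_{1},\dots,D'_{m}$ with $D'_{j}\cup f(D'_{j}) \subset \mathring D$; and then uses the elementary fact (Lemma~\ref{diskisotopy}) that any two embedded disks in a surface are ambiently isotopic to build, inductively, a diffeomorphism $g$ supported in $D$ with $g_{|D'_{j}} = f_{|D'_{j}}$ for all $j$. Since $K \subset \bigcup_{j} D'_{j}$, this $g$ agrees with $f$ on $K$. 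The point is that all the required isotopies take place \emph{inside} $D$, where there is no obstruction.
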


The proof of this proposition also implies that $\mathfrak{diff}_{S}(K)$ is isomorphic to  $\mcg(S,K)/\pmcg(S,K)$.\\

The following corollary implies that, to prove something about groups of the form $\mathfrak{diff}_{S}(K)$, it suffices to prove it in the case where the surface $S$ is the sphere.

\begin{cor} \label{invariance}
Let $S$ an $S'$ be surfaces and $K \subset S$ and $K' \subset S'$ be closed subsets. Suppose that there exist disks $D_{S} \subset S$ and $D_{S'} \subset S'$ as well as a diffeomorphism $\varphi : D_{S} \rightarrow D_{S'}$ such that:
\begin{enumerate}
\item The closed set $K$ is contained in the interior of the disk $D_{S}$.
\item The closed set $K'$ is contained in the interior of the disk $D_{S'}$.
\item $\varphi(K)=K'$.
\end{enumerate}
Then, the group $\mathfrak{diff}_{S}(K)$ is isomorphic to $\mathfrak{diff}_{S'}(K')$.
\end{cor}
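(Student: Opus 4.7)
The plan is to take the isomorphism to be conjugation by $\varphi|_{K} : K \to K'$. The assignment $\Phi(f) = (\varphi|_{K}) \circ f \circ (\varphi|_{K})^{-1}$ defines an obvious group isomorphism $\Phi : \mathrm{Homeo}(K) \to \mathrm{Homeo}(K')$ between ambient homeomorphism groups of the Cantor sets, so the content of the corollary reduces to showing that $\Phi$ sends the subgroup $\mathfrak{diff}_{S}(K)$ onto the subgroup $\mathfrak{diff}_{S'}(K')$.

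For the inclusion $\Phi(\mathfrak{diff}_{S}(K)) \subseteq \mathfrak{diff}_{S'}(K')$, I would argue as follows. Given $f \in \mathfrak{diff}_{S}(K)$, first pick a slightly smaller closed disk $D_{0}$ with $K \subset \mathrm{int}(D_{0}) \subset D_{0} \subset \mathrm{int}(D_{S})$, and apply Proposition \ref{representative} (with $D_{0}$ in place of $D$) to produce a representative $\tilde f \in \mathrm{Diff}^{\infty}(S)$ of $f$ preserving $K$ and supported in $D_{0}$. Since the support of $\tilde f$ is contained in $\mathrm{int}(D_{S})$, the diffeomorphism $\tilde f$ is equal to the identity on an open neighbourhood of $\partial D_{S}$. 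The map $g : S' \to S'$ defined by $g = \varphi \circ \tilde f \circ \varphi^{-1}$ on $D_{S'}$ and $g = \mathrm{Id}$ on $S' \setminus D_{S'}$ is therefore a well-defined smooth diffeomorphism of $S'$. Because $\tilde f$ preserves $K$ and $\varphi(K) = K'$, the diffeomorphism $g$ preserves $K'$, and its restriction to $K'$ equals $\Phi(f)$, so $\Phi(f) \in \mathfrak{diff}_{S'}(K')$.

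The symmetric construction, using $\varphi^{-1} : D_{S'} \to D_{S}$ and applying Proposition \ref{representative} in the surface $S'$, shows that $\Phi^{-1}(\mathfrak{diff}_{S'}(K')) \subseteq \mathfrak{diff}_{S}(K)$. Combining the two inclusions yields the desired isomorphism $\mathfrak{diff}_{S}(K) \cong \mathfrak{diff}_{S'}(K')$.

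The only real obstacle in carrying out this plan is ensuring that the transported diffeomorphism extends smoothly across $\partial D_{S'}$, and this is precisely what Proposition \ref{representative} provides, once the support of $\tilde f$ has been arranged to lie in the interior of $D_{S}$ by the mild shrinking step above. Everything else amounts to routine verification that conjugation by $\varphi|_{K}$ respects composition and inverses, and that the bijection already has an inverse at the level of homeomorphism groups.
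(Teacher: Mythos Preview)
Your argument is correct and follows the same strategy as the paper: use Proposition~\ref{representative} to obtain a representative supported inside the disk, then transport by conjugation with $\varphi$. The paper factors through an auxiliary disk $\mathbb{D}^{2}$, defining isomorphisms $\Psi:\mathfrak{diff}_{\mathbb{D}^{2}}(\psi^{-1}(K))\to\mathfrak{diff}_{S}(K)$ and $\Psi':\mathfrak{diff}_{\mathbb{D}^{2}}(\psi^{-1}(K))\to\mathfrak{diff}_{S'}(K')$ and checking injectivity by hand; you instead work directly with conjugation by $\varphi|_{K}$ on the ambient homeomorphism groups, which makes injectivity automatic and leaves only the two containments to verify. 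This is a mild streamlining of the same idea, and your shrinking step to a smaller disk $D_{0}\subset\mathrm{int}(D_{S})$ is exactly what is needed to guarantee the smooth extension across $\partial D_{S'}$.
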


\begin{proof}
Let $\psi: \mathbb{D}^{2} \rightarrow D_{S}$ be a diffeomorphism. This diffeomorphism induces a morphism
$$\begin{array}{rrcl}
\Psi: & \mathfrak{diff}_{\mathbb{D}^{2}}(\psi^{-1}(K)) & \rightarrow & \mathfrak{diff}_{S}(K) \\
 & \xi & \mapsto & \psi \xi \psi^{-1} \\
\end{array}
.$$
By Proposition \ref{representative}, this morphism is onto. Let us prove that it is into. Let $\xi$ and $\xi'$ be elements of $\mathfrak{diff}_{\mathbb{D}^{2}}(\psi^{-1}(K))$ such that $\Psi(\xi)=\Psi(\xi')$. Take representatives $f$ and $f'$ of $\xi$ and $\xi'$ in $\mathrm{Diff}_{0}^{\infty}(\mathbb{D}^{2})$. Then there exists a diffeomorphism $g$ in $\mathrm{Diff}_{0}^{\infty}(S)$ which fixes $K$ pointwise such that $\psi f \psi^{-1}=g \psi f' \psi^{-1}$. The support of $g$, which is also the support of $\psi f f'^{-1} \psi^{-1}$, is contained in $D_{S}$. Hence, there exists a diffeomorphism $g'$ in $\mathrm{Diff}_{0}^{\infty}(\mathbb{D}^{2})$ which fixes $\psi^{-1}(K)$ pointwise such that $g=\psi g'\psi^{-1}$. Therefore, $f=g'f'$ and $\xi=\xi'$.\\

For the same reason, the map
$$ \begin{array}{rrcl} 
\Psi': & \mathfrak{diff}_{\mathbb{D}^{2}}(\psi^{-1}(K)) & \rightarrow & \mathfrak{diff}_{S'}(K') \\
 & \xi & \mapsto & \varphi\psi \xi \psi^{-1}\varphi^{-1} \\
\end{array}
$$
is an isomorphism. Hence the map $\Psi' \Psi^{-1}$ is an isomorphism between $\mathfrak{diff}_{S}(K)$ and $\mathfrak{diff}_{S'}(K')$.
\end{proof}

To prove Proposition \ref{representative}, we need the following lemma (see \cite{Hir} Theorem 3.1 p.185).

\begin{lem} \label{diskisotopy}
Let $\Sigma$ be a surface and $e_{1}, e_{2} : \mathbb{D}^{2} \rightarrow \Sigma$ be orientation preserving smooth embeddings such that $e_{1}(\mathbb{D}^{2}) \cap \partial \Sigma = \emptyset$ and $e_{2}(\mathbb{D}^{2}) \cap \partial \Sigma = \emptyset$. Then there exists a diffeomorphism $h$ in $\mathrm{Diff}_{0}^{\infty}(\Sigma)$ such that $h \circ e_{1}=e_{2}$.
\end{lem}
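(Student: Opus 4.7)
The plan is to execute the classical three-step disk isotopy argument: first, translate $e_1(0)$ onto $e_2(0)$ by an ambient isotopy of $\Sigma$; second, align the derivatives at the resulting common centre; third, apply the Alexander trick combined with the isotopy extension theorem.

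For the first two steps, reduce to the case where $\Sigma$ is connected (extending $h$ by the identity on the other components). Choose a smooth embedded arc $\gamma$ in $\Sigma \setminus \partial\Sigma$ joining $e_1(0)$ to $e_2(0)$ and integrate a time-dependent vector field supported in a tubular neighbourhood of $\gamma$ disjoint from $\partial\Sigma$; this yields an isotopy $(\phi^1_t)$ in $\mathrm{Diff}_0^\infty(\Sigma)$ with $\phi^1_1(e_1(0)) = e_2(0)$. After replacing $e_1$ by $\phi^1_1 \circ e_1$, we may assume $e_1(0) = e_2(0) =: p$. Next, in a chart $(U,\psi)$ around $p$ with $\psi(p) = 0$, join $D_0(\psi \circ e_1)$ and $D_0(\psi \circ e_2)$ by a smooth path in the path-connected group $GL^+(2,\mathbb{R})$, and realise the path as a compactly supported linear isotopy in $\psi(U)$ damped to the identity outside a small ball by a bump function. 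This produces $(\phi^2_t) \in \mathrm{Diff}_0^\infty(\Sigma)$ fixing $p$ with $D_p\phi^2_1 \circ D_0 e_1 = D_0 e_2$.

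For the third step, set $L := D_0 e_1 = D_0 e_2$ after these reductions. Working in the chart $\psi$, the Alexander rescaling $e_i^s(x) := s^{-1}(\psi \circ e_i)(sx)$ for $s \in (0,1]$, extended by $L$ at $s = 0$, is a smooth isotopy of embeddings of a sufficiently small sub-disk $\mathbb{D}_r^2 \subset \mathbb{D}^2$ from the common linearisation to $e_i$. Concatenating the isotopy for $e_1$ with the reverse of the one for $e_2$ yields a smooth family of embeddings of $\mathbb{D}_r^2$ joining $e_1|_{\mathbb{D}_r^2}$ to $e_2|_{\mathbb{D}_r^2}$; the isotopy extension theorem promotes this to an ambient isotopy $(\phi^3_t)$ in $\mathrm{Diff}_0^\infty(\Sigma)$, compactly supported away from $\partial\Sigma$. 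Finally, propagate the match from $\mathbb{D}_r^2$ to all of $\mathbb{D}^2$ by the analogous argument on the annulus $\mathbb{D}^2 \setminus \mathbb{D}_r^2$, using contractibility of the space of self-embeddings of $\mathbb{D}^2$ fixing a neighbourhood of $0$ (a two-dimensional consequence of the Alexander trick), and extend ambiently once more. The composition $h := \phi^3_1 \phi^2_1 \phi^1_1$ is the desired element of $\mathrm{Diff}_0^\infty(\Sigma)$.

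The main technical obstacle is this last splicing: arranging a single ambient isotopy that simultaneously identifies $e_1$ with $e_2$ on all of $\mathbb{D}^2$, rather than only on a shrinking sub-disk. The cleanest way to sidestep it, as in Theorem 3.1 of Hirsch's book, is to organise the Alexander rescaling so that it produces at once a parametric family of embeddings of the entire disk $\mathbb{D}^2$, so that only one appeal to the isotopy extension theorem is needed at the end.
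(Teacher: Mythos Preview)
The paper does not prove this lemma; it simply cites Hirsch, \emph{Differential Topology}, Theorem~3.1 p.~185. Your three-step argument (move centres, align derivatives, Alexander rescaling plus isotopy extension) is exactly the classical proof recorded there, and you correctly identify the one delicate point---making the rescaling yield an isotopy of embeddings of the whole disk rather than a shrinking sub-disk---together with the standard remedy, so the proposal is correct and matches the reference the paper defers to.
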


\begin{proof}[Proof of Proposition \ref{representative}]
Let $\xi$ be an element of $\mathfrak{diff}_{S}(K)$ and take a representative $f$ of $\xi$ in $\mathrm{Diff}_{0}^{\infty}(S)$. In the case where the surface $S$ is the sphere, choose a representative $f$ which fixes a point $p$ in $S-D$.\\

Let $\Sigma$ be an embedded compact surface contained in the interior $\mathring{D}$ of $D$ which is a small neighbourhood of $K$. More precisely, this embedded surface $\Sigma$ is chosen close enough to $K$ so that the sets $\Sigma$ and $f(\Sigma)$ are contained in $\mathring{D}$. Observe that this surface $\Sigma$ is not necessarily connected and denote by $\Sigma_{1}, \ldots, \Sigma_{l}$ its connected components. As these surfaces are embedded in a disk, each of these components is diffeomorphic to a disk with or without holes.\\

Fix $1 \leq i\leq l$. Denote by $U_{i}$ the connected component of $S-\Sigma_{i}$ which contains $\partial D$. Finally, let $D_{i}=S-U_{i}$. Observe that the surface $D_{i}$ is diffeomorphic to a disk : it is the surface $\Sigma_{i}$ with "filled holes".

\begin{claim}
For any $i$, $D_{i} \cup f(D_{i}) \subset \mathring{D}$.
\end{claim}

\begin{proof}
In the case where $S \neq \mathbb{S}^{2}$, observe that the connected component of $S-f(\Sigma_{i})$ which contains $\partial D$ (and hence $S-D$ as $f(\Sigma_{i}) \subset D$) is not homeomorphic to a disk. Therefore, this connected component has to be $f(U_{i})$ and $S-\mathring{D} \subset f(U_{i}) \cap U_{i}$. Taking complements, we obtain the desired property.\\

The case of the sphere is similar: $f(U_{i})$ is the only connected component of $S-f(\Sigma)$ which contains $p=f(p)$. 
\end{proof}

Given two disks in the family $D_{1}, D_{2}, \ldots, D_{l}$, observe that either they are disjoint or one of them is contained in the other one. Indeed, for any $i$, the boundary of $D_{i}$ is a boundary component of $\Sigma_{i}$ and the surfaces $\Sigma_{i}$ are pairwise disjoint. Hence, it is possible to find pairwise disjoint disks $D'_{1}, \ldots D'_{m}$ among the disks $D_{1}, \ldots, D_{l}$ such that $D_{1} \cup D_{2} \cup \ldots \cup D_{l}= D'_{1} \cup D'_{2} \cup \ldots \cup D'_{m}$.\\   

We prove by induction on $i$ that, for any $i \leq m$, there exists a diffeomorphism $g_{i}$ supported in $D$ such that
$$f_{|D'_{1} \cup D'_{2} \cup \ldots \cup D'_{i}}=g_{i |D'_{1} \cup D'_{2} \cup \ldots \cup D'_{i}}.$$
Then the diffeomorphism $g_{m}$ is a representative of $\xi$ supported in $D$.\\

In the case $i=1$, use Lemma \ref{diskisotopy} to find a diffeomorphism $g_{1}$ supported in $D$ such that $f_{|D'_{1}}=g_{|D'_{1}}$.\\

Suppose that we have built the diffeomorphism $g_{i}$ for some $i<m$. Observe that the diffeomorphism $g_{i}^{-1}f$ fixes $D'_{1} \cup D'_{2}\cup \ldots \cup D'_{m}$ pointwise and satisfies $g_{i}^{-1}f(D'_{i+1}) \cup D'_{i+1} \subset \mathring{D}$. Apply Lemma \ref{diskisotopy} to find a diffeomorphism $h$ supported in $D-(D'_{1} \cup D'_{2} \cup \ldots \cup D'_{i})$ such that $g_{i}^{-1}f_{|D'_{i+1}}=h_{|D'_{i+1}}$ and take $g_{i+1}=g_{i}h$.
\end{proof}
\end{section}

 \begin{section}{Standard Cantor set}\label{scs}

Fix a parameter $0<\lambda<1/2$. The central ternary Cantor set  $C_{\lambda}$ in $\R\times{\{0\}} \subset {\R}^2 \subset \s^2$  is obtained from the interval $[0,1]$ as the limit of the following inductive process: At the first step,  take out the central subinterval of length $(1- 2\lambda)$ from the interval $[0,1]$ to obtain a collection $\mathcal{I}_1$ of two intervals of size $\lambda$. At the $n$-th step of the process, we obtain a collection $\mathcal{I}_n$ of $2^n$ intervals of size $\lambda^n$ by removing from each interval $I$ in $\mathcal{I}_{n-1}$, the middle subinterval of size $(1-2\lambda)|I|$. Our Cantor set is given by the formula:  $$ C_{\lambda} := \cap_{n \geq 0}(\cup \mathcal{I}_n) $$ 

In this section we will prove Theorem \ref{m2}, which we restate as follows:

\begin{thm}\label{distorted}

Let $S$ be a closed surface. For any $\lambda > 0$, there are no distorted elements in the group $\mcg(S, C_{\lambda})$, where $C_{\lambda}$ is a smooth embedding in $S$ of the standard ternary Cantor set with affine parameter $\lambda$.

\end{thm}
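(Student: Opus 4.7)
The plan is to reduce Theorem \ref{distorted} to the already established Theorem \ref{m1} by exploiting the short exact sequence~\eqref{exseq} together with Lemma \ref{v2v2} and Burillo's theorem that Thompson's groups $V_n$ contain no distorted elements. Suppose for contradiction that $\xi \in \mcg(S,C_\lambda)$ is distorted, witnessed by a finite symmetric set $\mathcal{G}$ with $l_{\mathcal{G}}(\xi^n)/n \to 0$. Let $\bar\xi \in \mathfrak{diff}_S(C_\lambda)$ denote the image of $\xi$ under the projection in~\eqref{exseq}, and let $\bar{\mathcal{G}}$ be the image of $\mathcal{G}$. Since word length in a quotient can only decrease, we have $l_{\bar{\mathcal{G}}}(\bar\xi^n) \leq l_{\mathcal{G}}(\xi^n)$, hence $l_{\bar{\mathcal{G}}}(\bar\xi^n)/n \to 0$.

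Next I would invoke Lemma \ref{v2v2} to identify $\mathfrak{diff}_S(C_\lambda)$ with a subgroup of Thompson's group $V_2$ (acting by piecewise affine homeomorphisms of $C_\lambda$). If the element $\bar\xi$ had infinite order in $V_2$, then by the previous paragraph it would be distorted in $V_2$, contradicting Burillo's theorem \cite{B}. Therefore $\bar\xi$ has finite order: there exists an integer $N \geq 1$ such that $\bar\xi^N$ is the identity in $\mathfrak{diff}_S(C_\lambda)$, which means $\xi^N \in \pmcg(S,C_\lambda)$.

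Now observe that $\xi^N$ has infinite order, since distorted elements have infinite order by definition. Moreover, $\xi^N$ is itself distorted in $\mcg(S,C_\lambda)$: from
\[
\frac{l_{\mathcal{G}}((\xi^N)^k)}{k} \;=\; N \cdot \frac{l_{\mathcal{G}}(\xi^{Nk})}{Nk} \;\xrightarrow[k \to \infty]{}\; 0,
\]
we see that $(\xi^N, \mathcal{G})$ still satisfies the definition of a distorted element (after enlarging $\mathcal{G}$ if necessary to contain $\xi^N$). But Theorem \ref{m1}, already established in Section 2, asserts precisely that no element of $\pmcg(S,C_\lambda)$ is distorted in $\mcg(S,C_\lambda)$. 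This contradiction proves the theorem.

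The main subtlety, which I would verify carefully at the start, is that Lemma \ref{v2v2} really allows us to transfer Burillo's undistortion result from $V_2$ to $\mathfrak{diff}_S(C_\lambda)$ — this requires either an embedding of $\mathfrak{diff}_S(C_\lambda)$ into $V_2$ (or into some $V_n$) or a direct adaptation of Burillo's argument to the piecewise affine model coming from Funar–Neretin \cite{FN}. Once that identification is in hand, the rest of the argument is a purely formal manipulation of word lengths along the exact sequence~\eqref{exseq}, together with the elementary fact that distortion descends to nontrivial powers.
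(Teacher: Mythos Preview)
Your proposal is correct and follows essentially the same route as the paper: push $\xi$ through the exact sequence~\eqref{exseq}, use the embedding $\overline{V_2}\hookrightarrow V_2$ from Lemma~\ref{v2v2} together with the absence of distorted elements in $V_2$ to force the image to have finite order, then apply Theorem~\ref{m1} to the resulting power $\xi^N\in\pmcg(S,C_\lambda)$. Two small points worth tightening: the identification $\mathfrak{diff}_S(C_\lambda)\cong\overline{V_2}=\mathfrak{diff}_{\mathbb{R}^2}(C_\lambda)$ that you flag as a ``subtlety'' is exactly Corollary~\ref{invariance}, so you should cite it rather than leave it as something to verify; and the undistortion result in $V_n$ you attribute to Burillo is in fact due to Bleak et al.\ \cite{B} (see also \cite{HL}).
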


\begin{defn} \label{elementary} We call \textbf{elementary interval} in our Cantor set $C_{\lambda}$ a set of the form $C_{\lambda}\cap I'$, where $I'$ is an interval in the collection $\mathcal{I}_n$ for some $n$. 
\end{defn}

 It was proven in Funar-Neretin (\cite{FN}, see Theorem 6) that any element $\phi \in \mathfrak{diff}_{\R^2}(C_{\lambda})$ is piecewise affine, i.e. there exists a finite collection of elementary intervals $\{I_k\}$ covering $C_{\lambda}$ such that $\phi$ sends $I_k$ into another elementary interval $\phi(I_k)$ and such that $\phi|_{I_k} = \pm \lambda^{n_k}x + c_k$, for some $n_{k} \in \Z$ and an appropriate constant $c_k$.\\
 
 One consequence of this fact is that the group $\mathfrak{diff}_{\R^2}(C_{\lambda})$ does not depend on $\lambda$. Therefore we define the group $\overline{V_2} := \mathfrak{diff}_{\R^2}(C_{\lambda})$. The group $\overline{V_2}$ contains Thompson's group $V_2$, which we define as the group of homeomorphisms of $C_{\lambda}$ which are piecewise affine, where the  affine maps are  of the form $x \to \lambda^{n}x + c$, see \ref{vndef} for another description of $V_2$. (See also \cite{B}, \cite{CFP} and \cite{FN}).\\

%It was proven in Funar-Neretin (\cite{FN}) that the group $\mathfrak{diff}_{\R^2}(C_{\lambda})$ consists of all the homeomorphisms of $C_{\lambda}$ which are  given piece-wise by diffeomorphisms of the form $(x \to  \pm \lambda^{n}x + c)$. One consequence of this fact is that the group $\mathfrak{diff}_{\R^2}(C_{\lambda})$ does not depend on $\lambda$, we will call $\overline{V_2}$ to this group. The group $\overline{V_2}$ contains Thompson group $V_2$ (homeomorphisms of $C_{\lambda}$ piecewise given by affine maps of the form $x \to \lambda^{n}x + c$ preserving $C_{\lambda}$ ). 

 \begin{figure}[ht]
  
  \centering
  
  \vspace{+10pt}
    \includegraphics[width=1.0\textwidth]{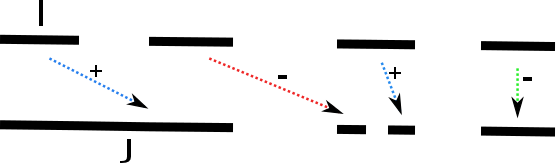}
    \vspace{+3pt}
  \caption{An element $f \in \overline{V_2}$}
    \vspace{+3pt}
    \label{Cantorc}
\end{figure}

\begin{figure}[ht]

  \centering
 \vspace{+10pt}
    \includegraphics[width=1.0\textwidth]{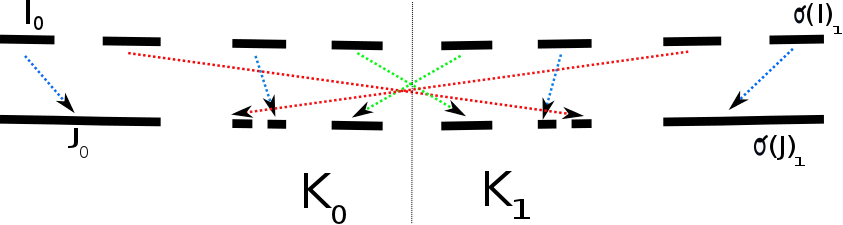}
  \vspace{+3pt}
  \caption{The corresponding  element $\phi(f) \in V_2$.}
  \label{Cantorcc}

     \vspace{+3pt}
\end{figure}

Even though the groups $V_2$ and $\overline{V_2}$ are different and $V_2 \subset \overline{V_2}$, there is an embedding of the group $\overline{V_2}$ into $V_2$ as we will show in the following proposition:\\

 \begin{prop}\label{v2v2} There is an injective homomorphism $\phi: \overline{V_2} \to V_2$.
 \end{prop}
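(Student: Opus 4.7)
The plan is to embed $\overline{V_2}$ into $V_2$ by a "doubling" construction: each point of $C_\lambda$ is tagged with an orientation bit $\delta \in \{+,-\}$, a local orientation reversal of $f \in \overline{V_2}$ is converted into a swap of the two orientation labels, and this swap is realized geometrically by reparametrizing one of the two halves of $C_\lambda$ by a reflection. In the reparametrized Cantor set the resulting map becomes piecewise of the form $\lambda^{n}x + c$.

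Concretely, I use the self-similar decomposition $C_\lambda = \lambda C_\lambda \cup (1-\lambda+\lambda C_\lambda)$; because $C_\lambda$ is symmetric about $\tfrac{1}{2}$, the right half equals $1-\lambda C_\lambda$. I parametrize the right half by the reflection $y \mapsto 1-\lambda y$ (instead of the usual translation $y \mapsto 1-\lambda+\lambda y$), giving a tautological bijection $\Phi : C_\lambda \times \{+,-\} \to C_\lambda$, with $\Phi(y,+)=\lambda y$ and $\Phi(y,-)=1-\lambda y$. Given $f\in \overline{V_2}$ with local form $f(y)=\epsilon_w \lambda^{m_w}y + c_w$ on an elementary interval $I_w$ (Funar--Neretin, cited above Proposition \ref{v2v2}), I define $\phi(f)$ on $C_\lambda \times \{+,-\}$ by
\[
\phi(f)(y,\delta) \;=\; (f(y),\ \delta\cdot \epsilon_w) \qquad (y\in I_w),
\]
and then transport this map to $C_\lambda$ via $\Phi$.

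The key verification is that the transported map lies in $V_2$, and this reduces to a direct check in four cases indexed by $(\delta,\epsilon_w)\in\{+,-\}^2$. Writing $\tilde{x}=\Phi(y,\delta)$ and expressing $\Phi(f(y),\delta\epsilon_w)$ as a function of $\tilde{x}$, one obtains in every case a map of the form $\tilde{x}\mapsto \lambda^{m_w}\tilde{x}+c'$ for a new constant $c'$ depending on $\delta,\epsilon_w,c_w,\lambda$. This is precisely the form allowed by $V_2$, and the affinity breakpoints live on elementary intervals, so $\phi(f)\in V_2$.

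Finally, $\phi$ is a homomorphism: if $\epsilon(h,x)$ denotes the local orientation of $h\in\overline{V_2}$ at $x$, the chain-rule identity $\epsilon(fg,x)=\epsilon(f,g(x))\cdot\epsilon(g,x)$ gives $\phi(fg)(y,\delta)=\phi(f)\phi(g)(y,\delta)$. Injectivity is immediate by reading off the first coordinate: if $\phi(f)=\mathrm{Id}$, then $f(y)=y$ for every $y\in C_\lambda$. The main technical point is the four-case computation that produces the correct exponent $\lambda^{m_w}$ (with the sign absorbed into the choice of half and the parametrization by reflection); everything else is formal.
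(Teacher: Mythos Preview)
Your proof is correct and is essentially the paper's argument: both double $C_\lambda$ with an orientation tag $\{+,-\}$ and parametrize one copy by a reflection so that the orientation-reversing affine pieces of $f\in\overline{V_2}$ become orientation-preserving, then verify the homomorphism property via the chain rule $\epsilon(fg,x)=\epsilon(f,g(x))\epsilon(g,x)$ for the local signs. The only cosmetic difference is that the paper realizes the doubled Cantor set as an external union $K_0\cup K_1$ with $K_1$ a reflected copy of $K_0$, whereas you exploit the self-similarity $C_\lambda=\lambda C_\lambda\cup(1-\lambda C_\lambda)$ to identify $C_\lambda\times\{+,-\}$ with the two halves of $C_\lambda$ itself via $\Phi(y,+)=\lambda y$, $\Phi(y,-)=1-\lambda y$.
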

 
 \begin{proof}
 
Let us define  $K := C_{\lambda}$ for some $0<\lambda<1$. Consider the Cantor set $K' := K_0 \cup K_1$, where $K_0 = K$ and $K_1$ is the Cantor set obtained by reflecting $K_0$ in the vertical line $x = 3/2$ as in Figure \ref{Cantorcc}.\\

There is an involution $\sigma: K \rightarrow K$ given by $x \to 1-x$. We will use the following notation: for any interval $I \subset K$, $I_i$ is the corresponding interval of $I$ in the copy $K_i$.\\

For a homeomorphism $f \in \overline{V_2}$ of the ternary Cantor set $C_{\lambda}$, we define $\phi(f) \in V_2$ as the piecewise affine homeomorphism of $K'$ which satisfies the following property. Let $I \subset K$ be any elementary interval such that our homeomorphism $f$ sends $I$ affinely into the elementary interval $J := f(I)$. In this situation, we will define $\phi(f)$ on $I_0 \cup \sigma(I)_1$ by the following rules:\\

\begin{enumerate} 

\item If $f$ preserves the orientation of $I$, we define: $$\phi(f)(I_0) = J_0 \text{ and } \phi(f)(\sigma(I)_1) = \sigma(J)_1.$$
\item If $f$ reverses  the orientation of $I$, we define: $$\phi(f)(I_0) = \sigma(J)_1 \text{ and } \phi(f)(\sigma(I)_1) = J_0.$$
\item In (1) and (2), the maps $\phi(f)|_{I_0}$ and $\phi(f)|_{\sigma(I)_1}$ are affine orientation preserving maps.

\end{enumerate}

One can easily check that $\phi(f)$ is well defined on $K'$ (If  $I \subset J$ and $f|_{I}$ and $f|_J$ are affine maps, then the definition of $\phi(f)$ on $I_0 \cup \sigma(I)_1$ and on $J_0 \cup \sigma(J)_1$ should coincide). See Figure \ref{Cantorcc} for an illustration of this construction.\\

We still need to prove that $\phi$ is a group homomorphism, but that can be shown easily as follows: Suppose $f$ and $g$ are two elements of $\overline{V_2}$. If we partition our Cantor set $K$ into a collection  $\{I_n\}$ of sufficiently small intervals, we can suppose that each interval $I_n$ is mapped affinely by $f$ and also that each interval $f(I_n)$ is mapped affinely by $g$. One can then do a case by case check (whether $f,g$ are preserving orientation or not in $I_n$ and $f(I_n)$ respectively) to show that, restricted to the intervals $(I_n)_0$ and $(\sigma(I_n))_1$ in $K'$, we have $\phi(gf) = \phi(g)\phi(f)$.
 
 \end{proof}

 \textbf{Example:} Consider the element $f$ described in Figure \ref{Cantorc}. The arrows in the picture indicate where $f$ is mapping each of the 4 elementary intervals affinely. The symbols $-,+ $ denote whether the interval is mapped by $f$ preserving orientation or not. The corresponding element $\phi(f)$ is depicted in Figure \ref{Cantorcc}.\\

The previous construction is useful for our purposes because it is known there are no distorted elements in Thompson's group $V_n$ (see Bleak-Collin-et al. \cite{B}, Sec.8, see also Hmili-Liousse \cite{HL}, Corollary 1.10). By Proposition \ref{v2v2}, this implies there are no distorted elements in $\overline{V_2}$. Having that fact in mind and in view of Theorem \ref{main2} we can easily finish the proof of Theorem \ref{distorted}:

\begin{proof}[Proof of Theorem \ref{distorted}] Suppose $f \in \mcg_0(S, C_{\lambda})$ is distorted. Observe that, by Corollary \ref{invariance}, the group $\mathfrak{diff}_{S}(C_{\lambda})$ is isomorphic to $\mathfrak{diff}_{\mathbb{R}^{2}}(C_{\lambda})$. From the exact sequence:\[\pmcg_0(S, C_{\lambda}) \to  \mcg_0(S, C_\lambda)  \underset{\pi}\to  \mathfrak{diff}_{S}(C_{\lambda}) = \overline{V_2}, \]
we obtain that $\pi(f)$ is distorted. Now, by Lemma \ref{v2v2}, $\overline{V_2}$ embeds in $V_2$ and  as there are no infinite order distorted elements in $V_2$ (See \cite{B}, Sec.8), the element $\pi(f)$ has finite order. Thus, we obtained that $f^k \in \pmcg_0(S,  C_{\lambda})$ for some $k\geq 1$, and the element $f^k$ is as well  distorted. By Theorem \ref{main2}, $f^k = \text{Id}$ and so $f$ has finite order.
\end{proof}

\end{section}
 
\begin{section}{Tits alternative}

The ``Tits alternative" states that a finitely generated group $\Gamma$ which is linear (isomorphic to a subgroup of $\text{GL}_n(\R)$ for some $n$) either contains a copy of the free subgroup on two generators $\mathbb{F}_2$ or it is virtually solvable. In \cite{M}, Margulis proved a similar statement for $\text{Homeo}(\s^1)$: Any subgroup $\Gamma \subset \text{Homeo}(\s^1)$ either contains a free subgroup or preserves a measure in $\s^1$. As the derived subgroup $[F_{n},F_{n}]$ of Thompson's group $F_n$ is a simple subgroup of $\text{Homeo}(\s^1)$  (see \cite{CFP}, Theorem 4.5) and does not contain free subgroups on two generators by  Brin-Squier's Theorem (see \cite{G}, Theorem 4.6 p.344), the actual statement of the Tits alternative cannot hold in $\text{Homeo}(\s^1)$ (see also \cite{N}). In this section, we prove Theorem \ref{m3}. By Corollary \ref{invariance}, Theorem \ref{m3} reduces to the following theorem.

\begin{thm}\label{main}
Let $\Gamma$ be a finitely generated subgroup of $\mcg(\mathbb{R}^{2}, C_{\alpha})$, then one of the following holds:

\begin{enumerate}

\item $\Gamma$ contains a free subgroup on two generators $\mathbb{F}_2$
\item $\Gamma$ has a finite orbit, i.e. there exists $p \in C_{\alpha}$ such that the set $\Gamma(p) := \{ g(p) \: | g \in \Gamma\}$ is finite.

\end{enumerate}

\end{thm}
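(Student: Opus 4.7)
The plan is to deduce Theorem \ref{main} from Theorem \ref{m4} applied to the Thompson group $V_2$, by combining the exact sequence~\eqref{exseq} with the injective homomorphism $\phi: \overline{V_2} \hookrightarrow V_2$ from Proposition \ref{v2v2}. Concretely, given a finitely generated $\Gamma \subset \mcg(\mathbb{R}^2, C_\alpha)$, I would form the composition
$$\Phi : \Gamma \xrightarrow{\pi} \mathfrak{diff}_{\mathbb{R}^2}(C_\alpha) = \overline{V_2} \xrightarrow{\phi} V_2,$$
whose image $H := \Phi(\Gamma)$ is a finitely generated subgroup of $V_2$. Theorem \ref{m4} then splits into two alternatives: either $H$ contains a non-abelian free subgroup, or $H$ has a finite orbit on the Cantor set $K' = K_0 \cup K_1$ on which $V_2$ naturally acts.

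The free subgroup case is immediate: if $\Phi(g_1), \Phi(g_2) \in H$ generate a free group of rank two, then for any nontrivial reduced word $w$ one has $\Phi(w(g_1,g_2)) = w(\Phi(g_1), \Phi(g_2)) \neq 1$, so $w(g_1,g_2) \neq 1$, and $g_1, g_2$ themselves generate a free subgroup of $\Gamma$. In the finite-orbit case, I would exhibit a $\Phi(\Gamma)$-equivariant projection $p : K' \to C_\alpha$ defined by $p|_{K_0} = \mathrm{id}_{C_\alpha}$ and $p|_{K_1} = \sigma$, using the natural parametric identifications $K_0 \cong K_1 \cong C_\alpha$. Equivariance is checked by a short case analysis based on the explicit description of $\phi(f)$ in Proposition \ref{v2v2}: on each elementary interval $I$ where $f \in \overline{V_2}$ acts affinely, one treats separately the orientation-preserving and orientation-reversing subcases, and verifies in each that $p \circ \phi(f) = f \circ p$ on $I_0$ and on $\sigma(I)_1$. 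Granted equivariance, if $F \subset K'$ is a finite $\Phi(\Gamma)$-orbit, then $p(F) \subset C_\alpha$ is a finite $\Gamma$-invariant subset, hence contains a finite $\Gamma$-orbit, yielding the second conclusion of the theorem.

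The main obstacle is not this bookkeeping reduction but Theorem \ref{m4} itself. The plan for that theorem (to be carried out separately) is to exploit the contracting-repelling dynamics of nontorsion elements of $V_2$ on $K_2$ and run a Margulis-style ping-pong argument, using Lemma \ref{m5}—in the spirit of Horbez's proof of the Tits alternative for mapping class groups—as the device that moves a finite subset of $K'$ disjoint from itself, thereby making the ping-pong position attainable. Within the scope of Theorem \ref{main}, the only genuinely nontrivial verification is the equivariance of the projection $p$, and everything else amounts to chasing the exact sequence.
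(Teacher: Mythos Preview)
Your proposal is correct and follows the same route as the paper: reduce Theorem~\ref{main} to Theorem~\ref{m4} via the exact sequence~\eqref{exseq} and the embedding $\phi:\overline{V_2}\hookrightarrow V_2$ of Proposition~\ref{v2v2}, then invoke the ping-pong/Lemma~\ref{m5} machinery for the Thompson-group statement. The one place where you are more careful than the paper is the finite-orbit case: the paper simply declares the reduction ``immediate'', whereas you correctly observe that a finite $\Phi(\Gamma)$-orbit lives on the doubled Cantor set $K'=K_0\cup K_1$ and must be pushed down to $C_\alpha$; your equivariant projection $p$ (identity on $K_0$, reflection on $K_1$) is exactly the right device, and the case check you describe goes through.
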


Using the description of $\mathfrak{diff}_{\mathbb{R}^{2}}(C_{\lambda})$ explained at the beginning of Section \ref{scs} and Proposition \ref{v2v2}, we deduce the previous theorem as an immediate corollary of the following statement about  Thompson's group $V_n$, which could be of independent interest:\\

\begin{thm} For any finitely generated subgroup  $\Gamma \subset V_n$, either the action of $\Gamma$ on the  Cantor set $K_n$ has a finite orbit or $\Gamma$ contains a free subgroup.
\end{thm}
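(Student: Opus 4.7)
The plan is to follow Margulis's strategy for $\homeo(\s^1)$, exploiting the combinatorial description of elements of $V_n$ as prefix-substitutions on the tree of cylinders of $K_n$, with Lemma \ref{m5} playing the role that the absence of an invariant measure plays in the classical argument. Assume $\Gamma \subset V_n$ is finitely generated and has no finite orbit on $K_n$; the goal is to construct two elements playing ping-pong.

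First, I would produce an element $g \in \Gamma$ exhibiting contracting-repelling (``north-south'') dynamics on $K_n$: two disjoint elementary intervals $U^+, U^- \subset K_n$ and an integer $N \geq 1$ with $g^N(\overline{U^+}) \subsetneq U^+$ and $g^{-N}(\overline{U^-}) \subsetneq U^-$. This rests on the standard fact that any infinite-order $v \in V_n$, presented as a prefix-substitution $[u_i] \mapsto [w_i]$ between two cylinder partitions of $K_n$, must --- possibly after replacing it by a power --- contain some index $i$ for which $w_i$ strictly extends $u_i$; the corresponding cylinder then contracts to an attracting periodic point, and applying the same observation to $v^{-1}$ supplies a disjoint repelling one. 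The preliminary step is therefore to produce a non-torsion element in $\Gamma$, which I would do by applying Lemma \ref{m5} to a finite set $F$ of representative points, one chosen in each cylinder of the common refinement of the partitions associated with a fixed symmetric generating set of $\Gamma$: the absence of a finite orbit then forces some short word in the generators to move $F$ entirely off itself, preventing it from preserving the chosen level of the cylinder tree and hence from being torsion.

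Second, I would separate the attractor from the repeller by a conjugator. Choose a finite set $F' \subset K_n$ whose intersection with each cylinder of some refinement of the partition $\{U^+, U^-, K_n \setminus (U^+ \cup U^-)\}$ is nonempty. Applying Lemma \ref{m5} once more to $\Gamma$ and $F'$, the absence of a finite orbit yields $h \in \Gamma$ with $h(F') \cap F' = \emptyset$. The density of $F'$ then forces the four sets $U^+, U^-, h(U^+), h(U^-)$ to be pairwise disjoint. Setting $a = g^{MN}$ and $b = h a h^{-1}$ for $M$ large enough that $a$ sends $K_n \setminus U^-$ into $U^+$ and $a^{-1}$ sends $K_n \setminus U^+$ into $U^-$ (and symmetrically for $b^{\pm 1}$ on $h(U^\pm)$), the classical ping-pong lemma gives $\langle a, b \rangle \cong \mathbb{F}_2$.

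The main obstacle will be the first stage: extracting from $\Gamma$ a single element with genuine two-sided hyperbolic dynamics on $K_n$, with both an attractor and a disjoint repeller, rather than, for instance, an element that merely contracts on one piece of $K_n$ while acting trivially on a nonempty complementary piece. This requires a careful combinatorial analysis of the action of $V_n$ on the rooted tree of cylinders, together with a delicate use of Lemma \ref{m5} to guarantee that at least one cylinder is properly nested inside its image.
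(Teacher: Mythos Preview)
Your high-level strategy (north--south dynamics, Lemma~\ref{m5} to separate, then ping-pong) matches the paper's, and you have correctly located the real obstruction: an infinite-order element $g\in V_n$ need not have global north--south dynamics, because of the clopen ``elliptic'' piece $U_g\subset K_n$ on which some power of $g$ is the identity. However, your proposal does not actually overcome this obstruction, and two of the intermediate steps fail as written. First, in Step~1, moving a finite set $F$ off itself does not preclude finite order; a permutation of cylinders of high order can do this. More importantly, even once you have an infinite-order element, its $U_g$ may be a large clopen set, so $g^N$ does \emph{not} send $K_n\setminus U^-$ into $U^+$: it fixes $U_g$ pointwise, and the ping-pong in your Step~2 collapses. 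Second, your use of Lemma~\ref{m5} in Step~2 is incorrect: $h(F')\cap F'=\emptyset$ for a finite $F'$ meeting every small cylinder does not force $h(U^\pm)$ to be disjoint from $U^\pm$, since $h$ may move each point of $F'$ to a nearby point in the same cylinder not lying in $F'$.

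The paper never tries to manufacture a single element with empty $U_g$. Instead it keeps track of $U_g$ throughout. The ping-pong lemma (Lemma~\ref{pingpong}) requires an $h$ moving the \emph{entire} set $U_f\cup\per(f)$ off itself; since $U_f$ is clopen, Lemma~\ref{m5} cannot be applied to it directly. The core technical work (Lemma~\ref{disjoint} and Proposition~\ref{kf}) shows that if one starts with finitely many elements $f_1,\dots,f_k$ with $\bigcap_i U_{f_i}=\emptyset$, then suitable iterated products $w_\e$ of powers of the $f_i$ have their whole periodic set $U_{w_\e}\cup\per(w_\e)$ contained in $N_\e(S)$ for a \emph{fixed finite} set $S$; only then is Lemma~\ref{m5} applied, to $S$. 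Finally, the remaining case $K_\Gamma=\bigcap_{g\in\Gamma}U_g\neq\emptyset$ is handled by a separate argument that genuinely uses finite generation of $\Gamma$ to force a finite orbit inside $K_\Gamma$. Both of these ingredients are missing from your outline, and the first cannot be replaced by a direct search for a north--south element.
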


The finite generation condition is indeed necessary, as the following example shows: \\

The finite group of permutations $S_{2^n}$ is a subgroup of $V_2$ as it acts on $C_{\lambda}$ by permutations of the elementary intervals of the collection $\mathcal{I}_n$ described at the beginning of Section \ref{scs}. Defining the group $S_{\infty} := \bigcup_n S_{2^n}$, we easily see that $S_{\infty}$ has no finite orbit (the action is in fact minimal) and  there is no free subgroups in $S_{\infty}$ as any element has finite order.\\

\begin{subsection}{Elements of $V_n$ and tree pair diagrams}\label{vndef}

We need to describe the action of the elements of $V_n$ on the Cantor set $K_n$ in detail. In order to do this, we will use as a tool the description of the elements of $V_n$ as tree pair diagrams as described for example in \cite{B}, \cite{Br}, \cite{CFP}, \cite{SD}. We will follow very closely the description given in \cite{B} and we refer to it for a more detailed explanation of the material introduced in this subsection. The main tool for us is the existence of ``revealing tree pair diagrams" which were first introduced by Brin in \cite{Br}. These ``revealing diagrams" allow us to read the dynamics of elements of $V_n$ easily.\\

Indeed, we will show that for each element $g \in V_n$, there are two $g-$invariant clopen sets $V_g$ and $U_g$ such that $K_n = U_g \cup V_g$, where $g|_{U_g}$ has finite order and $g|_{V_g}$ has ``repelling-contracting" dynamics. The reader that decide to skip this introductory subsection, should look at Lemma \ref{contra}, where all the properties of the dynamics of elements of $V_n$ in $K_n$ that we will use are described.\\

\begin{subsubsection}{Notation}

 \begin{figure}[ht]

  \centering
    \vspace{+10pt}
    \includegraphics[width=1.1\textwidth]{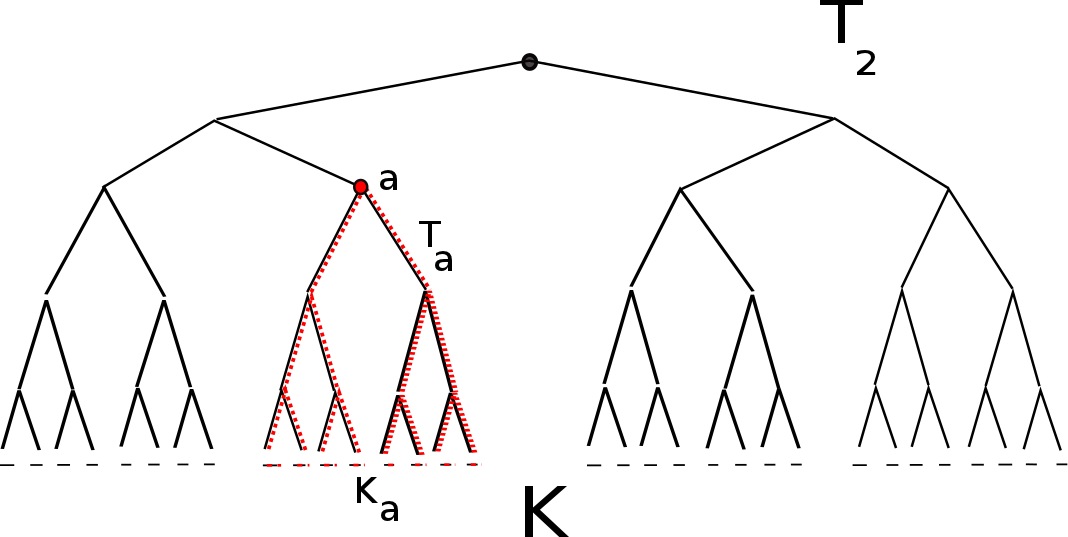}
    \vspace{+3pt}
  \caption{The rooted tree $\mathcal{T}_2$, for a vertex $a \in \mathcal{T}_2$, the tree $\mathcal{T}_a$ and the elementary interval $K_{a}$ are depicted}
    \vspace{+3pt}
    
   \label{t2}
\end{figure}

From now on, $K_n$ denotes the Cantor set that we identify with the ends of the infinite rooted $n-$tree $\mathcal{T}_n$ (see Figure \ref{t2}, where $\mathcal{T}_2$ and $K$ are depicted). For a vertex $a \in \mathcal{T}_n$, we define $T_a$ as the infinite $n$-ary rooted tree descending from $a$. We define the clopen set $K_a \subset K$ as the ends of $T_{a}$ (see Figure \ref{t2}). Any subset of $K_n$ of the form $K_a$ is an \textbf{elementary interval} as in Definition \ref{elementary}.\\

\end{subsubsection}

The group $V_n$ is a subgroup of the group $\text{Homeo}(K_n)$. Our next task is to describe which kind of homeomorphisms of $K_n$ belong to $V_n$, looking at the example depicted in Figure \ref{vnrota2} might be instructive to understand what an element of $V_n$ can be.\\

 \begin{figure}[ht]
  
  \centering
  
  \vspace{+10pt}
    \includegraphics[width=1.1\textwidth]{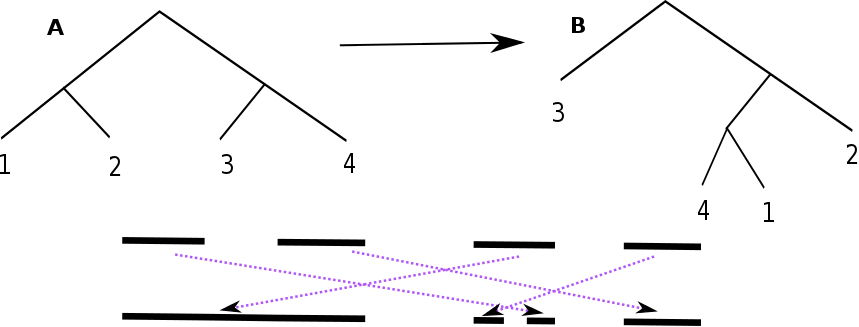}
    \vspace{+3pt}
  \caption{An element $f$ of $V_2$}
    \vspace{+3pt}
    \label{vnrota2}
    
\end{figure}

An element $g$ of $V_n$ is described by a triple $(A,B, \sigma)$ where $A$ and $B$ are $n$-ary rooted trees (connected subtrees of $\mathcal{T}_n$) with the same number of endpoints ($A,B$ tell us a way of partitioning our Cantor set $K_n$ into elementary intervals)  together with a bijection $\sigma$ between the endpoints of $A$ and the endpoints of $B$ that  tell us how an interval is going to be mapped by $g$ to another interval. More formally, for an endpoint $a \in A$, $g$ maps $K_a$ into $K_{\sigma(a)}$ by mapping $T_{a}$ into $T_{\sigma(a)}$ in the obvious way (see Figure \ref{vnrota2}).\\

\end{subsection}

\begin{subsection}{Revealing pairs}

 \begin{figure}[ht]
  
  \centering
  
  \vspace{+10pt}
    \includegraphics[width=1.1\textwidth]{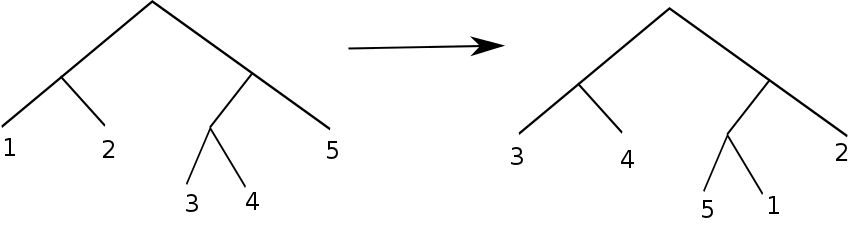}
    \vspace{+3pt}
  \caption{A revealing pair diagram for the element in Figure \ref{vnrota2}.}
    \vspace{+3pt}
    \label{vnrota4}
    
\end{figure}

It should be noted that an element $g \in V_n$ is not described by a unique tree pair diagram $(A,B, \sigma)$. Some tree pair diagrams describe the dynamics of an element $V_n$ better than others. As an example, consider the element $f$ with the diagrams depicted in Figures \ref{vnrota2} and \ref{vnrota4}. In the tree pair diagram in Figure \ref{vnrota4}, the trees $A$ and $B$ coincide and so $f$ must have finite order.\\ 

Before defining what a revealing tree pair diagram is, we will need to set up some notation. Let $g \in V_n$ be an element described by a tree pair $(A,B, \sigma)$.\\

Consider the sets $X = \overline{A-B}$ and $Y = \overline{B-A}$. Notice that each connected component of  $X$ (respectively $Y$) is a rooted tree whose root is an endpoint of $B$ but not of $A$ (respectively $A$ but not $B$). In the example depicted in Figure \ref{vngood}, $X$ is blue and $Y$ is red.\\

Let $L_{(A,B, \sigma)}$ denote the set of vertices of $\mathcal{T}_n$ which are  endpoints of either $A$ or $B$. A vertex in $L_{(A,B, \sigma)}$ is called neutral if $x$ is an endpoint of both $A$ and $B$. Observe that if $\lambda \in L_{(A,B, \sigma)}$ and if  $g^i(\lambda)$ is a neutral vertex of $L_{(A,B, \sigma)}$ for every $i \geq 0$, then the vertex $\lambda$ must be periodic for $g$. Let $t$ be the period of $\lambda$, \emph{i.e.} the minimal $t>0$ such that $g^{t}(\lambda) = \lambda$. Observe that in this case $g^t|_{K_\lambda} = \text{Id}$. If $\lambda$ is not periodic  we can find the largest integers  $s\geq 0$ and $r \geq 0$, such that for any $-r<i< s $, the vertex  $g^i(\lambda)$ is a neutral vertex of $L_{(A,B, \sigma)}$. In this case, we define the iterated augmentation chain as $$IAC(\lambda) := \big(g^i(\lambda)\big)_{i=-r}^s .$$ Observe that the vertex $g^{-r}(\lambda)$ is an endpoint of $A$ but not of $B$ and the vertex  $g^{s}(\lambda)$ is an endpoint of $B$ but not of A.\\

An \emph{attractor} in $L_{(A,B, \sigma)}$ is defined as an endpoint of $A$ such that $g^{s}(\lambda)$ belongs to $B \setminus A$ and such that $g^{s}(\lambda)$ is strictly contained in $T_\lambda$ ($g^s(\lambda)$ is under $\lambda$). In this case we see that $g^{s}|_{K_\lambda}$ has attracting dynamics, and there is a unique attracting point $p$ for $g^s$ inside $K_\lambda$. In a similar way, one defines a ``\emph{repeller}" as a vertex $\lambda$ in $B$, such that $g^{-r}(\lambda)$ is strictly contained in $T_{\lambda}$. In Figure \ref{vngood}, the red vertices are attractors and the blue one is a repeller. Observe that attractors are always roots of components of $Y$ and repellers are always roots of components of $X$. \\

%\begin{defn} A revealing tree pair diagram $(A, B,  \sigma)$ for an element $g \in V_n$ is defined as a tree pair pair diagram $(A,B,\sigma)$ such that the root of each component $X = \overline{A\setminus B}$ is a repeller and the root of each component of $Y = \overline{B \setminus A}$ is an attractor. 
%\end{defn}  

\begin{defn} Let $(A, B,  \sigma)$ be a tree pair diagram for an element $g \in V_n$. The set $X = \overline{A\setminus B}$  (respectively $Y = \overline{B \setminus A}$) consists of a union of rooted trees, whose roots are endpoints of $B$ (respectively $A$). If all these vertices are repellers (respectively attractors), then $(A, B,  \sigma)$ is said to be a \emph{revealing tree pair diagram}.
\end{defn}

\begin{thm}[Brin \cite{Br}]\label{rp} For every $g \in V_n$, there exists a revealing tree pair diagram $(A,B, \sigma)$, even more,  there is an algorithm to extend any tree pair diagram into a revealing tree pair diagram.

\end{thm}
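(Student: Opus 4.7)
My strategy would be algorithmic: starting from any tree pair diagram $(A,B,\sigma)$ representing $g$, apply a finite sequence of \emph{caret expansions} to produce a revealing tree pair. The basic move takes an endpoint $\lambda$ of $A$, attaches an $n$-caret at $\lambda$ in $A$ and simultaneously an $n$-caret at $\sigma(\lambda)$ in $B$, and extends $\sigma$ to pair the new children left-to-right. Such a move does not change the element of $V_n$ represented by the diagram, but refines the partition of $K_n$ into elementary intervals and therefore alters which leaves lie in $X=\overline{A\setminus B}$, in $Y=\overline{B\setminus A}$, or are neutral. Since every revealing pair is in particular a tree pair for $g$, if the procedure terminates on a revealing output, both the existence claim and the effective algorithm of the theorem follow at once.

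First I would analyse the obstruction to being revealing. By definition, $(A,B,\sigma)$ is revealing if and only if every root $\lambda\in B\setminus A$ of a component of $X$ is a repeller and every root $\lambda\in A\setminus B$ of a component of $Y$ is an attractor. For a bad root $\lambda\in B\setminus A$ (not a repeller), the backward endpoint $\mu:=g^{-r}(\lambda)\in A\setminus B$ of its iterated augmentation chain must satisfy either $T_\mu\cap T_\lambda=\nil$ or $T_\mu\supsetneq T_\lambda$, since $T_\mu\subsetneq T_\lambda$ would make $\lambda$ a repeller, and $T_\mu=T_\lambda$ is impossible because $\mu\in A\setminus B$ while $\lambda\in B\setminus A$. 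The case of a non-attracting root of $Y$ is entirely symmetric under $(A,B,\sigma)\mapsto (B,A,\sigma^{-1})$.

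The heart of the argument is a \emph{rolling procedure} that fixes one bad root at a time. Given a bad $\lambda$ with IAC $\lambda,g^{-1}(\lambda),\dots,g^{-r}(\lambda)=\mu$, I would perform a caret expansion at $\mu$ (hence at $\sigma(\mu)$ as well). Tracking how the $n$ new endpoints propagate under $g$, one shows that in the new diagram either some new endpoint serves as a further backward image $g^{-(r+1)}(\lambda)$ lying strictly inside $T_\lambda$, so that $\lambda$ is now a repeller, or the IAC lengthens by one and the same rolling step can be applied again at the new backward endpoint. Iterating, the procedure must terminate with one of two outcomes for $\lambda$: either $\lambda$ becomes a repeller, or the IAC eventually closes up into a periodic cycle of neutral vertices, in which case $\lambda$ drops out of the set of roots of $X$ altogether (and the corresponding clopen piece of $K_n$ carries the finite-order part of $g$). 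A symmetric rolling addresses non-attracting roots of $Y$.

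The main obstacle, and the step I would treat most carefully, is termination. For the inner rolling loop I would use the ``escape distance'' $d(T_\mu,T_\lambda)$ in $\mathcal{T}_n$, namely the length of the shortest path in the tree from $\mu$ to the nearest common ancestor of $\mu$ and $\lambda$, and show that this quantity strictly decreases at each rolling expansion. For the outer loop I would order tree pairs lexicographically by the number of non-revealing roots together with this escape distance, and verify that a rolling sequence aimed at a single bad $\lambda$ does not create new bad roots elsewhere — this is a local check, since the expansion is performed at an endpoint $\mu$ already present in $L_{(A,B,\sigma)}$ and only affects components of $X$ and $Y$ meeting the $g$-orbit of $\mu$. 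Because the resulting complexity takes values in $\Z\times\Z$ with the well-order, the procedure halts in finitely many steps, which is exactly what the theorem asserts.
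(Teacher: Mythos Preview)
The paper does not prove this theorem at all: it is quoted from Brin \cite{Br} and used as a black box, so there is no in-paper argument to compare against. Your sketch is in the right spirit --- Brin's proof is indeed an algorithm that repeatedly expands a given tree pair until the non-revealing components disappear --- but several details are off.

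First, a small point: the case $T_\mu\supsetneq T_\lambda$ cannot occur. Your $\lambda$ is a root of a component of $X=\overline{A\setminus B}$, hence an interior vertex of $A$, while $\mu=g^{-r}(\lambda)$ is a leaf of $A$; a leaf of $A$ can never be a strict ancestor of an interior vertex of $A$. So for a non-repelling root one always has $T_\mu\cap T_\lambda=\nil$.

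More seriously, your basic move and your termination claim do not match. A single caret expansion at $\mu$ forces a caret at $\sigma(\mu)=g^{-r+1}(\lambda)$ in $B$; when $r\geq 2$ this vertex was neutral, so after the expansion it becomes a leaf of $A$ that is interior to $B$, i.e.\ the root of a \emph{new} component of $Y$. Thus the assertion that ``a rolling sequence aimed at a single bad $\lambda$ does not create new bad roots elsewhere'' is false for the move you describe, and the lexicographic complexity you propose is not obviously monotone. Brin's actual rolling is coarser: one attaches a copy of the \emph{entire} offending component (not a single caret) at the far end of the chain, and the complexity that decreases is, roughly, the number of carets in $X\cup Y$ lying over non-attracting/non-repelling roots. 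If you want to salvage a single-caret version you will need a different, more global, well-founded measure; the ``escape distance'' alone does not control the new components that appear along the chain.
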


One easy consequence of Theorem \ref{rp} is that every periodic element of $V_n$ has a tree pair diagram $(A, B, \sigma)$ where $A = B$, as it is illustrated in Figure \ref{vnrota4}.\\

If $(A, B, \sigma)$ is a revealing pair, the dynamics of each interval under a vertex of $L_{(A, B, \sigma)}$ can be easily described as we will show next.\\

Let $\lambda$ be a vertex of $L_{(A,B,\sigma)}$ and suppose that $\lambda$ is not a periodic vertex. Let  $ \text{IAC}(\lambda) = \big(g^i(\lambda)\big)_{i=-r}^s$ be its iterated augmented chain.  In this case, $g^{-r}(\lambda)$ is an endpoint of $A$ but not of $B$, and $g^{s}(\lambda)$ is an endpoint of $B$ but not of $A$. Hence, there are two possibilities: either $g^{s}(\lambda)$ is a root of a component of $X$, or $g^{s}(\lambda)$ is a vertex of a tree in $Y$.\\  

If $g^{s}(\lambda)$ is a root of a component of $X$, then, as $(A, B, \sigma)$ is a revealing tree pair diagram, $g^{s}(\lambda)$ is a repeller. The vertex $g^{-r}(\lambda)$ is then strictly under $g^s(\lambda)$ and there is a unique fixed point $p$ for $g^{-r-s}$ in $K_{g^{s}(\alpha)}$. This point $p$ is a repelling periodic point of order $s+r$. In that case, the elementary intervals $\{g^i(K_{\lambda})\}_{i=-r}^{s-1}$ are disjoint and each of them contains a unique repelling periodic point in the orbit of $p$.\\  

If $g^{s}({\lambda})$ is a vertex of a tree in $Y$ and  the vertex $g^{-r}(\lambda)$ is a root of a component of $Y$, then $g^{-r}(\lambda)$ is an attractor. In this case, there is a unique periodic attracting point $q$ of order $s+r$ in $K_{g^{-r}(\alpha)}$. Again, the intervals $\{g^i(K_{\lambda})\}_{i=-r}^{s-1}$ are disjoint and each of them contains a unique attracting periodic point in the orbit of $p$.\\

If the vertex $g^{-r}({\lambda})$ is a vertex of a tree in $Y$ under a repeller $\alpha$ and  the vertex $g^{s}(\lambda)$  is a vertex of a tree in $X$ under an attarctor $\omega$, then we see that the forward orbit of $K_{\lambda}$ is getting attracted toward the periodic orbit $p_{\omega}$ corresponding to $\omega$ and the backward orbit of $K_{\lambda}$ gets attracted toward the periodic repeller $p_{\alpha}$ corresponding to $\alpha$. \\

 \begin{figure}[ht]
  
  \centering
 
  \vspace{+10pt}
    \includegraphics[width=1.1\textwidth]{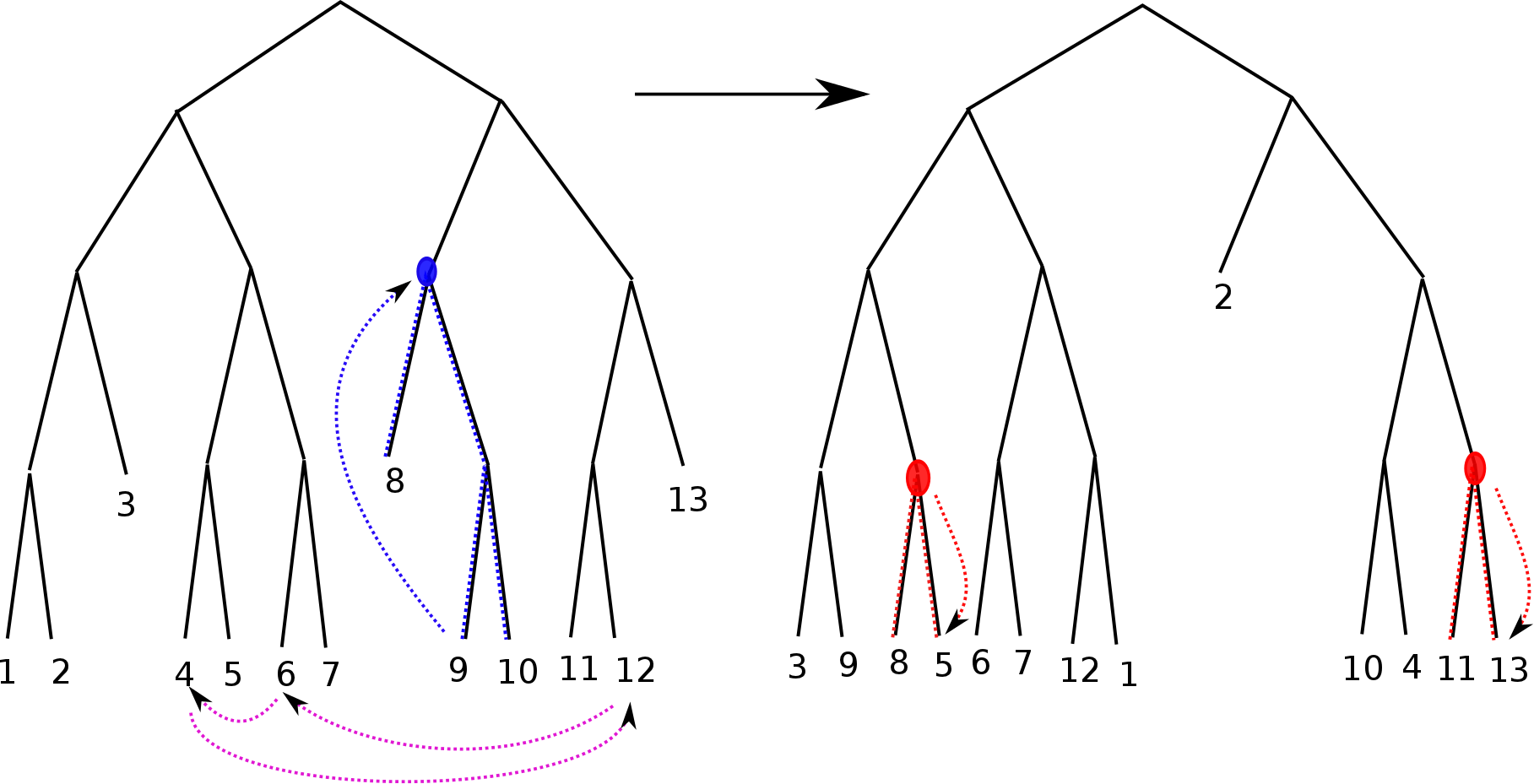}
     \vspace{+3pt}
  \caption{A revealing pair diagram for an element of $V_2$}
    \vspace{+3pt}
    \label{vngood}
\end{figure}

\textbf{Example} (see Figure \ref{vngood}). We use the following notation, for a number $j$, we denote by $j_A$ the vertex in $\mathcal{T}_2$ numbered by $j$ in the tree $A$. We define similarly the vertices $j_B$. Observe that for our particular example we have $1_A = 3_B$ and $2_A = 9_B$.\\

In Figure \ref{vngood}, $X$ is depicted blue. It consists of one rooted tree with root $2_B$. Observe that, for this element, we have $$9_A \to 9_B = 2_A \to 2_B$$ and the vertex $9_A$ is under $2_B$, and so $2_B$ is a repeller. There is a unique repelling periodic point under $2_B$ of period $2$.\\

The set $Y$ consists of two trees, one tree with root $3_A$ and the other one with root $13_A$, we have $$3_A \to 1_A \to 7_A \to 5_A \to 5_B$$ and $5_B$ is under $3_A$ and so $3_A$ is an attractor, there is an attracting periodic point of period $4$ under $3_A$ . We also notice that $13_B$ is under $13_A$ and so there is an attracting fixed point under $13_A$.\\

Observe also that  $10_A \to 11_A \to 11_B $, the vertex $10_A$ is under the repeller $2_B$ and $11_B$ under the attractor $13_A$, this means there are points arbitrarily close to the repelling periodic point that converge toward the attracting fixed point under $13_A$ and we also have $8_A \to 8_B$,  where $8_A$ is under the repeller $2_B$ and $8_B$ under the attractor at $3_A$, and so there are orbits going from the repelling periodic orbit to the attracting periodic orbit corresponding to $3_A$.\\
 
As a consequence of the previous discussion, we obtain the following lemma. For a more detailed discussion, see \cite{B} and \cite{SD}.\\

\begin{lem}\label{reveal}

Let $(A,B, \sigma)$ be a revealing pair for an element $g \in V_n$. Let $\lambda$ be a vertex in $L_{A,B, \sigma}$, then exactly one of the following holds.

\begin{enumerate}

\item $\lambda$ is periodic, in which case there is $t>0$ such that  $g^{t}{\lambda} = \lambda$ and $g^t|_{K_{\lambda}} = \text{Id}$.
\item $K_\lambda$ contains a unique contracting periodic point $p$ and there is $t > 0$ such that $g^{t}(K_\lambda) \subset K_\lambda$  and $g^{t}|_{K_{\lambda}}$ is contracting affinely (i.e $g^t$ sends the interval $K_{\lambda}$ into the interval $g^t(K_{\lambda})$ in the obvious way).
\item $K_\lambda$ contains a unique repelling periodic point $p$ in $\lambda$ and there is $r > 0$ such that $g^{-r}(K_\lambda) \subset K_\lambda $  and $g^{-r}|_ {K_{\lambda}}$ is contracting affinely.
\item There exist $s \geq 0, r \geq 0$ such that $g^s(\lambda)$ and $g^{r}(\lambda)$  are vertices but not roots of components of $Y$ and $X$ respectively. In this case, the following property holds. As  $n \to \infty$,   $g^n(K_\lambda)$ gets contracted affinely converging towards an attracting periodic orbit of $g$  and $g^{-n}(K_{\lambda})$ gets attracted towards a repelling periodic orbit of $g$. 

\end{enumerate}

\end{lem}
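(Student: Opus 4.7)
The plan is to run a case analysis on the iterated augmentation chain $\text{IAC}(\lambda)$, exploiting the defining property of a revealing pair: every root of a component of $Y$ is an attractor and every root of a component of $X$ is a repeller. First I would dispose of Case (1): if every forward iterate $g^i(\lambda)$ is neutral, then by finiteness of $L_{(A,B,\sigma)}$ the vertex $\lambda$ is periodic of some period $t$, and because every vertex along the cycle is a leaf of both $A$ and of $B$, the tree-pair prescription identifies $T_\lambda$ with itself level-by-level, forcing $g^t|_{K_\lambda}=\mathrm{Id}$.

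Assume now that $\lambda$ is not periodic and consider $\text{IAC}(\lambda)=(g^i(\lambda))_{i=-r}^{s}$. The extremal vertex $g^{-r}(\lambda)$ is a leaf of $A$ not of $B$, hence either a root of a $Y$-component or an interior leaf of an $X$-component, and symmetrically $g^s(\lambda)$ is either a root of an $X$-component or an interior leaf of a $Y$-component. If $\lambda$ itself is a root of a $Y$-component, the revealing hypothesis makes $\lambda$ an attractor, so by definition $r=0$ and $g^s(\lambda)\in B\setminus A$ lies strictly in $T_\lambda$; then $g^s$ maps $K_\lambda$ affinely into a proper sub-interval of itself, producing a unique attracting periodic point of period $s$. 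This is Case (2). A symmetric argument applied to $g^{-1}$ at a root of an $X$-component gives Case (3). In every other configuration $\lambda$ is either neutral in the strict interior of the chain, or an interior leaf of an $X$- or $Y$-component; then after finitely many tree-pair steps the forward orbit of $K_\lambda$ is absorbed into the basin of the attractor $\omega$ sitting at the root of the $Y$-component containing the forward terminus, and the backward orbit into the basin of the repeller $\alpha$ at the root of the $X$-component containing the backward terminus, yielding the contraction statements of Case (4).

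The main obstacle is purely bookkeeping: one must verify that in a revealing pair the chain cannot connect a root of $Y$ directly to a root of $X$. This is automatic since the attractor definition requires $g^s(\lambda)\in B\setminus A$ while roots of $X$ sit in $A\cap B$, so in a revealing pair this configuration is excluded. Once it is ruled out, the three surviving configurations match Cases (2), (3) and (4), and mutual exclusivity of the four cases follows from the incompatibility of their descriptions.
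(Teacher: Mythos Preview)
Your approach mirrors the paper's --- the lemma is stated there as a direct consequence of the iterated augmentation chain discussion that precedes it, and you run essentially the same analysis. However, your case split is misaligned with the lemma's four cases, and this creates a gap.

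You key Cases (2) and (3) on whether $\lambda$ \emph{itself} is a root of a $Y$- or $X$-component, and then push every remaining non-periodic $\lambda$ into Case (4). But the lemma's dichotomy depends on the \emph{endpoints} $g^{-r}(\lambda)$ and $g^{s}(\lambda)$ of the chain, not on $\lambda$. Concretely, take $\lambda$ neutral with $r>0$ and $g^{-r}(\lambda)$ an attractor (a root of a $Y$-component). Then $g^{s+r}(K_\lambda)\subset K_\lambda$ is an affine contraction and $K_\lambda$ contains exactly one point of the attracting periodic orbit; this is Case (2), not Case (4). Your Case (4) argument actually fails for such $\lambda$: you assert that the backward terminus lies in an $X$-component under some repeller $\alpha$, but here $g^{-r}(\lambda)$ is the root of a $Y$-component, so there is no repeller for the backward orbit to approach. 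The symmetric problem occurs for neutral $\lambda$ in a repeller chain.

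The fix is to split according to the chain's endpoints, as the paper does: if $g^{-r}(\lambda)$ is a $Y$-root (attractor), every vertex of the chain falls in Case (2); if $g^{s}(\lambda)$ is an $X$-root (repeller), every vertex falls in Case (3); only when $g^{-r}(\lambda)$ is a non-root leaf of an $X$-component \emph{and} $g^{s}(\lambda)$ is a non-root leaf of a $Y$-component does Case (4) apply. Your final paragraph correctly rules out the root--root configuration, so these three possibilities are exhaustive and mutually exclusive.
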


In the proof of Theorem \ref{main}, we use the following notation which makes the proof easier to digest.\\

\begin{defn}[Neighborhoods] In the Cantor set $K_n$, let us consider the metric coming from the standard embedding of $K_n$ in the interval $[0,1]$. For a point $p \in K_n$ and $\epsilon>0$, we define the neighborhood $N_{\epsilon}(p)$ as the  maximal elementary interval $I$ such that $p \in I$ and $\text{length}(I) < \e$. Similarly, if $S$ is a finite set, we define $N_{\e}(S) := \cup_{s \in S}N_{\e}(s)$.
\end{defn}

\begin{defn} For any element $g \in V_n$, we define the following: $$\atr(g):= \{\:p \in K_n \: \text{  such that } p \text{ is periodic and attracting}  \}$$ $$\rep(g):= \{\:p \in K_n \: \text {  such that } p \text{ is periodic and repelling}  \}$$ $$\per(g) := \atr(g) \cup \rep (g)$$ By Lemma \ref{reveal}, both sets $\atr(g)$ and $\rep(g)$ are finite. Hence the set $\per(g)$ is also finite. 

\end{defn}

As a conclusion of Lemma \ref{reveal} we obtain the following lemma that enumerates all the dynamical properties of the action of elements $V_n$ on $K_n$ that we will use.\\

\begin{lem}\label{contra}

Given an element $g \in V_n$ there exist two $g-$invariant clopen sets $U_g, V_g$ (i.e. finite union of  elementary intervals in $K_n$) such that:

\begin{enumerate}

\item $K_n = U_g \cup V_g$.\\ 
\item $g|_{U_g}$ has finite order.\\
\item There are finitely many periodic points of $g$ contained in $V_g$( the set $\per(g)$) and the dynamics of $g|_{V_g}$ are ``attracting-repelling" i.e.  for every $\e >0$, there exists $m_0$  such that, for $m \geq m_0$, we have: $$g^m(V_g \setminus N_\e(\rep(g))) \subset N_{\e}(\atr(g))$$   $$g^{-m}(V_g \setminus N_\e(\atr(g))) \subset N_{\e}(\rep(g)).$$
\item  If $\epsilon$ is small enough, for any point $p \in \atr(g)$, there exists $s$ (the period of $p$) such that $g^s(N_{\epsilon}(p)) \subset N_{\e}(p)$ and $g^s|_{N_{\e}(p)}$ is an affine contraction. The analogous condition also holds for points in $\rep(g)$.
\end{enumerate}

\end{lem}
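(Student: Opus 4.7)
The plan is to derive Lemma \ref{contra} from the case-by-case classification in Lemma \ref{reveal}. I would fix a revealing tree pair diagram $(A,B,\sigma)$ for $g$, which exists by Theorem \ref{rp}. The leaves of $A$ give a finite clopen partition of $K_n$ into elementary intervals $K_\lambda$, and each leaf falls into exactly one of the four cases of Lemma \ref{reveal}. Setting
\[
U_g \;=\; \bigcup_{\lambda \text{ in case (1)}} K_\lambda, \qquad V_g \;=\; K_n \setminus U_g,
\]
property (1) is automatic. For (2), the fact that $\sigma(\lambda)=g(\lambda)$ is a periodic neutral leaf whenever $\lambda$ is shows $g$-invariance of $U_g$, and case (1) of Lemma \ref{reveal} gives $g^{t_\lambda}|_{K_\lambda}=\mathrm{Id}$; taking $t$ as the lcm of the finitely many periods yields $g^{t}|_{U_g}=\mathrm{Id}$. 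Property (4) is a direct restatement of the affine-contraction conclusions in cases (2) and (3) of Lemma \ref{reveal}, applied to an elementary sub-interval $N_\epsilon(p)$ chosen small enough to lie inside the ambient $K_\lambda$ produced by that lemma.

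The finiteness of $\per(g)$ in (3) is immediate: cases (2) and (3) each contribute exactly one periodic orbit per leaf, case (4) contributes none, and there are only finitely many leaves. For the attracting-repelling convergence in (3), given $\epsilon>0$ I would first pick $\delta\in(0,\epsilon]$ small enough that continuity of the finitely many iterates $g^{j}$ for $0\leq j<t_q$ at each $q\in\atr(g)$ gives $g^{j}(N_\delta(q))\subset N_\epsilon(g^{j}(q))$. Combined with (4), establishing the single inclusion $g^{m_0}(V_g\setminus N_\epsilon(\rep(g)))\subset N_\delta(\atr(g))$ for one $m_0$ propagates it to every $m\geq m_0$. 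By compactness it then suffices to show that each $p$ in the compact clopen set $V_g\setminus N_\epsilon(\rep(g))$ has some forward iterate inside $N_\delta(\atr(g))$, and to cover the set by finitely many preimages.

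The main obstacle is handling points $p$ inside a case (3) interval $K_\lambda$. For $\lambda$ in case (2) or (4), Lemma \ref{reveal} directly gives $g^{m}(K_\lambda)\subset N_\delta(\atr(g))$ for all large $m$. For $\lambda$ in case (3), the affine contraction of $g^{-r}|_{K_\lambda}$ toward the repeller $q_0$ yields $g^{-Nr}(K_\lambda)\subset N_\epsilon(q_0)$ for $N$ large, so every $p\in K_\lambda\setminus N_\epsilon(\rep(g))$ is ejected from $K_\lambda$ by $g^{Nr}$, but one must rule out orbits that bounce forever between different case (3) intervals. I would close this by analyzing the $\omega$-limit set $\omega(p)\subset V_g$: if $\omega(p)$ meets any case (2) or case (4) interval, the preceding arguments force $\omega(p)\cap\atr(g)\neq\emptyset$ and hence $g^{m}(p)\to\atr(g)$; otherwise $\omega(p)$ is contained in the finite union of case (3) intervals, and the strict expansion $g^{s}(N_{\delta'}(q))\supsetneq N_{\delta'}(q)$ at each repeller (the dual to (4), coming from case (3)) shows that the orbit of $p$ cannot remain inside any sufficiently small neighborhood of $\rep(g)$, contradicting $\omega(p)\subset\rep(g)$. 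This dichotomy yields the required $m_0$.
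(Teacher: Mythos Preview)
Your approach is exactly what the paper intends: it offers no proof beyond the sentence ``As a conclusion of Lemma \ref{reveal} we obtain the following lemma,'' so your derivation of $U_g$, $V_g$ and properties (1), (2), (4) and the finiteness in (3) from the case partition of Lemma \ref{reveal} is already more than the paper provides, and is correct.

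There is, however, one gap in your treatment of the attracting--repelling convergence. In the final dichotomy you pass from ``$\omega(p)$ lies in the union of case (3) intervals'' to ``$\omega(p)\subset\rep(g)$'' without justification, and then derive a contradiction from the local expansion at repellers. But a priori nothing prevents the forward orbit of a non-repelling point in one case (3) interval $K_\lambda$ from exiting and landing in \emph{another} case (3) interval $K_{\lambda'}$; your expansion argument does not by itself rule out indefinite recurrence among several repelling cycles, so the implication is not immediate.

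The clean fix is a structural fact you can read off the discussion preceding Lemma \ref{reveal}: if $\rho$ is a repeller (root of an $X$-component), then among the leaves of $A$ lying under $\rho$ exactly one, namely $g^{-r}(\rho)$, belongs to the repelling cycle (case (3)); every other such leaf has $r=0$ in its IAC and its forward IAC terminates under an attractor, so it is case (4). Hence a forward orbit exiting a case (3) cycle necessarily enters a case (4) leaf. With this, your dichotomy closes: a $g$-invariant $\omega(p)$ contained in case (3) intervals cannot contain any non-periodic point, so indeed $\omega(p)\subset\rep(g)$ and your expansion argument applies. In fact this observation lets you bypass $\omega$-limits entirely and argue directly that every point of $V_g\setminus N_\epsilon(\rep(g))$ reaches a case (2) or case (4) leaf in a number of steps bounded uniformly in $\epsilon$.
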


\end{subsection}

\begin{subsection}{Proof of Theorem \ref{main}}

The idea of the proof of Theorem \ref{main} is to use the ``attracting-repelling" dynamics of elements of $V_n$ and  the ``ping-pong" lemma to obtain a free subgroup $\mathbb{F}_2$ contained in $\Gamma$ (this strategy was the one used by Margulis to prove his ``alternative" for $\text{Homeo}(\s^1)$, see \cite{M}). To illustrate the idea of the proof, suppose our group $G$ contains two elements $f$ and $h$ such that $h$ sends $\per(f) \cup U_f$ disjoint from itself. In that case, if we consider the element $g = hfh^{-1}$, the sets $\per(f) \cup U_f$ and $\per(g) \cup U_g$ are disjoint.\\

Under this last condition, one can apply the ping-pong lemma (see \cite{H}, Ch. 2) as follows to show that $\langle f^n,g^n \rangle$ generate a free group if $n$ is large enough. Let us take small disjoint neighborhoods $N_\e(\per(f))$ and $N_{\e}(\per(g))$  and consider the set $$X = K \setminus (U_f \cup U_g \cup N_\e(\per(f)) \cup N_{\e}(\per(g)) ).$$ If we take $\epsilon$ small enough, then $X \neq \emptyset$, and, by Lemma \ref{contra}, if $n$ is large enough, we have $$f^{n}(X) \subset N_\e(\atr(f)) $$ $$f^{-n}(X) \subset N_\e(\rep(f)) $$  and we also have $$f^{n}(N_\e(\per(g))) \subset N_\e(\atr(f))$$ $$f^{-n}(N_\e(\per(g))) \subset N_\e(\rep(f)).$$ The corresponding statement for $g^n$ and $g^{-n}$ are also true. This implies that $f^n, g^n$ generate a free group, as for any nontrivial word $w$ on the elements $f^n, g^n$, we have by the ``ping-pong" argument that $w(X) \subset N_{\epsilon}(\per(f) \cup \per (g))$  and therefore $w \neq \text{Id}$.\\ 

As a conclusion, we have proved the following lemma:

\begin{lem}\label{pingpong} Let $\Gamma \subset V_n$. If there are two elements $f, h \in \Gamma$ such that the sets $\per(f) \cup U_f$ and  $h(\per(f) \cup U_f)$ are disjoint, then $\Gamma$ contains a free subgroup on two generators.

\end{lem}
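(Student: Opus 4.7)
The plan is to follow the ping-pong strategy sketched in the paragraph immediately preceding the statement, with $g := h f h^{-1}$ as the second generator. Conjugation by $h$ carries an $f$-invariant decomposition provided by Lemma~\ref{contra} to a $g$-invariant one, so I may take $U_g := h(U_f)$, $V_g := h(V_f)$, and consequently $\per(g) = h(\per(f))$, $\atr(g) = h(\atr(f))$ and $\rep(g) = h(\rep(f))$. The hypothesis of the lemma then becomes
$$(\per(f) \cup U_f) \cap (\per(g) \cup U_g) = \emptyset,$$
which in particular forces $\per(f) \subset V_g$, $\per(g) \subset V_f$, and $U_f \cap U_g = \emptyset$.

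Next I will choose $\e > 0$ small enough that the four sets $N_\e(\atr(f))$, $N_\e(\rep(f))$, $N_\e(\atr(g))$, $N_\e(\rep(g))$ are pairwise disjoint, all contained in $V_f \cap V_g$, and satisfy the invariance property of item~(4) of Lemma~\ref{contra} under the relevant return maps. Set $A := N_\e(\per(f))$ and $B := N_\e(\per(g))$; these are disjoint nonempty clopen subsets of $K_n$, which will play the roles of the two sides of the ping-pong table.

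The key analytic step is to produce an integer $N$ such that $f^{Nk}(B) \subset A$ and $g^{Nk}(A) \subset B$ for every $k \in \mathbb{Z}\setminus\{0\}$. For $k \geq 1$: since $B \subset V_f$ and $B$ is disjoint from $N_\e(\rep(f))$, item~(3) of Lemma~\ref{contra} yields $f^N(B) \subset N_\e(\atr(f)) \subset A$ once $N$ is sufficiently large, and item~(4) then propagates the inclusion through all further positive iterates $f^{Nk}$. The case $k \leq -1$ is symmetric, with the roles of $\atr$ and $\rep$ swapped, and the two statements for $g^N$ acting on $A$ are entirely analogous. The classical Klein ping-pong lemma (see \cite{H}, Ch.~2) applied to $(f^N, g^N)$ with these sets $A$ and $B$ then yields that $\langle f^N, g^N \rangle$ is a free subgroup of rank two inside $\Gamma$.

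The only real obstacle is bookkeeping: $N$ must be chosen as a common multiple of all the periods of the attracting and repelling points of $f$ and of $g$ (so that item~(4) applies uniformly to all the small clopen neighborhoods), and simultaneously large enough that item~(3) fires for both $f^{\pm 1}$ and $g^{\pm 1}$. That the decomposition of Lemma~\ref{contra} for $g = hfh^{-1}$ may be chosen as the $h$-translate of the one for $f$ is immediate from the (non-uniqueness) wording of that lemma, and is precisely what makes the conjugation trick work.
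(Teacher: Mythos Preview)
Your proposal is correct and follows essentially the same approach as the paper: set $g = hfh^{-1}$, choose $\e$ small enough that the neighborhoods $N_\e(\per(f))$ and $N_\e(\per(g))$ are disjoint and sit inside $V_f \cap V_g$, and run a ping-pong argument using Lemma~\ref{contra}. The only cosmetic difference is that the paper introduces an auxiliary set $X = K \setminus (U_f \cup U_g \cup N_\e(\per(f)) \cup N_\e(\per(g)))$ and shows every nontrivial word in $f^n, g^n$ sends $X$ into $N_\e(\per(f) \cup \per(g))$, whereas you apply the classical two-set Klein criterion directly with $A = N_\e(\per(f))$ and $B = N_\e(\per(g))$; the underlying dynamical inclusions are identical.
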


To prove Theorem \ref{main}, we will show that either a pair of elements $f,h$ of $\Gamma$ as in Lemma \ref{pingpong} exists or that $\Gamma$ has a finite orbit in $K_n$.  The following result is the key lemma for proving the existence of such an element $h$ sending $\per(f) \cup U_f$ disjoint from itself. It is based on a recent proof by Camille Horbez (See \cite{CH1}, Sec.3, \cite{CH2}) of the Tits alternative for mapping class groups, outer automorphisms of free groups and other related groups.\\

\begin{lem}\label{period} Let $\Gamma$ be a countable group acting on  a compact space $K$ by homeomorphisms and let $F \subset K$ be a finite subset. Then either there is finite orbit of $\Gamma$ on $K$ or there exists an element $g\in \Gamma$ sending $F$ disjoint from itself (i.e. $g(F ) \cap F =  \emptyset $).

\end{lem}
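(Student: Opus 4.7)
The plan is to prove the contrapositive: assuming $g(F) \cap F \neq \emptyset$ for every $g \in \Gamma$, I will produce a finite $\Gamma$-orbit in $K$. Write $F = \{x_1, \ldots, x_n\}$ and, for each $i \in \{1, \ldots, n\}$, set
\[
B_i = \{g \in \Gamma : g(x_i) \in F\}.
\]
The hypothesis says that for every $g \in \Gamma$ there is some $i$ with $g(x_i) \in F$, i.e.\ $g \in B_i$. Therefore $\Gamma = B_1 \cup \cdots \cup B_n$.

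Next, decompose each $B_i$ according to the value of $g(x_i) \in F$. For each ordered pair $(i,j)$ the set $\{g \in \Gamma : g(x_i) = x_j\}$ is either empty (if $x_j \notin \Gamma \cdot x_i$) or a single left coset of the point-stabiliser $\mathrm{Stab}_\Gamma(x_i)$. Consequently $B_i$ is a union of at most $n$ left cosets of $\mathrm{Stab}_\Gamma(x_i)$, and putting everything together $\Gamma$ is covered by at most $n^2$ left cosets of the $n$ subgroups $\mathrm{Stab}_\Gamma(x_1), \ldots, \mathrm{Stab}_\Gamma(x_n)$.

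The key step is then to invoke the classical theorem of B.~H.~Neumann: whenever a group is a finite union of left cosets of subgroups, at least one of those subgroups has finite index. Applied to the covering above, this forces $\mathrm{Stab}_\Gamma(x_i)$ to have finite index in $\Gamma$ for some $i$. Equivalently, the orbit $\Gamma \cdot x_i$ is finite, which produces the required finite $\Gamma$-orbit on $K$ and completes the contrapositive.

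The only real obstacle is recognising that B.~H.~Neumann's coset covering theorem is exactly the right tool; once that is in hand, the argument reduces to the bookkeeping above. It is worth noting that neither the compactness of $K$ nor the countability of $\Gamma$ is actually needed in this proof, which suggests that these hypotheses appear in the statement merely to match the setting in which the lemma will be applied.
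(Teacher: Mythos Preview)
Your proof is correct and takes a genuinely different route from the paper. The paper argues probabilistically: it fixes a measure $\mu$ on $\Gamma$ with full support, looks at the diagonal action on $K^{n}$, builds a $\mu$-stationary (harmonic) measure $\nu$ supported on $\overline{\Gamma\cdot\vec{p}}$, observes that the assumption forces this closure to sit inside $\bigcup_{i,l,m} K^{l}\times\{p_i\}\times K^{m}$, and then uses maximality of $\nu(K^{l}\times\{q\}\times K^{m})$ together with the stationarity equation to conclude that the orbit of $q$ is finite. This is where the countability of $\Gamma$ (to choose $\mu$) and the compactness of $K$ (to extract $\nu$ as a weak-$*$ limit) are actually used. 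Your argument, by contrast, is purely group-theoretic: you rewrite the hypothesis as a covering of $\Gamma$ by at most $n^{2}$ cosets of the stabilisers $\mathrm{Stab}_{\Gamma}(x_i)$ and invoke B.~H.~Neumann's theorem to get a finite-index stabiliser. This is shorter, more elementary, and, as you note, needs neither compactness nor countability; it also directly yields that some $x_i\in F$ has finite orbit, a conclusion the paper only mentions as obtainable ``with a little bit more extra work'' in a remark after its proof. The trade-off is that the paper's stationary-measure technique is the one that generalises to the Horbez-style arguments it cites, whereas your approach is tailored to this finite combinatorial situation.
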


Before beginning the proof of Lemma \ref{period}, we recall the following basic notions of random walks on groups and harmonic measures.\\

For a discrete group $\Gamma$, let us take a probability measure $\mu$ on $\Gamma$ and suppose that $\langle \text{supp}(\mu) \rangle = \Gamma$.  Suppose our group $\Gamma$ acts continuously on a compact space $X$. A harmonic measure in $X$ for $(\Gamma, \mu)$ is a Borel probability measure $\nu$ on $X$ such that $\mu * \nu = \nu$, where "$*$" denotes the convolution operator. This means that, for every $\nu$-measurable set $A\subseteq X$,
\begin{equation}
\nu(A) = \sum_{g \in \Gamma} \nu(g^{-1}(A))\mu(g) 
\end{equation}

A harmonic measure always exists (see the proof below)  and one can think of it as a measure on $X$ that is invariant under the action of $\Gamma$ on average (with respect to $\mu$).\\

\begin{proof}[Proof of Lemma \ref{period}] Suppose that there is no element of $\Gamma$ sending $F$ disjoint from itself. Let $n = |F|$. If $n=1$, the theorem is obvious and so we assume $n > 1$. Consider the diagonal action of $\Gamma$ on $K^n$. Let  $\vec{p} = (p_1,p_2,...,p_n)$ be an $n$-tuple consisting of the $n$ different elements of $F$ in some order. We take a probability measure $\mu$ supported in our group $\Gamma$ such that $\langle \text{supp}{\mu} \rangle = \Gamma$ and take a harmonic probability measure $\nu$ on $K^n$ supported in $\overline{\Gamma \vec{p}}$. Such a harmonic measure $\nu$ can be obtained as follows: Take the Dirac probability measure $\delta_{\vec{p}}$ in $K^n$ supported in $\left\{ \vec{p} \right\}$ and consider the averages of convolutions $\nu_l :=\frac{1}{l} \sum^l_{i=1} \mu^i * \delta_{\vec{p}}$ ($\mu^i$ is the measure obtained by convoluting $\mu$ $i$ times with itself). Then $\nu$ can be taken as any accumulation point of $\nu_l$ in the space of probability measures in $K^n$.\\

Observe that, by our assumption, for each $g \in \Gamma$,  the element $g(\vec{p})$ is contained in a set of the form $K^l\times{\{p_i\}}\times K^m$ for some integers $i$, $l$ and $m$ such that $l+m=n-1$ and therefore: $$\overline{\Gamma \vec{p}} \subset \bigcup_{0 \leq i \leq n,\text{ } l+m={n-1}} K^l\times{\{p_i\}}\times K^m.$$ As $\nu(\overline{\Gamma \vec{p}}) = 1$, we can conclude that there exist integers $i$, $l$ and $m$ such that $\nu (K^l\times{\{p_i\}}\times K^m) > 0$.\\ 

Let us take $q \in K$ such that $\nu (K^l\times{\{q\}}\times K^m)$ is maximal. We will show that $q$ has a finite $\Gamma$-orbit. Observe that, for $g \in \Gamma$, $g(K^l\times{\{q\}}\times K^m) = K^l\times{\{g(q)\}}\times K^m$ and, therefore,
$$\nu (K^l\times{\{q\}}\times K^m) = \sum_i \nu (K^l\times{\{g_i^{-1}(q)\}}\times K^m)  \mu(g_i).$$ So we obtain by our maximality assumption that $\nu (K^l\times{\{q\}}\times K^m) = \nu (K^l\times{\{g^{-1}(q)\}}\times K^m)$  for every $g$ in the support of $\mu$. Hence this also holds for every $g \in \Gamma$. But being $\nu$ a probability measure this can only happen if the orbit $\Gamma(q)$ is finite and so we are done.
\end{proof}

\begin{rem} One can also conclude with a little bit more of extra work that, for some $i$, $\Gamma(p_i)$ is a finite orbit. We will not make use of this fact.

\end{rem}

The following proposition is our main tool to construct free subgroups of a subgroup $\Gamma$ of $V_n$.

\begin{prop}\label{rf} Suppose $f,g \in \Gamma \subset V_n$ are such that $U_f$ and $U_g$ are disjoint and suppose there is no periodic orbit for $\Gamma$ in $K_n$. Then, there exists a free subgroup on two generators contained in $\Gamma$.\\
\end{prop}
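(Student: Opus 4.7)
The plan is to produce a single element $\phi \in \Gamma$ with $U_\phi = \varnothing$, and then combine Lemmas \ref{period} and \ref{pingpong} to extract a free subgroup. Once such a $\phi$ is in hand, $\per(\phi)$ is a finite subset of $K_n$, so the no-finite-orbit hypothesis together with Lemma \ref{period} applied to $F = \per(\phi)$ produces $h \in \Gamma$ with $h(\per(\phi)) \cap \per(\phi) = \varnothing$. Since $U_\phi = \varnothing$, this is exactly the hypothesis of Lemma \ref{pingpong} applied to the pair $(\phi, h)$, which then yields a free subgroup on two generators inside $\Gamma$.

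To build $\phi$, I would first replace $f$ and $g$ by high powers $f_0 = f^N$ and $g_0 = g^N$, where $N$ is a common multiple of the periods supplied by Lemma \ref{contra}(2), so that $f_0|_{U_f} = \Id$ and $g_0|_{U_g} = \Id$ while $U_{f_0} = U_f$, $\per(f_0) = \per(f)$, $\atr(f_0) = \atr(f)$, $\rep(f_0) = \rep(f)$, and analogously for $g_0$; by Lemma \ref{contra}(4), $N$ may also be chosen so that the attracting/repelling dynamics on $V_f$ and $V_g$ are as strong as desired. The hypothesis $U_f \cap U_g = \varnothing$ then yields a clopen partition $K_n = A \sqcup B \sqcup C$ with $A = U_f$, $B = U_g$, $C = V_f \cap V_g$, so that $V_f = B \cup C$ and $V_g = A \cup C$.

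Set $\phi := f_0 g_0$. On $B$, the map $g_0$ is the identity and $\phi|_B = f_0|_B$ inherits the attracting/repelling behavior of $f_0|_{V_f}$. On $A$, the map $g_0$ contracts $A \subset V_g$ toward $\atr(g)$ (or expands from $\rep(g)$), and then $f_0$ either acts trivially (on the portion of $g_0(A)$ landing back in $A$) or with attracting/repelling dynamics (on the portion landing in $C$). On $C$, both maps contribute nontrivial affine scalings. The claim is that $U_\phi = \varnothing$: if $\phi^M|_I = \Id$ for some elementary interval $I$ and some $M \geq 1$, then the orbit $I, \phi(I), \ldots, \phi^{M-1}(I)$ is a cycle of elementary intervals of equal diameter, forcing the product of the scaling factors $\lambda^{m_k}$ produced by the successive applications of $g_0$ and $f_0$ to equal $1$. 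A factor can equal $1$ only when $g_0$ (respectively $f_0$) acts as the identity, i.e.\ on pieces of the orbit inside $B$ (respectively inside $A$); but on such pieces the other factor of $\phi$ acts as a strictly contracting or expanding map, since $B \subset V_f$ and $A \subset V_g$, contradicting the cyclic constraint.

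The main obstacle is excluding the possibility that, on some elementary interval $I \subset C$, the affine maps $f_0|_{g_0(I)}$ and $g_0^{-1}|_{g_0(I)}$ coincide, which would make $\phi|_I = \Id$ and spoil the argument. Such a coincidence is non-generic and can be removed by first applying Lemma \ref{period} to the finite set $\per(f) \cup \per(g) \cup \atr(g) \cup \rep(g)$ to obtain $h \in \Gamma$ moving this set off itself, and then replacing $g_0$ by the conjugate $h g_0 h^{-1}$ before forming the product $\phi$. The resulting general position forces any hypothetical cycle to contain a step of nontrivial scaling that cannot be cancelled, so that $U_\phi = \varnothing$ as required.
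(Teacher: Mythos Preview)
Your overall strategy---produce $\phi$ with small periodic locus, then use Lemma~\ref{period} and Lemma~\ref{pingpong}---is the paper's strategy too. The gap is in the claim that $U_\phi=\varnothing$ for $\phi=f_0g_0$. Your argument shows that at each step of a putative cycle $I,\phi(I),\ldots,\phi^{M-1}(I)$ at least one of the two factors $f_0,g_0$ contributes a nontrivial power of $\lambda$; but this does \emph{not} prevent the total exponent over the cycle from vanishing, since contractions at some steps can be exactly cancelled by expansions at others. Concretely, if a repeller $r$ of $f_0$ coincides with an attractor of $g_0$ and the local scaling exponents agree, then $\phi$ is the identity on a neighbourhood of $r$, so $U_\phi\neq\varnothing$. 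Your proposed fix---conjugating $g_0$ by an $h$ from Lemma~\ref{period}---does separate $\per(f)$ from $\per(g)$, but it also moves $U_g$ to $h(U_g)$, which need no longer be disjoint from $U_f$; the partition $A\sqcup B\sqcup C$ on which your whole analysis rests is then lost. And even with $\per(f)\cap\per(g)=\varnothing$, nothing rules out longer cycles with net exponent zero; the appeal to ``general position'' is not a proof.

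The paper sidesteps this by proving the weaker statement that for every $\epsilon>0$ there is $w_\epsilon=g^{m_1}f^{m_2}$ with $U_{w_\epsilon}\cup\per(w_\epsilon)\subset N_\epsilon(\per(f)\cup\per(g))$. This is established not by tracking scaling exponents but by a direct set-theoretic squeeze: one exhibits an explicit set $X\subset N_\epsilon(\atr(f)\cup\atr(g))$ with $w_\epsilon(X)\subset X$ and $w_\epsilon\bigl(K\setminus N_\epsilon(\rep(f)\cup\rep(g))\bigr)\subset X$, which forces every periodic point of $w_\epsilon$ into $N_\epsilon(\per(f)\cup\per(g))$. Then Lemma~\ref{period} is applied once, to the \emph{fixed} finite set $\per(f)\cup\per(g)$, giving an $h$ that moves its $\epsilon$-neighbourhood off itself for all small $\epsilon$; Lemma~\ref{pingpong} finishes. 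The key point is that one never needs $U_{w_\epsilon}=\varnothing$, only that it is trapped near a finite set independent of $\epsilon$.
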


\begin{proof}

Taking powers of $f$ and $g$ we can suppose that $f|_{U_f} = \text{Id}$,  $g|_{U_g} = \text{Id}$ and that the repelling and attracting periodic points of $f$ and  $g$ are fixed by $f$ and $g$ respectively. We will prove that, given any $\epsilon >0$, there exists an element $w_{\e} \in G$ such that $\per(w_{\e}) \cup U_{w_{\e}}$ is contained in $N_\e(\per(f) \cup\per(g))$. First, let us show this implies Proposition \ref{rf}.\\

By Lemma \ref{period}, we can find an element $h \in \Gamma$ sending $\per(f)\cup \per(g)$ disjoint from itself. Hence, if $\epsilon$ is small enough, $h$ sends $N_\e(\per(f)) \cup N_{\e}(\per(g))$ disjoint from itself, which implies that the sets $\per(w_{\e}) \cup U_{w_{\e}}$ and $h(\per(w_{\e}) \cup U_{w_{\e}})$ are disjoint. By Lemma \ref{pingpong}, $\Gamma$ contains a free subgroup.\\

We will prove that our desired element $w_{\epsilon}$ can be taken of the form $w_{\e} := g^{m_1}f^{m_2}$. To illustrate the idea of the proof of this fact, suppose first that $\per(f)$ and $\per(g)$ are disjoint. In this case, let us define the set $ V := V_f - N_{\e}(\rep(f))$. Take $\e$ small enough so that, for a point $p \in \atr(f)$ either  $N_\e(p) \subset U_g$ or $N_{\e}(p)$ is contained in $V_g \setminus N_\e(\rep(g))$. Also, take $\e$ small enough so that the sets $N_\e(\atr(f))$, $N_\e(\rep(f))$, $N_\e(\atr(g))$ and $N_\e(\rep(g))$ are pairwise disjoint. By Lemma \ref{contra}, we can take $m$ large enough so that $f^m(V) \subset N_{\epsilon}(\atr(f))$ and $g^m(N_\e(\atr(f))) \subset  N_\e(\atr(f) \cup \atr(g))$. As a conclusion, we obtain that: 
\begin{equation}\label{1}
g^mf^m(V) \subset N_\e(\atr(f)) \cup N_\e(\atr(g)).
\end{equation}

Also, if we consider the set $U := U_f \setminus N_\e(\rep(g))$, taking $m$ larger if necessary, we have

 \begin{equation}\label{2}
g^mf^m(U) = g^m(U) \subset g^m(V_g  \setminus N_\e(\rep(g))) \subset N_\e(\atr(g)).
\end{equation}

As we are assuming for the moment that the sets $\per(f)$ and $\per(g)$ are disjoint, we also obtain that 

\begin{equation}\label{3}
g^mf^m(N_\e(\atr(f)) \cup N_\e(\atr(g))) \subset N_\e(\atr(f)) \cup N_\e(\atr(g))
\end{equation}

Inclusions \ref{1}, \ref{2} and \ref{3} imply that, for $\e$ small enough and $m$ sufficiently large, all the periodic points of $ w_{\e}:= g^mf^m$ in $K \setminus N_{\e}(\rep(f) \cup \rep(g)) \subset U \cup V $ must be contained in $ N_\e(\atr(f)) \cup N_\e(\atr(g))$ and therefore  the periodic points of $w_{\e}$ must be contained in $N_\e(\per(f)) \cup N_{\e}(\per(g))$ as we wanted.\\

 To finish the proof of Proposition \ref{rf}, we need to deal with the case where $\per(f)$ and $\per(g)$ have points in common. This case is significantly trickier but the proof is similar to the one above. We include this case as an independent lemma:\\

\begin{lem}\label{disjoint} Let $f$, $g$ be elements of $V_n$ such that $U_f$ and $U_g$ are disjoint. For every $\epsilon > 0$, we can find an element $w_{\e} \in V_n$ of the form $w_{\e} = g^{m_1}f^{m_2}$ such that all the periodic points of $w_{\e}$ (\emph{i.e.} $U_{w_{\e}} \cup \per(w_{\e})$) are contained in $N_{\e}(\per(f) \cup \per(g))$.\\

 \end{lem}
 
\begin{proof}

We can suppose all the periodic points of $f$ and $g$ are fixed and, taking $\e>0$ small enough, we can suppose that  for $p \in \per(f)\cup \per(g)$, the sets $N_{\e}(p)$ are pairwise disjoint  and entirely contained in the sets $U_f, V_f, U_g, V_g$ if $p$ intersects such a set.\\

Let $\e_0:= \e$ and take an integer $n$ large enough such that
$$ g^{n}(V_{g} \setminus N_{\epsilon_{0}}(\rep(g)))\subset N_{\epsilon_{0}}(\atr(g)).$$
Choose $0<\e_{1}<\e_{0}$ small enough so that
$$N_{\e_{1}}(\atr(g) \cap \rep(f)) \subset g^{n}(N_{\epsilon_{0}}(\atr(g) \cap \rep(f)))$$
and
$$g^{n}(N_{\e_{1}}(\rep(g)\cap \atr(f)))\subset N_{\epsilon_{0}}(\rep(g) \cap \atr(f)).$$
Finally choose an integer $m$ large enough so that
$$f^{m}(V_{f} \setminus N_{\epsilon_{1}}(\rep(f))) \subset N_{\epsilon_{1}}(\atr(f)).$$

We can now define the sets $$W_g:=V_g \setminus N_{\e_0}(\rep(g) \cup  (\atr(g)\cap \rep(f))) $$ and $$A_{\e_0,\e_1} := N_{\e_0} (\atr(g)\cap \rep(f)) \setminus N_{\e_1} (\atr(g) \cap \rep(f)).$$

We observe that by our choices of $\e_0,\e_1$ and $n$, we have: 
\begin{equation}\label{4}
g^n(W_g)  \subset N_{\e_0} (\atr(g) \setminus \rep(f)) \cup  A_{\e_0,\e_1}.
\end{equation}

We will show that the element $w_{\epsilon} :=g^nf^m$ has the desired properties. One should have in mind that $m$ is chosen much bigger than $n$ in order to guarantee that all the  points in $\atr(f)\cap \rep(g)$ are attractors for $w_{\e}$.\\
 
We define the set: 
$$X := N_{\e_0}(\atr(f)) \cup N_{\e_0} (\atr(g)\setminus\rep(f)) \cup A_{\e_0,\e_1}.$$
First, we show that $X$ is attracting most of $K$ towards itself. More concretely, we show the following:

\begin{lem}\label{hard}

For the set $X$ defined above, the following properties hold:

\begin{enumerate}
\item $g^nf^m(X) \subset X$ (Invariance)
\item $g^nf^m(K \setminus N_{\e_0}(\rep(f) \cup \rep(g))) \subset X$. (Contractivity)

\end{enumerate}
\end{lem}

\begin{proof}
We start by proving item (1). $X$ was defined as the union of the sets $N_{\e_0}(\atr(f)),  N_{\e_0} (\atr(g) \setminus \rep(f))$ and $A_{\e_0,\e_1}$, we will show that when we apply $g^nf^m$ to each of these sets, the resulting set is still contained in $X$. Let us start with $N_{\e_0}(\atr(f))$. We have:

$$ g^nf^m(N_{\e_0}(\atr(f))) \subset g^n(N_{\e_1}(\atr(f))).$$

To understand $g^n(N_{\e_1}(\atr(f)))$, we consider each of the cases whether $\atr(f)$ intersects the sets $\per(g)$, $U_g$ or none of them. Observe the following:\\

\begin{itemize}

\item By our choice of $\e_1$, we have: $$g^n(N_{\e_1}(\atr(f)\cap \rep(g))) \subset N_{\e_0}(\atr(f)\cap \rep(g)) \subset X.$$
\item As $g$ is attracting in $N_{\e_1}(\atr(g))$, we have: $$g^n(N_{\e_1}(\atr(f) \cap \atr(g))) \subset N_{\e_1}(\atr(f) \cap \atr(g)) \subset X.$$ 
\item As $g|_{U_g} = \text{Id}$, we have: $$g^n(N_{\e_1}(\atr(f)) \cap U_g) = N_{\e_1}(\atr(f)) \cap U_g \subset X$$ 
\item  As $V_g \setminus N_{\epsilon}(\per(g)) \subset W_g$ and by Inclusion \ref{4}  we know that  $g^n(W_g) \subset N_{\e_0} (\atr(g) \setminus \rep(f)) \cup  A_{\e_0,\e_1} \subset X$, we have:
 $$g^n(N_{\e_1}(\atr(f)) \cap (V_g \setminus N_{\epsilon}(\per(g)))) \subset g^n(W_g) \subset X.$$
\end{itemize}

As s consequence, we obtain that $g^n(N_{\e_1}(\atr(f))) \subset X$ and therefore $g^nf^m(N_{\e_0}(\atr(f))) \subset X$ as we wanted.\\

We now consider the set  $N_{\e_0} (\atr(g) \setminus \rep(f))$. We distinguish two cases, whether $N_{\e_0} (\atr(g) \setminus \rep(f))$ intersects $U_f$, or $V_f$. Let us consider the former case first. Observe that:
\begin{equation}\label{6}
f^m(N_{\e_0} (\atr(g) \setminus \rep(f)) \cap U_f) = N_{\e_0} (\atr(g) \setminus \rep(f)) \cap U_f.
\end{equation}

We clearly have: 
\begin{equation}\label{7}
g^n(N_{\e_0} (\atr(g) \setminus \rep(f))) \subset N_{\e_0} (\atr(g) \setminus \rep(f)).
\end{equation} From Inclusions \ref{6} and \ref{7} we obtain: 
\begin{equation}\label{8}
g^nf^m(N_{\e_0} (\atr(g) \setminus \rep(f)) \cap U_f) \subset X.
\end{equation}

Now we consider the set $N_{\e_0} (\atr(g) \setminus \rep(f)) \cap V_f$. We have: $$g^nf^m(N_{\e_0} (\atr(g) \setminus \rep(f)) \cap V_f) \subset g^nf^m(V_f \setminus N_{\e_0}(\rep(f))).$$

Observe that  $f^m(V_f \setminus N_{\e_0}(\rep(f))) \subset N_{\e_1}(\atr(f))$. We have already proved that $g^n(N_{\e_1}(\atr(f))) \subset X$ and so together with Inclusion \ref{8} we have $$g^nf^m(N_{\e_0} (\atr(g) \setminus \rep(f))) \subset X$$ as we wanted.\\

It remains to show that  $g^nf^m(A_{\e_0,\e_1}) \subset X$. Observe that $A_{\e_0,\e_1} \subset V_f \setminus N_{\e_1}(\rep(f))$ and also that $f^{m}(V_f\setminus N_{\e_1}(\rep(f))) \subset N_{\e_1}(\atr(f))$. Using the fact that $g^n(N_{\e_1}(\atr(f))) \subset X$ as we proved before, we obtain: $$g^nf^m(A_{\e_0,\e_1}) \subset g^n(N_{\e_1}(\atr(f))) \subset X.$$

We have shown so far that $g^nf^m(X) \subset X$. Along the way we also proved that $g^nf^m(V_f \setminus N_{\e_0}(\rep(f))) \subset X $. To conclude the proof of Lemma \ref{hard}, we only need to show that $g^nf^m(U_f \setminus N_{\e_0}(\rep(g))) \subset X$.\\

As the set $U_f$ is contained in the set $V_g$ (because $U_f$ and $U_g$ are disjoint) and so  the inclusion $U_f \setminus N_{\e_0}(\rep(g)) \subset W_g$ holds, we obtain:   $$g^nf^m(U_f \setminus N_{\e_0}(\rep(g))) = g^n(U_f \setminus N_{\e_0}(\rep(g))) \subset g^n(W_g)$$ and by Inclusion \ref{4} we have: $$g^n(W_g) \subset N_{\e_0} (\atr(g) \setminus \rep(f)) \cup  A_{\e_0,\e_1} \subset X$$ and so we are done with the proof of Lemma \ref{hard}.
\end{proof}

Now, to finish the proof of Lemma \ref{disjoint} observe that as $X \subset K \setminus N_{\e}(\rep(f) \cup \rep(g))$ and $w_{\epsilon}( K \setminus N_{\e}(\rep(f) \cup \rep(g))) \subset X$, we have that all the periodic points of $w_{\e}$ contained in $ K \setminus N_{\e}(\rep(f) \cup \rep(g))$ are actually contained in $X$, which is a subset  of  $N_{\e}(\atr(f) \cup \atr(g))$. Therefore the periodic points of $w_{\epsilon}$ (namely the set $U_{w_\e} \cup \per(w_\e)$) must be contained in $N_{\e}(\per(f) \cup \per(g))$ as we wanted.
\end{proof}
\end{proof}

As a consequence of Proposition \ref{rf},  for any group $\Gamma \subset V_n$, either there is a finite orbit, a free subgroup, or for every pair of elements $f, g$ in $\Gamma$, we have $U_f \cap U_g \neq \emptyset$. We will generalize Proposition \ref{rf} to an arbitrary number of group elements of $\Gamma$. For any finite set $F \subset \Gamma$, we define the set $K_{F} := \cap_{g \in F} U_g$.

\begin{prop}\label{kf}
Suppose the action of  $\Gamma \subset V_n$ on the Cantor set $K_n$ does not have a finite orbit. Then, for every finite set $F \subset \Gamma$, there exists a finite set $S_{F} \subset K$ such that, for any $\epsilon >0$, there exists an element $h_\e \in \Gamma$ with the following properties:

\begin{enumerate}
\item the set $U_{h_{\e}} \cup \per(h_{\e})$ (the set of periodic points of $h_\e$) is contained in $K_{F} \cup N_{\e}(S_F)$.
\item $h_{\e}$ fixes point-wise $K_F$.

\end{enumerate}

\end{prop}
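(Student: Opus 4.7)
I argue by induction on $|F|$.

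Base case $|F|=1$: write $F=\{f\}$, and pick an integer $M$ divisible by the order of $f|_{U_f}$ and by the least common multiple of the periods of all attracting/repelling periodic orbits of $f$. Then $f^M$ fixes $U_f = K_F$ pointwise, $U_{f^M}=U_f$, and every element of $\per(f)$ becomes a fixed attractor/repeller of $f^M$, so $\per(f^M)=\per(f)$. Setting $h_\e := f^M$ (the same element for every $\e>0$) and $S_F := \per(f)$ satisfies both conclusions.

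For the inductive step, write $F = F' \cup \{g\}$. The inductive hypothesis provides a finite set $S_{F'}\subset K$ and, for every $\delta>0$, an element $h'_\delta \in \Gamma$ fixing $K_{F'}$ pointwise with $U_{h'_\delta} \cup \per(h'_\delta) \subset K_{F'} \cup N_\delta(S_{F'})$. After replacing $g$ and each $h'_\delta$ by suitable powers (exactly as in the base case), we may assume both are the identity on their $U$-sets and that all their attracting and repelling periodic orbits are in fact fixed points. Since $K_F = K_{F'} \cap U_g$, both $g$ and $h'_\delta$ fix $K_F$ pointwise; any composition of their powers then also fixes $K_F$ pointwise, yielding condition (2) automatically.

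I propose to set $S_F := S_{F'} \cup \per(g)$ and, for each $\e>0$, to define $h_\e := g^n \, h'_\delta$, where $\delta$ is first chosen small in terms of $\e$ and then $n$ is chosen large in terms of $\delta$ and $\e$. For condition (1) I adapt the four-case contraction analysis from the proof of Lemma \ref{hard} (the main step of Lemma \ref{disjoint}) to the pair $(h'_\delta, g)$: partition $K$ via $K = U_g \sqcup V_g$ and $K = U_{h'_\delta} \sqcup V_{h'_\delta}$, and track how $h_\e$ acts on each of the four pieces. The goal is to exhibit a forward $h_\e$-invariant clopen set $X \subset N_\e(\atr(g) \cup S_{F'})$ with the property that every point outside $K_F \cup N_\e(\rep(g) \cup S_{F'})$ is eventually mapped into $X$ by iteration of $h_\e$. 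Because $h_\e$ is already the identity on $K_F \subset U_g \cap U_{h'_\delta}$, every periodic point of $h_\e$ then lies in $K_F \cup X \cup N_\e(\rep(g) \cup S_{F'}) \subset K_F \cup N_\e(S_F)$, as required.

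The main obstacle is that, unlike in the hypothesis of Lemma \ref{disjoint}, the sets $U_g$ and $U_{h'_\delta}$ are \emph{not} disjoint: both contain $K_F$ and may overlap further inside $N_\delta(S_{F'})$. The four-case bookkeeping must therefore be adapted so that the intersection $U_g \cap U_{h'_\delta}$ is absorbed into the $K_F$ component of the conclusion (where $h_\e$ is the identity), and any additional overlap lying outside $K_F$ is kept inside $N_\e(S_F)$ by choosing $\delta$ small. The quantitative tuning of $n$ and $\delta$ in terms of $\e$ mirrors the one in the proof of Lemma \ref{disjoint}; the no-finite-orbit hypothesis on $\Gamma$ enters only indirectly, through the inductive construction of $h'_\delta$ for arbitrarily small $\delta$.
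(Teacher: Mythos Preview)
Your inductive framework, choice of $S_F = S_{F'} \cup \per(g)$, and the observation that any word in $g$ and $h'_\delta$ fixes $K_F$ pointwise all match the paper exactly. The difference is in how the inductive step handles the overlap $U_g \cap U_{h'_\delta}$.

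You propose to \emph{adapt} the four-case analysis of Lemma~\ref{hard} to the situation where $U_g$ and $U_{h'_\delta}$ are not disjoint. The paper avoids this entirely with a one-line trick: since both $g$ and $h'_\delta$ (after passing to powers) are the identity on the clopen set $K' := U_g \cap U_{h'_\delta}$, they both act on the complement $C := K \setminus K'$, and \emph{restricted to $C$} their $U$-sets are disjoint by construction. One then invokes Lemma~\ref{disjoint} as a black box on $C$ (cf.\ Remark~\ref{blur}) to produce $h_\e \in \langle g, h'_\delta\rangle$ whose periodic points in $C$ lie in $N_{\e/2}(\per(g)\cup\per(h'_\delta))$. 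On $K'$ the element $h_\e$ is the identity, and $K' \subset (K_{F'}\cap U_g)\cup N_{\e/2}(S_{F'}) = K_F \cup N_{\e/2}(S_{F'})$, so the conclusion follows immediately. This buys you the entire case analysis for free.

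One small caution about your version: you write $h_\e = g^n h'_\delta$ with only $n$ to tune, but the proof of Lemma~\ref{disjoint} produces elements of the form $g^{m_1} f^{m_2}$ where $m_2$ is chosen \emph{after} $m_1$ (so that $f^{m_2}$ contracts into an $\e_1$-neighbourhood determined by $g^{m_1}$). Choosing $\delta$ small controls where $\per(h'_\delta)$ sits, but it does not by itself force $h'_\delta$ to contract strongly enough; you will need a further power $(h'_\delta)^{m}$ with $m$ chosen after $n$. With that correction your route goes through, but the restriction-to-$C$ argument is shorter and sidesteps the bookkeeping entirely.
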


\begin{proof}

The proof is by induction on the size of the set $F$. The proposition is clearly true if the set $F$ contains only one element. Suppose the result is true for a set $F$ and let $F' = F \cup \{g\}$. By induction hypothesis, we can find  a finite set $S_F$ for $F$ with the desired properties. Consider the set $S_{F'} := S_F \cup \per(g)$. Given $\epsilon > 0$ we can find $h_\e$ that fixes $K_F$ whose periodic points are contained in $K_F  \cup N_{\e/2}(S_F)$.  By taking powers of $h_\e$, we can suppose that $h_\e$ fixes point-wise the clopen set $U_{h_\e}$. We can also suppose that $U_g$ is fixed by $g$ by replacing $g$ with a power $g^k$. Observe that both $h_\e$ and $g$ fix the set $K' := U_{h_\e} \cap U_g$, and so we have two elements $g$ and $h_\e$ preserving the clopen set $C := K \setminus K'$.\\ 

We now consider the actions of $g$ and $h_{\e}$ on our new Cantor set $C$. Restricted to $C$, we have $U_{h_\e} \cap U_g = \emptyset$ and so we are in position to apply Proposition \ref{disjoint} (see Remark \ref{blur} below) to find an element $h'_\e$ in the subgroup $ \langle h_\e,g \rangle \subseteq \Gamma$ such that all the periodic points of $h'_\e$ in $C$  are contained in the set  $ N_{\epsilon/2}(\per(h_\e) \cup \per(g))$. As $\per(h_\e) \subset N_{\e/2}(S_F)$, we obtain that $ \per(h'_\e) \subset N_{\e}(S_{F'})$ and that $U_{h'_{\e}}$ is contained in $(U_{h_\e} \cap U_g) \cup N_{\e}(S_{F'})$, which is a subset of $K_{F'} \cup N_{\e}(S_{F'})$ and so we are done.
\end{proof}

\begin{rem}\label{blur}
Even though Proposition \ref{disjoint} is stated for our original Cantor set $K_n$, it works equally well for actions on clopen sets $C \subset K_n$.

\end{rem}

By applying Lemma \ref{period}, we have the following corollary generalizing Proposition \ref{rf}:

\begin{cor} For every subgroup $\Gamma \subset V_n$ one of the following holds:

\begin{enumerate}
\item The action of $\Gamma$ on $K_n$ has a finite orbit.
\item $\Gamma$ contains a free subgroup on two generators.
\item The set $K_{\Gamma} := \cap_{g \in \Gamma} U_g$ is non-empty.
\end{enumerate}
 
\end{cor}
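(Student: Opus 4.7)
The plan is to assume that alternatives (1) and (3) both fail and derive (2). So suppose $\Gamma$ has no finite orbit in $K_n$ and $K_\Gamma = \bigcap_{g \in \Gamma} U_g = \emptyset$; I want to produce a free subgroup $\mathbb{F}_2 \subset \Gamma$. The strategy is to combine the three main tools already established: Proposition \ref{kf} to concentrate the periodic set of some element of $\Gamma$ in a small neighborhood of a finite set, Lemma \ref{period} (the $\ref{m5}$ lemma) to scatter that finite set, and Lemma \ref{pingpong} to build the free subgroup by ping-pong.

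First I would use a compactness argument to reduce the empty infinite intersection to an empty finite one: since each $U_g$ is clopen and $K_n$ is compact, the assumption $K_\Gamma = \emptyset$ implies the existence of a finite subset $F \subset \Gamma$ with $K_F = \bigcap_{g \in F} U_g = \emptyset$. Next, since $\Gamma$ has no finite orbit, Proposition \ref{kf} supplies a finite set $S_F \subset K_n$ with the following property: for every $\epsilon > 0$ there is an element $h_\epsilon \in \Gamma$ whose set of periodic points $U_{h_\epsilon} \cup \per(h_\epsilon)$ is contained in $K_F \cup N_\epsilon(S_F)$; since $K_F = \emptyset$, this simplifies to
\[
U_{h_\epsilon} \cup \per(h_\epsilon) \subset N_\epsilon(S_F).
\]

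Now I would apply Lemma \ref{period} to the finite set $S_F$: since $\Gamma$ has no finite orbit on $K_n$, there exists $g \in \Gamma$ with $g(S_F) \cap S_F = \emptyset$. As $S_F$ is finite and $K_n$ is a Cantor set, by choosing $\epsilon>0$ sufficiently small we can guarantee that $g(N_\epsilon(S_F)) \cap N_\epsilon(S_F) = \emptyset$ as well. For the corresponding $h_\epsilon$, the sets $\per(h_\epsilon) \cup U_{h_\epsilon}$ and $g(\per(h_\epsilon) \cup U_{h_\epsilon})$ are then disjoint, so Lemma \ref{pingpong} applied to the pair $(h_\epsilon, g)$ produces a free subgroup on two generators inside $\Gamma$, completing the proof. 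The argument is essentially mechanical once the earlier propositions are in hand; the only genuine ingredient is the compactness reduction from an arbitrary intersection to a finite one, which is what allows Proposition \ref{kf} (stated only for finite subsets $F$) to capture the hypothesis $K_\Gamma = \emptyset$.
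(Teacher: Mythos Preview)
Your proof is correct and follows essentially the same approach as the paper: assume (1) and (3) fail, use compactness (the finite intersection property) to extract a finite $F\subset\Gamma$ with $K_F=\emptyset$, invoke Proposition~\ref{kf} to get elements $h_\epsilon$ with periodic set inside $N_\epsilon(S_F)$, then use Lemma~\ref{period} to find an element displacing $S_F$ off itself and conclude via Lemma~\ref{pingpong}. The only cosmetic difference is that the paper names the displacing element $t$ rather than $g$.
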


\begin{proof}

Suppose $\Gamma$ does not have a finite orbit and $K_{\Gamma} = \emptyset$. By the finite intersection property for compact sets, there is a finite set $F \subset \Gamma$ such that $K_{F} = \emptyset$.  By Proposition \ref{kf}, we can find a finite set $S_{F} \subset K$ such that, for every $\e > 0$, there is an element $h_{\e}$ in $\Gamma$ whose periodic points are contained in $N_{\e}(S_F)$. By Lemma \ref{period}, we can find an element $t$ sending $S_{F}$ disjoint from itself and therefore also sending $N_{\e}(S_F)$ disjoint from itself for $\e$ small enough, which implies by Lemma \ref{pingpong} that there is a free group on two generators contained in $\Gamma$ and we are done.

\end{proof}

We will finish the proof of Theorem \ref{main} by proving the following lemma. It is important to point out that it is the only place where we use the finite generation condition on $\Gamma$.

\begin{lem} If $\Gamma$ is a finitely generated subgroup of $V_n$ and $K_{\Gamma} := \cap_{g \in \Gamma} U_g \neq \emptyset$, then the action of $\Gamma$ on $K$ has a finite orbit.

\end{lem}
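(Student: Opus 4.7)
The plan is to exploit the fact that every element of $\Gamma$ acts with finite order on the $\Gamma$-invariant set $K_\Gamma$, combined with the rigidity of affine self-maps of elementary intervals in $V_n$.

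First, I would verify that $K_\Gamma$ is $\Gamma$-invariant: for $h \in \Gamma$ one has
$$h(K_\Gamma) = \bigcap_{g \in \Gamma} h(U_g) = \bigcap_{g \in \Gamma} U_{hgh^{-1}} = K_\Gamma,$$
since conjugation permutes $\Gamma$. Because $K_\Gamma \subseteq U_g$ for every $g \in \Gamma$, each restriction $g|_{K_\Gamma}$ has finite order by the definition of $U_g$, so $\Gamma$ acts on $K_\Gamma$ as a torsion group.

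Next, I would produce a uniform depth $D \in \mathbb{N}$ such that, for every generator $g_i$ of $\Gamma$, every depth-$D$ elementary interval $I$ meeting $K_\Gamma$ lies entirely in $U_{g_i}$ and $g_i$ maps it affinely onto another depth-$D$ elementary interval. For each $i$ individually this is possible: since $K_\Gamma \subseteq U_{g_i}$ and $U_{g_i}$ is clopen, compactness of $K_\Gamma$ gives a depth beyond which every elementary interval touching $K_\Gamma$ is contained in $U_{g_i}$; and Lemma \ref{reveal}(1), applied to the revealing pair of $g_i$, says that $g_i$ acts on $U_{g_i}$ by cyclically permuting a finite collection of elementary intervals on which it is affine and size-preserving, so any further refinement is still permuted size-preservingly. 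Taking $D$ to be the maximum of these depths over the finitely many generators finishes the step and is precisely where finite generation enters.

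Let $\mathcal{I}$ be the finite set of depth-$D$ elementary intervals meeting $K_\Gamma$. By composition of affines, every $w \in \Gamma$ is affine on each $I \in \mathcal{I}$ and permutes $\mathcal{I}$, giving a homomorphism $\Gamma \to \mathrm{Perm}(\mathcal{I})$. The crucial rigidity observation is that in $V_n$ the only affine self-map of an elementary interval is the identity: an affine piece of an element of $V_n$ has the form $x \mapsto n^{-k}x + c$, and requiring it to send an elementary interval onto itself forces $k = 0$ and $c = 0$. Therefore, if $w \in \Gamma$ stabilizes some $I \in \mathcal{I}$ setwise, then $w|_I = \Id$. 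Now pick any $x \in K_\Gamma$ and let $I \in \mathcal{I}$ be the depth-$D$ interval containing it. For each $I' \in \Gamma \cdot I$ and any two $g, h \in \Gamma$ with $g(I) = h(I) = I'$, we have $h^{-1}g \in \stab_\Gamma(I)$, so $h^{-1}g(x) = x$ and $g(x) = h(x)$. Thus $\Gamma \cdot x$ contains exactly one point in each element of the orbit $\Gamma \cdot I$, giving $|\Gamma \cdot x| = |\Gamma \cdot I| \leq |\mathcal{I}| < \infty$.

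The main obstacle is the uniform-depth step: combining the per-generator information coming from Lemma \ref{reveal} and the compactness of $K_\Gamma$ into a single depth $D$ working simultaneously for all generators. Once $D$ is in place, the rigidity of affine self-maps in $V_n$ and a short orbit-counting argument deliver the finite orbit with no further work.
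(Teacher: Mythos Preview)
There is a genuine gap in your second step. You assert that Lemma~\ref{reveal}(1) guarantees that $g_i$ acts on $U_{g_i}$ by permuting a finite collection of elementary intervals via \emph{size-preserving} affine maps, so that any uniform refinement to depth $D$ is still permuted. But Lemma~\ref{reveal}(1) says only that some power $g_i^{t}$ restricts to the identity on each periodic interval $K_{\lambda}$; it says nothing about $g_i(K_\lambda)$ having the same depth as $K_\lambda$. In a revealing pair the neutral periodic leaves of a cycle can sit at different depths. Concretely, take $g\in V_2$ given by the revealing pair $A=B$ with leaves $0,10,11$ and $\sigma$ the $3$-cycle $0\to 10\to 11\to 0$. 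Then $g$ has order $3$, $U_g=K$, and $g$ sends the depth-$1$ interval $K_0$ onto the depth-$2$ interval $K_{10}$. For every $D\ge 1$ one has $g(K_{0^{D}})=K_{10^{D}}$, which has depth $D+1$; so $g$ does not permute the depth-$D$ intervals for any $D$. Your set $\mathcal{I}$ therefore fails to be $g_i$-invariant already for a single generator, and the homomorphism $\Gamma\to\mathrm{Perm}(\mathcal{I})$ is not defined.

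This is not easy to patch: the partition of $U_{g_i}$ coming from the revealing pair \emph{is} permuted by $g_i$, but its refinements are not, so there is no evident common finite partition permuted by all generators at once. Note also that your argument would give the stronger conclusion that \emph{every} orbit in $K_\Gamma$ has size at most $|\mathcal{I}|$, whereas the lemma only asks for one finite orbit. The paper's proof avoids all of this: it passes to a minimal closed $\Gamma$-invariant set $\Lambda\subseteq K_\Gamma$ and, assuming $\Lambda$ infinite, uses the finitely many elementary intervals of size between two scales $\epsilon_1<\epsilon_0$ (chosen so that every generator is affine on intervals smaller than $\epsilon_0$) together with density of an orbit in $\Lambda$ to manufacture an element of $\Gamma$ with a genuine attracting fixed point inside $\Lambda$, contradicting $\Lambda\subseteq K_\Gamma\subseteq U_g$.
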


\begin{proof}

Observe that $K_{\Gamma}$ is $\Gamma$-invariant and therefore there is a minimal closed set $\Lambda \subset K_{\Gamma}$ which is invariant under the action of $\Gamma$ on $K$. If  the minimal set $\Lambda$ is a finite set, then $\Gamma$ has a finite orbit and we are done. We will now show that if $\Lambda$ were infinite, then there would exist an element $g$ in $\Gamma$ with an attracting fixed point in $\Lambda$, contradicting that $\Lambda \subset K_{\Gamma} \subset U_g$.\\

Let $S$ be a finite generating set for $\Gamma$. We can take $\e_0>0$ such that for any $g \in S$ and any elementary interval $I$ of size less than $\e_0$, the element $g$ maps affinely $I$ into the elementary interval $g(I)$. We also take $\e_1 < \e_0$, such that for any elementary interval $I$ of size less than $\e_1$ and $g \in S$, we have that $g(I)$ is an elementary interval of size less than $\e_0$.\\

Take $x \in \Lambda$. For every elementary interval $I_n$ of length less than $\e_1$ containing $x$, we will show arguing by contradiction that there exists $g_n \in \Gamma$ such that $ \e_1 \leq |g_n(I_n)| \leq \e_0$ and such that $g_n|_{I_n}$ is affine. Suppose that there is no such $g_n$ in $\Gamma$. In this case, proceeding by induction on the word length of $g \in \Gamma$, we can see that for every $g \in \Gamma$, $g|_{I_n}$ is an affine map and that $|g(I_n)| < \e_1$.\\

As the orbit $\Gamma (x)$ is dense in $\Lambda$ and as $\Lambda$ is infinite, the point $x$ is a non-trivial accumulation point of $\Gamma(x)$. Hence, for every $I_n$ containing $x$, there exists $h_n \in \Gamma$ such that $h_n(x) \in I_n$ and therefore the intersection $h_n(I_n) \cap I_n$ is non-trivial. Furthermore, we can suppose that $h_n(x) \neq x$. Remember that for any pair of elementary intervals $I_n, I_m$, either one is contained in the other one or they are disjoint.\\

Therefore as the elementary intervals $I_n$ and $h_n(I_n)$ intersect, either $h_n(I_n) = I_n$ or one interval is contained strictly in the other one. If $I_n = h_n(I_n)$, then $h_n|_{I_n} = \text{Id}$ because $h_n$ is affine, contradicting that $h_n(x) \neq x$. We can then suppose that $h_n(I_n)$ is contained strictly into $I_n$. As  the map $h_n|_{I_n}$ is affine, $h_n$ is a contraction inside $I_n$ and therefore $h_n$ has exactly one contracting fixed point $y \in I_n$. This implies that $y$ does not belong to $U_{h_n}$, contradicting the fact that $y =  \lim_{l \to \infty} h_n^l(x) \in \Lambda \subset K_\Gamma$.\\

In conclusion, we found a contradiction for the non-existence of $g_n$ and so for every elementary interval $I_n$ of size less than $\e_1$ containing $x$, there exists $g_n \in \Gamma$ such that  $g_n(I_n)$ is an elementary interval, $g_n|_{I_n}$ is affine and $\e_1 \leq |g_n(I_n)| \leq \e_0$. As there is a finite number of elementary intervals satisfying $\e_1 \leq |I| \leq \e_0$,  there exist two intervals $I_m, I_l$, one contained strictly in the other (Let's say $I_l \subset I_m$) such that $g_m(I_m) = g_l(I_l)$. This implies the element $g := g_{l}^{-1}g_{m}$ is affine on $I_m$ and hence $g|_{I_m}$ is a contraction, which implies that there is a unique contracting fixed point $y$ for $g$ in $I_m$. Hence $y \not\in U_g$, but $y = \lim_{n \to \infty} g^n(x)$ and therefore $y$ also belongs to $\Lambda$. This contradicts the inclusion $\Lambda \subset K_{\Gamma} \subset U_g$. Therefore, we obtain a contradiction to the fact that $\Lambda$ is infinite.

\end{proof}

%One can easily modify the proof of the previous Lemma to show there are no Burnside groups inside $V_n$.\\

\end{subsection}

 \end{section}

\end{document}